 \theoremstyle{definition}  
  \newtheorem{defn}{Definition}[section]
  \newtheorem{eg}[defn]{Example}
   \newtheorem{rmk}[defn]{Remark}
  \theoremstyle{plain}  
  \newtheorem{thm}[defn]{Theorem}
  \newtheorem{lem}[defn]{Lemma}
  \newtheorem{prop}[defn]{Proposition}
  \newtheorem{cor}[defn]{Corollary}
  \theoremstyle{remark}
 \renewcommand{\it}[1]{\textit{#1}}
 \renewcommand{\sf}[1]{\textsf{#1}}
 \newcommand{\mbb}[1]{\mathbb{#1}}
 \newcommand{\mcl}[1]{\mathcal{#1}}
 \newcommand{\mbf}[1]{\mathbf{#1}}
 \newcommand{\msc}[1]{\mathscr{#1}}
 \newcommand{\ol}[1]{\overline{#1}}
 \newcommand{\ul}[1]{\underline{#1}}
 \newcommand{\abs}[1]{\left\lvert#1\right\rvert}
 \newcommand{\norm}[1]{\left\lVert#1\right\rVert}
 \newcommand{\M}[1]{\mbb{M}_{#1}}%{\M}[1]{\msc{M}_{#1}} 
 \newcommand{\B}[1]{\msc{B}({#1})}
 \newcommand{\ranko}[2]{|{#1}\rangle\langle{#2}|}
 \newcommand{\ip}[1]{\langle#1\rangle}
 \renewcommand{\ker}[1]{\sf{ker}(#1)}
 \newcommand{\Matrix}[1]{\begin{bmatrix}#1\end{bmatrix}}
 \DeclareMathOperator{\T}{\sf{T}}
 \DeclareMathOperator{\tr}{\sf{tr}}
 \DeclareMathOperator{\id}{\sf{id}}
 \DeclareMathOperator{\lspan}{\sf{span}}
 \numberwithin{equation}{section}%Code for numbering equations sectionwise
 \setlist[enumerate]{font=\upshape,noitemsep, topsep=0pt} % while enumerating the numbering will be in up-shape. Default is italics. Also reduce item seperation space.
 \setlist[itemize]{noitemsep, topsep=0pt}
\title{On regions of mixed unitarity for  semigroups of unital quantum channels}
\author{B V Rajarama Bhat}
\address{Statistics and Mathematics Unit, Indian Statistical Institute, Bangalore}
\email{bhat@isibang.ac.in, bvrajaramabhat@gmail.com}
\author{Repana Devendra}
\address{Department of Mathematics, Indian Institute of Technology, Bombay}
\email{r.deva1992@gmail.com, rdevamaths@gmail.com}
\begin{document}

\date{\today}

\begin{abstract}
It is established that both discrete and continuous semigroups of unital quantum channels are {\em eventually mixed unitary.\/} 
This result is novel even for the subclass of  Schur maps and  stands in sharp contrast to the resolution of the asymptotic quantum Birkhoff conjecture by Haagerup and Musat, who demonstrated that tensor powers of some unital quantum channels maintain a persistent positive distance from the set of  mixed unitary channels. Remarkably, our results show that this gap vanishes in finite time when considering ordinary powers within a semigroup. 

Building on this, we define the mixed unitary index of a unital quantum channel as the minimum time (or power)  beyond which all subsequent maps become mixed unitary. We demonstrate that for any fixed dimension $d \geq 3$, there is no universal upper bound for this index. Furthermore, we  observe that if a continuous semigroup is not mixed unitary at some $t > 0$, it remains non-mixed unitary for all $t$ sufficiently close to the origin. Finally, we investigate quantum dynamical semigroups where mixed unitarity is restricted to specific families, such as Weyl or diagonal unitaries. We show that Schur semigroups of correlation matrices eventually become mixtures of rank-one correlation matrices, and we characterize the generators of Schur semigroups that remain within this set for all $t \geq 0$.
\end{abstract}

\keywords{Completely positive maps, quantum channels, semigroup, mixed unitary channels, Schur channels}

\subjclass[2020]{81P47, 47D06, 47D07, 47L90, 47L05}

\maketitle

\section{Introduction}
In quantum information theory, a quantum channel (or simply a channel) serves as a fundamental mechanism for transmitting quantum information. Formally, it is defined as a trace-preserving, completely positive linear map from one matrix algebra to another.

For a natural number $d$, let $\M{d}$ denote the algebra of all $d\times d$  complex matrices. In this article, we will be restricting ourselves to studying unital quantum channels (doubly stochastic maps) from $\M{d}$ to $\M{d}$. A \it{unitary quantum channel} on $\M{d}$ is a quantum channel which is given by $X\mapsto U^*XU$, for all $X\in\M{d}$ and for some unitary $U\in\M{d}$. Unitary quantum channels are  extreme in the convex set of unital quantum channels on $\M{d}$. A quantum channel is said to be a \it{mixed unitary} if it is in the convex hull of the set of all unitary quantum channels, or equivalently (due to finite dimensionality) it is a convex combination of unitary channels.  That is, a quantum channel $\Phi:\M{d}\to\M{d}$  is  a mixed unitary if there exist  $\{\lambda_j\}_{j=1}^{\ell}\subseteq [0,1]$  and unitaries $\{U_j\}_{j=1}^\ell\subseteq\M{d}$ for some $l\in \mathbb{N}$ such that $\sum_{j=1}^\ell\lambda_j=1$ and
 \begin{align}
     \Phi(X)=\sum_{i=1}^{\ell}\lambda_i U_i^{*}X U_i, ~~~~~~~\forall X\in \M{d}.
 \end{align}
The set of all mixed unitary channels on $\M{d}$ forms a compact convex subset of all unital quantum channels.

Mixed unitary channels play a significant role in quantum information theory for several key reasons. They provide a rich class of examples that capture key properties of general quantum operations \cite{Ros08}. By virtue of their structural simplicity, they remain highly useful for theoretical analysis, often proving more tractable than arbitrary quantum channels. These channels emerge naturally in multiple contexts, including majorization frameworks based on Hermitian operators \cite{AU82}, as well as in quantum cryptographic protocols involving state encryption \cite{AMTB00, HDPW04, AS04}.

Taking inspiration from the classical Birkhoff-von Neumann theorem, which realizes all doubly stochastic maps as convex combinations of permutations, one may suspect that perhaps every unital quantum channel is mixed unitary. A well-known result \cite[Proposition 2.6]{BPS93}, \cite[Theorem 1]{LaSt93} establishes this for $d=2$. However, this property fails for $d\geq 3$ as first demonstrated by Tregub \cite{Tre86} and  K\"ummerer-Maassen \cite{KuMa87}. See also \cite{BPS93, LaSt93,MeWolf09, Wat18}. Additionally, determining whether a given unital channel is mixed unitary or not  is known to be NP-hard \cite{LWat20}.  

Fix a dimension $d\in \mathbb{N}$ and consider the {\em completely depolarizing channel} on $\M{d}:$
\begin{equation}\label{depolarizing}\delta _d(X):=\frac{ \mbox{trace}(X)}{d}I_d, ~~X\in \M{d}.\end{equation}
A  famous theorem of Watrous \cite{Wat09},  says that  there is a ball of positive radius around $\delta _d$ such that all unital channels in it are mixed unitary.  As a consequence, any sequence of unital channels converging to the completely depolarizing channel eventually consists of only mixed unitary channels. This was  taken note of in the context of powers of primitive unital channels in Section 4 of  \cite{HRS20}. They  define {\em mixed unitary time (or index)\/} of a unital quantum channel as  a threshold $k$ such that all subsequent powers beyond which are mixed unitary. Moreover they  show that such a threshold exists for primitive unital channels and  obtain an upper bound for it. 

In a more recent work, Kribs et al., \cite{KJRM23} extend this idea very significantly.  Note that the completely depolarizing channel can be viewed as the unique trace-preserving conditional expectation map from $\M{d}$ to the $C^*$-algebra $\mathbb{C}I_d$ of scalars. Fix a unital $C^*$-subalgebra $\mcl{A}$ of $\M{d}$ and let $\mbf{E}_{\mcl{A}}$ be the unique trace-preserving conditional expectation to it. It is a unital quantum channel. In \cite{KJRM23} it is observed that unital quantum channels sufficiently close to $\mbf{E}_{\mcl{A}}$  and fixing elements of $\mcl{A}$ are mixed unitary. Armed with this result, they show that if $\Phi $ is a unital quantum channel on $\M{d}$, then there exists some $k\in \mathbb{N}$ such that $\Phi ^k$ is mixed unitary \cite[Theorem 22]{KJRM23}. However,  $\Phi^k $ fixes elements of an algebra $\mcl{A}$, does not imply that $\Phi ^{k+1}$ does the same. For this reason, they couldn't show that $\{\Phi ^n\}_{n\geq 0}$ has finite mixed unitary time. We overcome this difficulty in Theorem \ref{thm-even-mu-discrete}.    In \cite{HaaMu11}, Haagerup and Musat showed that in general tensor powers of unital quantum channels can maintain a positive distance from the set of mixed unitary channels. This settled the asymptotic quantum Birkhoff conjecture in the negative. In striking difference to this, here we see that ordinary powers of unital quantum channels become mixed unitary in finite time! This raises the natural question as to for fixed $d$, whether there exists a bound $N_d$ such that
mixed unitary indices of all unital quantum channels on $\M{d}$ are bounded by $N_d$. We answer this question in the negative in Theorem \ref{thm-No-univ-MUindex}. We get it as a consequence of our analysis of continuous (one parameter)  semigroups of unital quantum channels.

%We emphasize  that eventual mixed unitarity of powers of unital  

A deep study of continuous semigroups of mixed unitary channels was carried out long time ago by K\"ummerer and Maassen in \cite{KuMa87}. Their goal was to understand  Markov dilations of such semigroups. In the process they characterized generators of quantum dynamical semigroups which remain mixed unitary for all times. We obtain a different characterization of such generators (Theorem \ref{new characterization}). We show that even in continuous time, semigroups of unital channels eventually become mixed unitary. In other words, they all have finite mixed unitary index (Theorem \ref{thm-semigp-eve-MU}).  We may note that there are no classical analogues for this kind of statements as classically all doubly stochastic matrices are convex combinations of permutation matrices.

As a special case of results above we see that  if $[c_{jk}]\in\M{d}$ is a correlation matrix (positive semidefinite with diagonal entries equal to one), its Schur powers $\{[c_{jk}^n]: n\geq 0\}$  are eventually mixed rank-one matrices, that is, they are convex combinations of complex rank one correlation matrices and similar result holds for continuous Schur semigroups (Theorem \ref{thm-schur-even}).  Furthermore, we provide a complete characterization of the generators $A=[a_{jk}]$, for which the  correlation matrices $[e^{ta_{jk}}]$ are mixed rank-one for all times $t\geq 0$ (See Theorem \ref{thm-schur-gene}). 

We organize the paper as follows: In Section 2, we recall the necessary definitions and basic results required in the subsequent sections, and fix some notations. Section 3, investigates the mixed unitarity for a discrete semigroup $\{\Phi^n\}_{n\in\mbb{N}}$ generated by an unital quantum channel $\Phi$ on $\M{d}$. We first establish  that every trace-preserving $*$-automorphism on a $C^*$-subalgebra $\mcl{A}$ of $\M{d}$ is necessarily of the form $\mathrm{Ad}_U$ for some $U\in\M{d}$ (See Lemma \ref{thm-stru-automor-finite}). This helps us to prove that any unital quantum channel on $\M{d}$ is asymptotically  a conditional $*$-automorphism (Theorem \ref{asymptotic}). 
 Building on this, Theorem \ref{thm-even-mu-discrete} provides a key result of this work: ``Any discrete semigroup $\{\Phi^n\}_{n \in \mathbb{N}}$ of unital channels is eventually mixed unitary.''

%T\ref{thm-even-mu-discrete} leads us to one of the main results, namely every discrete semigroup $\{\Phi^n\}_{n\in\mbb{N}}$ of unital quantum channels on %$\M{d}$ is eventually mixed unitary (Theorem \ref{thm-even-mu-discrete}).

Section 4 extends this analysis to continuous one-parameter semigroups $(\Phi_t)_{t \ge 0}$ of unital quantum channels on $\M{d}$. We establish an analogous result in Theorem \ref{thm-semigp-eve-MU}, proving that these semigroups are also eventually mixed unitary. Furthermore, we demonstrate in Theorem \ref{thm-No-univ-MUindex} that there is no universal upper bound for the mixed unitary index in a fixed dimension $d \ge 3$. That is, no $N \ge 1$ exists such that $\Phi^N$ is a mixed unitary channel for every unital channel $\Phi$ on $\M{d}$.

In Section 5, we examine the local behavior of a continuous semigroup of a unital quantum channels $\{\Phi_t\}_{t\geq 0}$ near origin. First, a characterization theorem is established for such a semigroup on $\M{d}$ to be mixed unitary for all times (Theorem \ref{thm-schonberg-mixed unitary}). Using this Theorem, we prove that every continuous one parameter semigroup $\{\Phi_t\}_{t\geq 0}$ of unital quantum channels on $\M{d}$ is either mixed unitary for all times or there is a time $t_0$ such that $\Phi_t$ is never mixed unitary for all times $t\in(0,t_0)$ (Theorem \ref{thm-semigroup-not mu-intially}).  We end the Section with several examples of unital quantum channels with some explicit computations.

In the last section, we  first discuss mixed Weyl-unitary channels on $\M{d}$. We provide a complete structure theorem (Theorem \ref{thm-mixed-Weyl}) for a one-parameter semigroup $\{\Phi_t\}_{t\geq 0}$ of unital quantum channels on $\M{d}$ to be Weyl covariant. We also introduce the notion of $G$-mixed unitary channel on $\M{d}$ for any subgroup $G$ of $\mbb{U}(d)$ and prove a characterization theorem (See Theorem \ref{thm-semi-G-MU}) for a continuous one parameter semigroup  of unital quantum channels on $\M{d}$ to be $G$-mixed unitary for all times and close with our results on Schur maps.

%We prove that for a \it{correlation matrix} $C=[c_{jk}]_{1\leq j,k\leq d}\in\M{d}$ and a matrix $B=[b_{jk}]_{1\leq j,k\leq d}\in\M{d}$ with vanishing diagonal entries, the associated Schur semigroups $\{[c_{jk}^n]_{1\leq j,k\leq d}:n\in\mbb{N}\}$ and $\{[e^{tb_{jk}}]_{1\leq j,k\leq d}: t\geq 0\}$ are eventually \it{mixed rank-one} matrices (See Theorem \ref{thm-schur-even}).

\section{Background}

\subsection{Notations}
Let $\mbb{C}^d$ denote the $d$-dimensional complex Hilbert space with the inner product  linear in the second variable and conjugate linear in the first. %The set $\{e_i: 1\leq i\leq d\}$ is the standard orthonormal basis of $\mbb{C}^d$.
Let $\M{d_1\times d_2}$ be the algebra of complex matrices of size $d_1\times d_2$, we use $\M{d}$ to refer $\M{d\times d}$. The collection of all positive (semidefinite) matrices  and unitary matrices in $\M{d}$ are denoted by $\M{d}^+$ and $\mbb{U}(d)$ respectively.
For every $x\in\mbb{C}^{d_1}$ and $y\in\mbb{C}^{d_2}$, define the rank one linear map $\ranko{x}{y}:\mbb{C}^{d_2}\to\mbb{C}^{d_1}$ by  $\ranko{x}{y}(z):=\ip{y,z}x$, for all $z\in\mbb{C}^{d_2}$
Additionally, we let $\T=\T_d: \M{d}\to \M{d}$ be the transpose map (with respect to the standard orthonormal  basis
$\{e_i: 1\leq i\leq d\}$).
Let $\ip{X, Y}:=\tr(X^*Y)$ be the Hilbert-Schmidt inner product on $\M{d}$, and $\norm{.}_2$ be the corresponding Hilbert Schmidt norm on $\M{d}$. Then the \it{dual} of a linear map $\Phi:\M{d_1}\to\M{d_2}$ is the unique linear map $\Phi^*:\M{d_2}\to\M{d_1}$  such that $\tr({\Phi(X)}^*Y)=\tr(X^*\Phi^*(Y))$ for all $X\in\M{d_1}$ and $Y\in\M{d_2}$.  Denote the space of all linear maps from $\M{d_1}$ to $\M{d_2}$ by $\B{\M{d_1},\M{d_2}}$.

Trace-preserving completely positive maps are known as quantum channels. Any unital CP map $\Phi$ on $\M{d}$, satisfies the Kadison-Schwarz inequality: $$\Phi(X^*)\Phi(X)\leq \Phi(X^*X), ~\forall X\in\M{d}.$$ 
From this, we see that every unital quantum channel  on $\M{d}$ is contractive with respect to the Hilbert-Schmidt norm. We make repeated use of this fact.
In quantum information theory, the Choi-Jamio\l kowski isomorphism \cite{Cho75, Jam72}, commonly known as channel-state duality, illustrates the connection between quantum channels and quantum states.

\begin{thm}[Choi-Jamio\l kowski \cite{Cho75, Jam72}]\label{thm-C-J-map}
	Let $\msc{J}$ be the linear map from $\B{\M{d_1},\M{d_2}}\to\M{d_1}\otimes\M{d_2}$ defined by 
	$$\Phi\mapsto (\id_{d_1}\otimes \Phi)(\ranko{\Omega_{d_1}}{\Omega_{d_1}}),$$
	where $\Omega_{d_1}=\frac{1}{\sqrt{d_1}}\sum_{j=1}^{d_1}e_j\otimes e_j$.  Then we have the following:
	
	\begin{enumerate}[label=(\roman*)]
		\item $\msc{J}$ is a vector space isomorphism. 
		\item $\Phi$ is Hermitian preserving (i.e., $\Phi(X^*)=\Phi(X)^*,~\forall X\in\M{d}$) if and only if $\msc{J}(\Phi)$ is a Hermitian matrix.
		\item $\Phi$ is positive map if and only if $\msc{J}(\Phi)$ is positive on simple tensor vectors, that is, $\ip{x\otimes y, \msc{J}(\Phi)x\otimes y}\geq 0$ for all $x\in\mbb{C}^{d_1}$ and $y\in\mbb{C}^{d_2}$. 
		\item $\Phi$ is CP if and only if $\msc{J}(\Phi)$ is positive. 
             \item $\Phi$ is unital if and only if $(\tr\otimes\id_{d_2})(\msc{J}(\Phi))=\frac{I_{d_2}}{d_1}$.
             \item $\Phi$ is trace-preserving if and only if $(\id_{d_1}\otimes\tr)(\msc{J}(\Phi))=\frac{I_{d_1}}{d_1}$.
	\end{enumerate}
\end{thm}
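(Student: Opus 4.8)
The whole theorem rests on a single computation: expanding the maximally entangled projection. Since $\Omega_{d_1}=\frac{1}{\sqrt{d_1}}\sum_j e_j\otimes e_j$, one has $\ranko{\Omega_{d_1}}{\Omega_{d_1}}=\frac{1}{d_1}\sum_{i,j}\ranko{e_i}{e_j}\otimes\ranko{e_i}{e_j}$, so writing $E_{ij}:=\ranko{e_i}{e_j}$ for the standard matrix units of $\M{d_1}$ (and of $\M{d_2}$) we obtain the \emph{Choi matrix} formula
\begin{equation*}
\msc{J}(\Phi)=\frac{1}{d_1}\sum_{i,j}E_{ij}\otimes\Phi(E_{ij}).
\end{equation*}
Every assertion will be read off from this identity; the plan is to treat the six items essentially independently once it is in hand.

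For (i), linearity of $\msc{J}$ is immediate, so by the equality of dimensions $\dim\B{\M{d_1},\M{d_2}}=d_1^2 d_2^2=\dim(\M{d_1}\otimes\M{d_2})$ it suffices to prove injectivity. Viewing $\M{d_1}\otimes\M{d_2}$ as a $d_1\times d_1$ array of $d_2\times d_2$ blocks, the formula above shows that the $(i,j)$ block of $\msc{J}(\Phi)$ is exactly $\frac{1}{d_1}\Phi(E_{ij})$; hence $\msc{J}(\Phi)$ determines $\Phi$ on the basis $\{E_{ij}\}$ of $\M{d_1}$, and therefore determines $\Phi$. For (ii), I would compute $\msc{J}(\Phi)^*=\frac{1}{d_1}\sum_{i,j}E_{ji}\otimes\Phi(E_{ij})^*$ and reindex to $\frac{1}{d_1}\sum_{i,j}E_{ij}\otimes\Phi(E_{ji})^*$; comparing blocks, $\msc{J}(\Phi)$ is Hermitian iff $\Phi(E_{ji})^*=\Phi(E_{ij})$ for all $i,j$, i.e. iff $\Phi(E_{ij}^*)=\Phi(E_{ij})^*$, which by linearity is precisely the Hermitian-preserving condition $\Phi(X^*)=\Phi(X)^*$.

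Items (iii), (v), (vi) are direct computations. For (iii) the key identity is
\begin{equation*}
\ip{x\otimes y,\,\msc{J}(\Phi)(x\otimes y)}=\tfrac{1}{d_1}\ip{y,\,\Phi(\ranko{\ol{x}}{\ol{x}})\,y},
\end{equation*}
obtained by substituting the Choi formula and using $\ranko{\ol{x}}{\ol{x}}=\sum_{i,j}\overline{x_i}x_j E_{ij}$ with $x_i=\ip{e_i,x}$. Since $x\mapsto\ranko{\ol{x}}{\ol{x}}$ sweeps out all rank-one positive matrices and every positive matrix is a sum of such, and since $\Phi$ is positive iff it sends every $\ranko{z}{z}$ to a positive matrix, the right-hand side is nonnegative for all $x,y$ exactly when $\Phi$ is positive. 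For (v), applying $\tr\otimes\id_{d_2}$ to the Choi formula gives $\frac{1}{d_1}\sum_{i,j}\delta_{ij}\Phi(E_{ij})=\frac{1}{d_1}\Phi(I_{d_1})$, which equals $\frac{I_{d_2}}{d_1}$ iff $\Phi$ is unital; dually for (vi), applying $\id_{d_1}\otimes\tr$ gives $\frac{1}{d_1}\sum_{i,j}\tr(\Phi(E_{ij}))E_{ij}$, which equals $\frac{I_{d_1}}{d_1}$ iff $\tr(\Phi(E_{ij}))=\delta_{ij}=\tr(E_{ij})$ for all $i,j$, i.e. iff $\Phi$ is trace-preserving.

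The substantive point is (iv), and this is where I expect the real work. The forward direction is painless: if $\Phi$ is CP then $\id_{d_1}\otimes\Phi$ is positive, so it carries the positive matrix $\ranko{\Omega_{d_1}}{\Omega_{d_1}}$ to the positive matrix $\msc{J}(\Phi)$. The converse is the heart of Choi's theorem: starting from $\msc{J}(\Phi)\geq 0$, spectrally decompose it as $\sum_k\ranko{v_k}{v_k}$ with $v_k\in\mbb{C}^{d_1}\otimes\mbb{C}^{d_2}$, write $v_k=\frac{1}{\sqrt{d_1}}\sum_i e_i\otimes(A_k e_i)$ to define operators $A_k\in\M{d_2\times d_1}$, and verify by re-expanding that $\Phi(X)=\sum_k A_k X A_k^*$. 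Any map of this Kraus form is manifestly completely positive, which closes the loop. The only delicate step is the bookkeeping of the reshaping $v_k\mapsto A_k$, so that the reconstructed map reproduces $\Phi$ on each $E_{ij}$; everything else is routine.
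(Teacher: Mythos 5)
Your proof is correct and complete at the appropriate level of detail: the block formula $\msc{J}(\Phi)=\frac{1}{d_1}\sum_{i,j}E_{ij}\otimes\Phi(E_{ij})$ does carry items (i), (ii), (iii), (v), (vi) by direct computation, and your sketch of the converse of (iv) — spectrally decomposing $\msc{J}(\Phi)$ and reshaping eigenvectors into Kraus operators — is exactly Choi's argument. Note that the paper itself does not prove this statement; it is quoted as background with citations to Choi and Jamio\l kowski, so there is no internal proof to compare against, and your argument coincides with the standard one from those references (up to the paper's convention $\mathrm{Ad}_V(X)=V^*XV$, which matches your $\sum_k A_kXA_k^*$ via $V_k=A_k^*$).
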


The matrix $\msc{J}(\Phi )$ is  known as the Choi matrix of $\Phi .$ 

\begin{cor}[Choi-Kraus \cite{Cho75,Kra71}]\label{cor-choi-kraus-rep}
	Let $\Phi:\M{d_1}\to\M{d_2}$ be a linear map. Then $\Phi$ is CP if and only if there exists $\{V_j\}_{j=1}^n\subseteq\M{d_1\times d_2}$ such that  
	\begin{align}\label{eq-Kraus-decompo}
		\Phi=\sum_{j=1}^n\mathrm{Ad}_{V_j},
	\end{align}
	where $\mathrm{Ad}_V(X):=V^*XV$ for all $X\in\M{d_1}$.
\end{cor}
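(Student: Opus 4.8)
The plan is to deduce this directly from the Choi--Jamio\l kowski isomorphism of Theorem \ref{thm-C-J-map}, the point being that the Kraus operators are nothing but a vectorization of the rank-one summands in a spectral decomposition of the Choi matrix.

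The easy direction is the ``if'' part. Assuming $\Phi=\sum_{j=1}^n\mathrm{Ad}_{V_j}$, I would observe that for every $k\geq 1$ one has $\id_k\otimes\mathrm{Ad}_{V_j}=\mathrm{Ad}_{I_k\otimes V_j}$, since $(I_k\otimes V_j)^*=I_k\otimes V_j^*$; as conjugation $Y\mapsto W^*YW$ carries positive matrices to positive matrices, each $\mathrm{Ad}_{V_j}$ is CP, and hence so is their sum.

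For the ``only if'' part, assume $\Phi$ is CP. By Theorem \ref{thm-C-J-map}(iv) its Choi matrix $\msc{J}(\Phi)\in\M{d_1}\otimes\M{d_2}$ is positive, so the spectral theorem lets me write $\msc{J}(\Phi)=\sum_{j=1}^n\ranko{w_j}{w_j}$ for finitely many vectors $w_j\in\mbb{C}^{d_1}\otimes\mbb{C}^{d_2}$, the nonnegative eigenvalues being absorbed into the vectors. The heart of the argument is the identity $\msc{J}(\mathrm{Ad}_V)=\tfrac{1}{d_1}\ranko{w(V)}{w(V)}$, where to a matrix $V=[V_{pq}]\in\M{d_1\times d_2}$ I associate the vector $w(V)=\sum_{p,q}\overline{V_{pq}}\,e_p\otimes e_q$. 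This is a direct computation from the definition of $\Omega_{d_1}$ together with $\mathrm{Ad}_V(\ranko{e_j}{e_k})=\ranko{V^*e_j}{V^*e_k}$ (the last following from the rank-one product rule $A\ranko{x}{y}B=\ranko{Ax}{B^*y}$). Since $V\mapsto w(V)$ is a conjugate-linear bijection from $\M{d_1\times d_2}$ onto $\mbb{C}^{d_1}\otimes\mbb{C}^{d_2}$, for each $j$ I can choose $V_j\in\M{d_1\times d_2}$ with $w(V_j)=\sqrt{d_1}\,w_j$, so that $\msc{J}(\mathrm{Ad}_{V_j})=\ranko{w_j}{w_j}$.

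To conclude, I use that $\msc{J}$ is linear, so $\msc{J}\bigl(\sum_j\mathrm{Ad}_{V_j}\bigr)=\sum_j\msc{J}(\mathrm{Ad}_{V_j})=\sum_j\ranko{w_j}{w_j}=\msc{J}(\Phi)$, and injectivity of $\msc{J}$ (Theorem \ref{thm-C-J-map}(i)) forces $\Phi=\sum_j\mathrm{Ad}_{V_j}$. The only genuinely delicate step is the bookkeeping in the vectorization identity, namely getting the complex conjugation and the normalizing factor $1/d_1$ inherited from $\Omega_{d_1}$ to land in the right places; but this is a routine index computation rather than a conceptual obstacle.
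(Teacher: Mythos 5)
Your proof is correct, and it follows exactly the route the paper intends: the paper states this as a corollary of Theorem \ref{thm-C-J-map} (with no written proof, only citations), and your argument—positivity of the Choi matrix from part (iv), spectral decomposition into rank-one terms, and the vectorization identity $\msc{J}(\mathrm{Ad}_V)=\tfrac{1}{d_1}\ranko{w(V)}{w(V)}$ together with injectivity of $\msc{J}$—is precisely the standard derivation that justifies calling it a corollary. All the bookkeeping (the conjugate in $w(V)$, the factor $\sqrt{d_1}$, and the identity $\id_k\otimes\mathrm{Ad}_{V_j}=\mathrm{Ad}_{I_k\otimes V_j}$ in the easy direction) checks out under the paper's conventions.
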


The representation given by \eqref{eq-Kraus-decompo} is known as the \emph{Kraus decomposition} of the map $\Phi$, and the operators $V_i$'s are called the Kraus operators. The minimal number of Kraus operators required to represent $\Phi$ is called the \emph{Choi-rank} of $\Phi$, and it is equal to the rank of the Choi-matrix $\msc{J}(\Phi)$. Note that the Kraus decomposition of $\Phi$ need not be unique. However, if $\Phi$ is expressed in two different Kraus decomposition $\sum_{j=1}^n\mathrm{Ad}_{V_j}$ and  $\sum_{i=1}^m\mathrm{Ad}_{W_i}$, then $\lspan\{V_j:1\leq j\leq n\}=\lspan\{W_i: 1\leq i\leq m\}$.

\begin{defn}
 An unital quantum channel $\Phi:\M{d} \to \M{d}$  is said to be \it{mixed unitary} if there exist  $\{\lambda_j\}_{j=1}^{\ell}\subseteq [0,1]$ 
 and unitaries $\{U_j\}_{j=1}^\ell\subseteq\mbb{U}(d)$ such that $\sum_{j=1}^\ell\lambda_j=1$ and
 \begin{align*}
     \Phi(X)=\sum_{i=1}^{\ell}\lambda_i U_i^{*}X U_i, ~~~~~~~\forall X\in \M{d}.
 \end{align*}
\end{defn}
The \it{mixed unitary rank} of $\Phi$ is the minimal $\ell$ for which such decomposition exists. It is immediate that, Choi-rank of $\Phi$ is less than or equal to the mixed unitary rank of $\Phi$. In \cite{Bus06}, it is proved that the mixed unitary rank of $\Phi$ is at most the square of the Choi-rank of $\Phi$, which is less than or equal to $d^4$. For more details on mixed unitary rank, see \cite{CKLi22}.

 It is well-known that every unital quantum channel on $\M{2}$ is  mixed unitary and this is not true in general for maps on $\M{d}$ with $d\geq 3$. For ready reference we have the following example.
 
 \begin{eg}\cite[Example 4.3]{Wat18} (Holevo-Werner channel) \label{eg-not-mu}
    Let $\Phi:\M{3} \to \M{3}$ be the  unital quantum channel given by 
% \begin{align*}
 $  \Phi(X):=\frac{1}{2}{(tr(X)I_3-X^T)}, \quad \forall X\in\M{3}.$ 
% \end{align*}
 Then $\Phi$ is not a mixed unitary.
 \end{eg}

Let $\msc{H}(d)$ and $\msc{MU}(d)$ be the set of all Hermitian preserving linear maps on $\M{d}$ and the collection of all mixed unitary channels on $\M{d}$, respectively. The Choi-Jamio\l kowski isomorphism $\msc{J}$ (c.f. \ref{thm-C-J-map}) establishes a bijective correspondence between $\B{\M{d},\M{d}}$  and $\M{d}\otimes\M{d}$. Define an inner product on  $\B{\M{d}}:=\B{\M{d},\M{d}}$ by \begin{equation}\label{inner product}
\ip{\Phi,\Psi} := \tr\big(\msc{J}(\Phi)^*\msc{J}(\Psi)\big). \end{equation} Then, $\msc{J}$ becomes an isometric isomorphism between the Hilbert spaces $\B{\M{d}}$ and $\M{d}\otimes\M{d}$.  Furthermore, we have the following:

       \begin{enumerate}[label=(\roman*)]
          \item The set $\msc{H}(d)$ is isometrically isomorphic to the subset of $\M{d}\otimes\M{d}$ consisting of Hermitian matrices, denoted by $(\M{d}\otimes\M{d})_{Her}$;

          \item The image of $\msc{MU}(d)$ under the Choi-Jamio\l kowski isomorphism, denoted by $\msc{J}(\msc{MU}(d))$, is equal to the convex hull of Choi matrices corresponding to unitary channels. i.e., $\msc{J}(\msc{MU}(d))=\text{co}\{\msc{J}({\mathrm{Ad}_U}): U\in\mbb{U}(d)\}$ and this subset is contained within the set of all positive matrices $\rho\in \M{d}\otimes\M{d}$ such that  $(\id\otimes\tr)(\rho) = (\tr\otimes\id)(\rho) = \frac{1}{d}I_d$.
       \end{enumerate}

Recall that a subset $\mcl{C}$ of a real topological vector space $\mcl{V}$ is a convex cone if it is a convex set and closed under non-negative scalar multiplication. Let $\mcl{V}^{'}$ denote the set of all continuous functionals on $\mcl{V}$. 

\begin{defn}
    Let $\mcl{C}$ be any subset of $\mcl{V}$. The \emph{dual} of $\mcl{C}$, denoted and defined as
    \begin{align}
        \mcl{C}^\circ:=\{f\in\mcl{V}^{'}:f(c)\geq 0, \forall c\in \mcl{C}\}
    \end{align}
\end{defn}
Note that $\mcl{C}^\circ$ is a closed convex cone. Furthermore, if $\mcl{V}$ is the locally convex Hausdorff space, then the double dual of a set $\mcl{C}$ is equal to the closed convex cone generated by $\mcl{C}$.  The following lemma is known in the literature, but we include the details for completeness. 
% We use the fact that the  \emph{depolarizing channel} $\delta_d(.):=\frac{1}{d}\tr(.)I_d$ is in $\msc{MU}(d)$.
\begin{lem}\label{lem-interior-dual}   Consider $\msc{MU}(d)$ as a subset of the real vector space $\msc{H}(d)$. Then the following holds:
    \begin{enumerate}[label=(\roman*)]
        \item  $\msc{MU}(d)$ is a compact convex set and is closed under compositions.
        \item $\msc{MU}(d)^{\circ}=\{\Gamma\in\msc{H}(d): \ip{\Gamma,\mathrm{Ad}_U}\geq 0~~\forall  U\in\mbb{U}(d)\}$.
        % Furthermore, $\msc{MU}(d)^{\circ\circ}=\msc{MU}(d)$.
        \item  Suppose $\Phi\in\msc{H}(d)$ is unital and trace-preserving. Then $\Phi\in\msc{MU}(d)$ if and only if $\ip{\Gamma,\Phi}\geq 0$ for all unital and trace-preserving  $\Gamma\in\msc{MU}(d)^{\circ}$.
        %\item The \emph{depolarizing channel} $\delta_d(.):=\frac{1}{d}\tr(.)I_d$ is in $\msc{MU}(d)$.
         \item The interior of the set $\msc{MU}(d)$ is empty.
         %\textcolor{red}{Here $\msc{MU}(d)$ is considered as subspace of which topological vector space? Nothing is mentioned.}
        % \item $\Phi\in\text{int }\msc{MU}(d)$ if $\ip{\Gamma,\Phi}> 0$ for all unital and trace-preserving  $\Gamma\in\msc{H}(d)$ with $\ip{\Gamma,\mathrm{Ad}_U}\geq 0$.
    \end{enumerate}
    
\end{lem}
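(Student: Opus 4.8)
The four parts are of quite different depth, so I would dispatch the easy ones first and concentrate on (iii).

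For (i): convexity of $\msc{MU}(d)$ is immediate, since a convex combination of convex combinations of unitary channels is again one. Compactness follows from the fact that $\{\mathrm{Ad}_U : U\in\mbb{U}(d)\}$ is the continuous image of the compact group $\mbb{U}(d)$, hence compact, and the convex hull of a compact set in the finite-dimensional space $\msc{H}(d)$ is compact. Closure under composition reduces to the identity $\mathrm{Ad}_U\circ\mathrm{Ad}_V=\mathrm{Ad}_{VU}$, which lets a composition of two mixed unitary channels be rewritten as a single convex combination of unitary channels, with weights the products $\lambda_i\mu_j$. For (ii), I would first note that $\msc{H}(d)$ is a real Hilbert space under \eqref{inner product} (Choi matrices of Hermitian-preserving maps are Hermitian, so the pairing is real-valued), whence by Riesz every continuous functional is $\ip{\Gamma,\cdot}$ for a unique $\Gamma\in\msc{H}(d)$. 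Since $\msc{MU}(d)=\mathrm{co}\{\mathrm{Ad}_U:U\in\mbb{U}(d)\}$ and $\Psi\mapsto\ip{\Gamma,\Psi}$ is linear, the condition $\ip{\Gamma,\Psi}\geq 0$ for all $\Psi\in\msc{MU}(d)$ collapses to $\ip{\Gamma,\mathrm{Ad}_U}\geq 0$ for all $U$, giving the claimed description of $\msc{MU}(d)^{\circ}$.

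The substance is (iii). The forward implication is immediate from the definition of $\msc{MU}(d)^{\circ}$. For the converse I would argue by contraposition: assuming $\Phi\notin\msc{MU}(d)$, I must produce a \emph{unital and trace-preserving} $\Gamma\in\msc{MU}(d)^{\circ}$ with $\ip{\Gamma,\Phi}<0$. Let $\mcl{U}\subseteq\msc{H}(d)$ be the real affine subspace of unital trace-preserving maps, with direction space $\mcl{U}_0=\mcl{U}-\mcl{U}$; both $\Phi$ and $\msc{MU}(d)$ lie in $\mcl{U}$. Since $\msc{MU}(d)$ is compact convex (part (i)) and $\Phi\notin\msc{MU}(d)$, strict separation in the finite-dimensional space $\msc{H}(d)$ yields an affine functional $h$ on $\mcl{U}$ with $h(\Phi)<0\leq h(\Psi)$ for all $\Psi\in\msc{MU}(d)$; taking the $\mcl{U}_0$-projection of the Riesz representative of $h$ gives $\Lambda_0\in\mcl{U}_0$ with $h(\Psi)=\ip{\Lambda_0,\Psi}+h(\delta_d)$ on $\mcl{U}$. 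The decisive computation is that $\msc{J}(\delta_d)=\tfrac{1}{d^2}I$, so $\ip{\delta_d,\Psi}=\tfrac{1}{d^2}\tr\msc{J}(\Psi)=\tfrac{1}{d^2}$ for \emph{every} unital trace-preserving $\Psi$, while $\delta_d\in\msc{MU}(d)$ forces $h(\delta_d)\geq 0$.

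Now I would introduce the one-parameter family $\Gamma_t:=t\Lambda_0+\delta_d$ for $t>0$. Each $\Gamma_t$ is unital and trace-preserving, because $\delta_d\in\mcl{U}$ and $t\Lambda_0\in\mcl{U}_0$, and on $\mcl{U}$ it pairs as $\ip{\Gamma_t,\Psi}=t\ip{\Lambda_0,\Psi}+\tfrac{1}{d^2}=t\bigl(h(\Psi)-h(\delta_d)\bigr)+\tfrac{1}{d^2}$. Setting $\mu:=\min_{U}h(\mathrm{Ad}_U)\geq 0$, the minimum over $U$ of $\ip{\Gamma_t,\mathrm{Ad}_U}$ equals $t(\mu-h(\delta_d))+\tfrac{1}{d^2}$, which starts at $\tfrac{1}{d^2}>0$ and is nonincreasing in $t$; hence the requirement $\ip{\Gamma_t,\mathrm{Ad}_U}\geq 0$ for all $U$ — equivalently $\Gamma_t\in\msc{MU}(d)^{\circ}$ by (ii) — holds for all $t$ up to some threshold $T_1>0$, whereas $\ip{\Gamma_t,\Phi}<0$ holds for all $t$ beyond some threshold $T_2$. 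Comparing the two thresholds shows $T_2<T_1$ precisely because $h(\Phi)<0\leq\mu$, so any $t$ strictly between them furnishes the witness $\Gamma_t$, contradicting the hypothesis. I expect the main obstacle to be exactly this reconciliation: a unital trace-preserving $\Gamma$ is forced to pair with $\delta_d$ to the fixed value $\tfrac{1}{d^2}$, so a raw separating functional cannot in general be represented by such a $\Gamma$; the scaling parameter $t$ (i.e.\ rescaling the supporting functional so that its additive constant matches $\tfrac{1}{d^2}$) is what simultaneously lands $\Gamma_t$ in the dual cone and keeps it unital and trace-preserving, and it is the membership $\delta_d\in\msc{MU}(d)$ that makes the two constraints compatible. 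Finally, (iv) is soft: $\msc{MU}(d)\subseteq\mcl{U}$, and $\mcl{U}$ is a \emph{proper} affine subspace of $\msc{H}(d)$ — already the unitality constraint $\Phi(I)=I$ cuts out positive codimension, since $\Phi\mapsto\Phi(I)$ maps $\msc{H}(d)$ onto the Hermitian matrices (e.g.\ $X\mapsto\tr(X)H/d$ sends $I\mapsto H$). A subset of a proper affine subspace has empty interior in the ambient space, so $\msc{MU}(d)$ has empty interior in $\msc{H}(d)$.
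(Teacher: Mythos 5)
Your proposal is correct and follows essentially the same route as the paper's: parts (i), (ii), (iv) are handled identically, and for the converse of (iii) both arguments strictly separate $\Phi$ from the compact convex set $\msc{MU}(d)$ via Hahn--Banach and then repair the separating functional using the depolarizing channel --- exploiting that $\ip{\delta_d,\Psi}=\tfrac{1}{d^2}$ for every unital trace-preserving $\Psi$ --- so that it becomes unital, trace-preserving, lands in $\msc{MU}(d)^{\circ}$, and still pairs negatively with $\Phi$. The only difference is bookkeeping: the paper fixes the scale once by normalizing the separation as $\tr(YZ)\leq 1<\tr\bigl(Y\msc{J}(\Phi-\delta_d)\bigr)$ and taking the single witness $\Gamma=\msc{J}^{-1}\bigl(\tfrac{1}{d^2}(I_{d^2}-Y)\bigr)$, whereas you keep a free parameter in $\Gamma_t=t\Lambda_0+\delta_d$ and choose $t$ between two thresholds (your side remark that the minimum of $\ip{\Gamma_t,\mathrm{Ad}_U}$ is nonincreasing in $t$ tacitly uses $\mu\leq h(\delta_d)$, which follows from $\delta_d$ being mixed unitary, but your argument never actually needs it since $\mu>h(\delta_d)$ would only enlarge the admissible window).
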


\begin{proof}
       $(i)$  Since the set of unitary channels is compact and $\msc{MU}(d)$ is its convex hull it is also compact. Clearly it is closed under compositions.\\

$(ii)$  Note that, any linear functional $f$ on $\msc{H}(d)$ is of the form $f(\Phi)=\tr\big(\msc{J}(\Gamma)\msc{J}(\Phi)\big)=\ip{\Gamma,\Phi}$, for some $\Gamma\in\msc{H}(d)$. Therefore, from the definition of the dual cone,
% any linear functional on $(\M{d}\otimes\M{d})_{Her}$ is 
%         of the form $Z\mapsto \tr(ZY)$ for some $Y\in(\M{d}\otimes\M{d})_{Her}$. Since $\msc{J}(\msc{MU}(d))\subseteq(\M{d}\otimes\M{d})_{Her}$, any linear functional on $\msc{H}(d)$ then $f\circ\msc{J}^{-1}$ is a linear functional on $(\M{d}\otimes\M{d})_{Her}$ and hence there existsa $Y\in(\M{d}\otimes\M{d})_{Her}$ such that $f\circ\msc{J}^{-1}(Z)=\tr(YZ)$ for all $Z\in(\M{d}\otimes\M{d})_{Her}$ equivalently, $f(\Phi)=\tr(C_{\Psi}C_{\Phi})=\ip{\Psi,\Phi}$ for all $\Phi\in\msc{H}(d)$, where $C_{\Psi}=Y$. Now, from the definition of the dual cone, we have
      \begin{align*}
          \msc{MU}(d)^{\circ}&=\{f\in(\msc{H}(d))^{'}: f(\Phi)\geq 0, ~~\forall \Phi\in\msc{MU}(d)\}\\
                             &=\{\Gamma\in\msc{H}(d):\ip{\Gamma,\Phi}\geq 0, ~~\forall \Phi\in\msc{MU}(d)\}\\
                             &=\{\Gamma\in\msc{H}(d):\ip{\Gamma,\mathrm{Ad}_U}\geq 0, ~~\forall U\in\mbb{U}(d)\}.
      \end{align*}
% Further more, since $\msc{MU}(d)$ is a closed convex set, we have $\msc{MU}(d)^{\circ\circ}=\msc{MU}(d)$.
\\
$(iii)$  Assume that $\Phi$ is mixed unitary, that is $\Phi=\sum_{j=1}^\ell\lambda_j\mathrm{Ad}_{U_j}$ for some $U_j\in\mbb{U}(d)$ and $\lambda_j\in[0,1]$, then for any unital, trace-preserving $\Gamma\in\msc{MU}(d)^\circ$, we have 
\begin{align*}
    \ip{\Gamma,\Phi}&=\ip{\Gamma,\sum_{j=1}^\ell\lambda_j\mathrm{Ad}_{U_j}}=\sum_{j=1}^\ell\lambda_j\ip{\Gamma,\mathrm{Ad}_{U_j}}\geq 0.
\end{align*}
Conversely, assume that $\ip{\Gamma,\Phi}\geq 0$ for all unital and trace-preserving  $\Gamma\in\msc{MU}(d)^\circ$. Let $\delta_d$ be the depolarizing channel as in  \eqref{depolarizing}, and consider the sets,
\begin{align*}
    \mcl{M}:&=\{\Psi- 
       \delta_d:\Psi\in\msc{MU}(d)\},\\
   \mcl{N}:&=\{\Psi- 
       \delta_d:\Psi\in\msc{H}(d)  \text{ is  unital and trace-preserving} \}.
\end{align*}
Since $\msc{MU}(d)$ is a closed convex set, we have $\mcl{M}$ is a closed convex subset of a real vector space $\mcl{N}$. 
Suppose $\Phi\notin\msc{MU}(d)$, then  $\msc{J}(\Phi-\delta_d)\in \msc{J}(\mcl{N})\setminus \msc{J}(\mcl{M})$. Note that  any linear functional on $\msc{J}(\mcl{N})$ is of the form $Z\mapsto \tr(YZ)$ for some $Y\in\msc{J}(\mcl{N})$. Therefore, by the Hahn-Banach separation theorem  
      % \footnote{Theorem \cite[Theorem 1. C]{Ze12}: Let $M$ be a closed non-empty convex subset of a normed linear space $\mcl{X}$ over $\mbb{R}$. Let $u_0\in\mcl{X}$ but not in $M$, then there exists a continuous linear function $f$ on $\mcl{X}$ such that $f(u)\leq 1 < f(u_0)$ for all $u\in M$.}
there exist $Y\in\msc{J}(\mcl{N})$ such that 
\begin{align*}
    \tr(YZ)\leq 1 < \tr(Y\msc{J}(\Phi-\delta_d)),~~~~~\forall Z\in \msc{J}(\mcl{M}).
\end{align*}
 Take $\Gamma:=\msc{J}^{-1}(W)$, where $W=\frac{1}{d^2}(I_{d^2}-Y)$. Then $\Gamma\in\msc{H}(d)$. Now as $(\id\otimes\tr)(W)=\frac{I_d}{d}=(\tr\otimes\id)(W)$, we have $\Gamma$ is  unital and trace-preserving. Furthermore, for every unitary $U\in\mbb{U}(d)$,
      \begin{align*}
            \ip{\Gamma,\mathrm{Ad}_U}=\tr(\msc{J}(\Gamma)\msc{J}(\mathrm{Ad}_U))&=\tr(W\msc{J}(\mathrm{Ad}_U))\\
                                                                      &=\frac{1}{d^2}\{\tr(\msc{J}(\mathrm{Ad}_U))-\tr(Y\msc{J}(\mathrm{Ad}_U))\}\\
                                                                      &=\frac{1}{d^2}\{1-\tr(Y\msc{J}(\mathrm{Ad}_U))+\frac{1}{d^2}\tr(Y)\} \qquad      (\because \tr(Y)=0,& \tr(\msc{J}(\mathrm{Ad}_U))=1)\\
                                                                     &=\frac{1}{d^2}\{1-\tr\left(Y(\msc{J}(\mathrm{Ad}_U)-\frac{1}{d^2}I_{d^2})\right)\}\\
                                                                     &=\frac{1}{d^2}\{1-\tr\left(Y\big(\msc{J}(\mathrm{Ad}_U-\delta_d)\big)\right)\}\\
                                                                     &\geq0
        \end{align*}
        This shows that $\Gamma\in\msc{MU}(d)^\circ$. But,
      \begin{align*}
          \ip{\Gamma,\Phi}=\tr(\msc{J}(\Gamma)\msc{J}(\Phi))=\tr(W\msc{J}(\Phi))&=\frac{1}{d^2}\{\tr(\msc{J}(\Phi))-\tr(Y\msc{J}(\Phi))\}\\
                                                 &=\frac{1}{d^2}\{1-\tr(Y\msc{J}(\Phi))+\frac{1}{d^2}\tr(Y)\}\\
                                                 &=\frac{1}{d^2}(1-\tr(Y\msc{J}(\Phi-\delta_d)))<0. 
      \end{align*}
          This contradicts our assumption.\\    
$(iv)$ Suppose the interior of $\msc{MU}(d)$ is non-empty. i.e.,  there exists a $\Phi_0\in\msc{MU}(d)$ and $r>0$ such that $B(\Phi_0,r):=\{\Psi\in\msc{H}(d):\norm{\Psi- 
        \Phi_0}_2<r\}$\footnote{Since any two norms are equivalent in finite dimensional space, so we can choose the ball in Hilbert-Schmidt norm.} is a subset of $\msc{MU}(d)$. Here $\norm{\Psi}_2:=\tr\big(\msc{J}(\Psi)^*\msc{J}(\Psi)\big)$. Observe that  $\msc{J}(B(\Phi_0,r)-\delta_d)\subseteq \mcl{X}$, where $\mcl{X}=\{\rho\in (\M{d}\otimes\M{d})_{Her}: \id\otimes\tr(\rho)=\tr\otimes \id (\rho)=0\}$  and $\delta_d$ be the depolarizing channel as in \eqref{depolarizing}. Since $\msc{J}$ is an isometry and the translation of an open set is open, we deduce that $\mcl{X}$ contains an open set. Therefore, $\mcl{X}=(\M{d}\otimes\M{d})_{Her}$, which is not possible.  
\end{proof}
 
\section{Discrete semigroups  of mixed unitary channels}

Example \ref{eg-not-mu} shows that for $d \geq 3$, not every unital quantum channel $\Phi$ on $\M{d}$ is a mixed unitary. However,  Theorem 21 of \cite{KJRM23} shows that if
 $\Phi:\M{d}\to\M{d}$ is a unital quantum channel,  then there exists some $k\in \mathbb{N}$ such that $\Phi^k$ is a mixed unitary channel. This suggests the following definition. 
\begin{defn}
Let $\{\Phi _k\}_{k\in \mathbb{N}}$ be a sequence of CP maps on $\M{d}.$ (i) It is said to be {\em frequently mixed unitary\/} if it has a subsequence of mixed unitary channels. (ii) It is said to be {\em eventually
 mixed unitary\/} if there exists $k\in \mathbb{N}$ such that $\Phi _n$ is mixed unitary for $n\geq k.$
\end{defn}

Clearly, if $\Phi ^k$ is a mixed unitary so is $\Phi ^{mk}$ for all $m\geq 1.$ Hence the main result of \cite{KJRM23} is telling us that for any unital quantum channel $\Phi$, $\{\Phi ^n\}_{n\in \mathbb{N}}$ is frequently mixed unitary. In this article (See Theorem \ref{thm-even-mu-discrete}), we make the stronger assertion that it is eventually mixed unitary. This was known in some special cases, such as \it{primitive} (meaning that it is \it{irreducible} and has a trivial \it{peripheral spectrum}) channels (\cite{HRS20, KJRM23}) and here we have it in general. The main tool we need is a result from \cite{KJRM23} which says that quantum channels fixing elements of a subalgebra $\mcl{A}$ and sufficiently close to the conditional expectation map on to $\mcl{A}$  are mixed unitary.  We begin with relevant definitions and results.

\begin{defn}
    Let $\mcl{A}$ be a unital $C^*$-subalgebra of $\M{d}$. A CP map $\mbf{E} :\M{d}\to  \M{d}$ with $\mbf{E}(\M {d})=\mcl{A}$ is said to be a conditional expectation onto $\mcl{A}$, if $\mbf{E} (A)=A, \forall A\in \mathcal{A}$.
\end{defn}    
Note that a conditional expectation map $\mbf{E}$ as above is of norm 1 as it is a unital CP map and it is also a projection ($\mbf{E}^2=\mbf{E}$) as it is the identity map on its range. It is also known that $\mbf{E}(AXB)=A\mbf{E}(X)B$ for all $A,B\in \mcl{A}$ and $X\in \M{d}.$ See \cite{Ume53, Ra06}.

 While a conditional expectation onto a $C^*$-subalgebra $\mcl{A}$ of $\M{d}$ is not necessarily unique, there exists a unique trace-preserving conditional expectation. 
Denote it by $\mbf{E}_{\mcl{A}}$. It is equal to the orthogonal projection of $\M{d}$ onto $\mcl{A}$ with respect to  the Hilbert-Schmidt inner product on $\M{d}$. In \cite{BhBha25} one can see a concrete way of writing $\mbf{E}_{\mcl{A}}$ as a mixed unitary channel. 
 One of the fundamental quantum channels widely discussed in the literature is the \it{completely depolarizing channel} $\delta_d$ as defined in
 (\ref{depolarizing}).
% \begin{align}\label{eq-comple-depol-chan}
% \delta_d(X)=\frac{1}{d}\tr(X)I_d, ~~~~\forall X\in\M{d}.
% \end{align}
% 
Note that $\delta _d$ is the unique trace-preserving conditional expectation onto the algebra $\mbb{C}I_d$. The following Proposition is due to John Watrous \cite{Wat09}. It says that all unital quantum channels sufficiently close to the depolarizing channel are mixed unitary.

\begin{prop} \cite[Corollary 2]{Wat09} \label{lem-dual-MU}  Fix $d\in \mathbb{N}.$
 There exists  $r>0$ depending on $d$  such that, every unital, quantum channel $\Phi$ on $\M{d}$ with $\norm{\Phi-\delta_d}<r$, is mixed unitary. 
 \end{prop}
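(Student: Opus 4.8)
The plan is to show that $\delta_d$ lies in the interior of $\msc{MU}(d)$ relative to the affine space $\mcl{U}$ of unital, trace-preserving, Hermitian-preserving maps, and to extract a quantitative radius from a compactness argument inside the dual cone supplied by Lemma \ref{lem-interior-dual}. First I would record two computations in the Choi picture: $\msc{J}(\delta_d)=\tfrac{1}{d^2}I_{d^2}$, and $\tr(\msc{J}(\Gamma))=1$ for every trace-preserving $\Gamma$ (take the trace of the identity in Theorem \ref{thm-C-J-map}$(vi)$). Together these give $\ip{\Gamma,\delta_d}=\tfrac{1}{d^2}\tr(\msc{J}(\Gamma))=\tfrac{1}{d^2}$ for all unital, trace-preserving $\Gamma$, so that for any unital, trace-preserving $\Phi$ one has $\ip{\Gamma,\Phi}=\tfrac{1}{d^2}+\ip{\Gamma-\delta_d,\Phi-\delta_d}$ (the cross terms cancel because $\delta_d$ is orthogonal to the direction space of $\mcl{U}$). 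By Lemma \ref{lem-interior-dual}$(iii)$ it then suffices to prove that $\ip{\Gamma,\Phi}\ge 0$ for every unital, trace-preserving $\Gamma\in\msc{MU}(d)^{\circ}$ whenever $\Phi$ is a unital channel near $\delta_d$; by Cauchy--Schwarz this follows once I bound $\norm{\Gamma-\delta_d}$ uniformly over the set $S:=\{\Gamma\in\msc{MU}(d)^{\circ}:\Gamma\text{ unital and trace-preserving}\}$.

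Second, I would prove that $S$ is compact. It is closed, convex, and contains $\delta_d$, so in finite dimension it is bounded iff its recession cone is trivial, and that cone is $V_0\cap\msc{MU}(d)^{\circ}$, where $V_0$ is the real space of Hermitian-preserving maps $\Xi$ with $\Xi(I_d)=0$ and $\tr\circ\Xi=0$. Using the twirl identity $\int_{\mbb{U}(d)}\mathrm{Ad}_U\,dU=\delta_d$ (Schur averaging), any $\Xi$ in this cone satisfies $\ip{\Xi,\mathrm{Ad}_U}\ge 0$ for all $U$, while $\int_{\mbb{U}(d)}\ip{\Xi,\mathrm{Ad}_U}\,dU=\ip{\Xi,\delta_d}=\tfrac{1}{d^2}\tr(\msc{J}(\Xi))=0$ (as $\Xi$ is trace-annihilating); a continuous nonnegative integrand with zero average vanishes identically. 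Thus compactness of $S$ reduces to the key claim: the only $\Xi\in V_0$ with $\ip{\Xi,\mathrm{Ad}_U}=0$ for all $U\in\mbb{U}(d)$ is $\Xi=0$ (equivalently, the unitary channels affinely span $\mcl{U}$).

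The key claim is where the real work lies, and I would settle it by representation theory. The map $U\mapsto\mathrm{Ad}_U$ is a unitary representation of $\mbb{U}(d)$ on $\M{d}$ with the Hilbert--Schmidt inner product, and under the orthogonal splitting $\M{d}=\mbb{C}I_d\oplus\mcl{T}$ into scalars and traceless matrices $\mcl{T}$ it is the trivial representation on $\mbb{C}I_d$ and the adjoint representation on $\mcl{T}$, the latter being irreducible for every $d\ge 2$ (the case $d=1$ is trivial). For fixed $\Xi$ the function $U\mapsto\ip{\Xi,\mathrm{Ad}_U}$ is a matrix coefficient of this representation, so by Schur orthogonality it vanishes identically exactly when the $\mcl{T}\to\mcl{T}$ block of $\Xi$ is zero (together with a scalar condition on the $\mbb{C}I_d\to\mbb{C}I_d$ block). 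But the conditions $\Xi(I_d)=0$ and $\tr\circ\Xi=0$ already force the three other blocks of $\Xi$ to vanish, leaving only the $\mcl{T}\to\mcl{T}$ part; hence that part is all of $\Xi$, and it is zero, so $\Xi=0$. I expect this to be the main obstacle: the crux is the irreducibility of the adjoint action, together with the block bookkeeping identifying $V_0$ with a single isotypic component.

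Finally, with $S$ compact I would put $M:=\sup_{\Gamma\in S}\norm{\Gamma-\delta_d}<\infty$ and set $r:=1/(Md^2)$. For any unital channel $\Phi$ with $\norm{\Phi-\delta_d}<r$ and any $\Gamma\in S$, Cauchy--Schwarz gives $\ip{\Gamma,\Phi}\ge\tfrac{1}{d^2}-M\norm{\Phi-\delta_d}>0$, whence $\Phi\in\msc{MU}(d)$ by Lemma \ref{lem-interior-dual}$(iii)$. Passing between the operator norm of the statement and the Hilbert--Schmidt norm used here is harmless, since all norms on the finite-dimensional space $\msc{H}(d)$ are equivalent. As a consistency check against Lemma \ref{lem-interior-dual}$(iv)$, no genuine enlargement of the domain occurs: because $\msc{J}(\delta_d)=\tfrac{1}{d^2}I_{d^2}\succ 0$, every unital, trace-preserving, Hermitian-preserving map sufficiently close to $\delta_d$ is automatically completely positive, so the ball really does consist of channels.
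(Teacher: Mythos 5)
Your proposal is correct, and it is necessarily a different route from the paper's, because the paper does not prove this Proposition at all: it is quoted from Watrous \cite{Wat09}, and the surrounding text only uses it as a black box. Your argument is self-contained modulo Lemma \ref{lem-interior-dual}, whose proof (Hahn--Banach separation) is given earlier in the paper and does not rely on the Proposition, so there is no circularity. The chain of steps checks out: the identities $\ip{\Gamma,\delta_d}=\ip{\delta_d,\Phi}=\tfrac{1}{d^2}$ for unital trace-preserving Hermitian-preserving maps, the identification of the recession cone of $S=\{\Gamma\in\msc{MU}(d)^{\circ}:\Gamma \text{ unital, trace-preserving}\}$ with $V_0\cap\msc{MU}(d)^{\circ}$, the Haar-averaging step $\int_{\mbb{U}(d)}\mathrm{Ad}_U\,dU=\delta_d$ forcing $\ip{\Xi,\mathrm{Ad}_U}\equiv 0$ on that cone, and the representation-theoretic kill shot: since $\mathrm{Ad}$ splits as trivial $\oplus$ adjoint on $\M{d}=\mbb{C}I_d\oplus\{\text{traceless}\}$ with the adjoint part irreducible for $d\geq 2$, the only $\Xi$ with $\Xi(I_d)=0$, $\tr\circ\Xi=0$ and vanishing coefficients against all $\mathrm{Ad}_U$ is $\Xi=0$. (Equivalently, your key claim is the double-commutant fact that $\lspan\{\mathrm{Ad}_U:U\in\mbb{U}(d)\}$ is exactly the set of maps block-diagonal with respect to this splitting; either phrasing works.) Two cosmetic points: one should note $M>0$ before setting $r=1/(Md^2)$, which holds for $d\geq 2$ since $\id=\mathrm{Ad}_{I_d}\in S$ and $\id\neq\delta_d$, with $d=1$ trivial; and the conclusion $\ip{\Gamma,\Phi}>0$ is of course stronger than the $\geq 0$ that Lemma \ref{lem-interior-dual}(iii) needs. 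The trade-off against Watrous's original proof is worth recording: his argument is constructive, showing that mixing an arbitrary unital channel with an explicit, sufficiently large weight of $\delta_d$ yields a mixed unitary channel, hence an explicit radius $r(d)$; your compactness argument produces $r$ non-constructively, since $M=\sup_{\Gamma\in S}\norm{\Gamma-\delta_d}$ is not computed. Explicit radii are what quantitative consequences (such as the bound on the mixed unitary index of primitive channels in \cite{HRS20}) actually use; for this paper, where only the existence of $r$ enters (Theorems \ref{thm-even-mu-discrete} and \ref{thm-semigp-eve-MU}), your version fully suffices.
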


 In 2024, David Kribs et al. \cite{KJRM23} found a clever way of extending this result with the depolarising channel replaced by conditional expectation maps on to arbitrary unital $C^*$-subalgebras of $\M{d}$. For a given a unital $C^*$-subalgebra $\mcl{A}$ of $\M{d}$, we let $\mcl{F}(\mcl{A})$ to be the set of all unital quantum channels which leave elements of $\mcl{A}$ fixed. That is,
 \begin{equation*}
     \mcl{F}(\mcl{A}):=\{\Phi:\M{d}\to\M{d}: \Phi \mbox{ is unital quantum channel such that } \Phi(A)=A,~~\forall A\in \mcl{A}\}.
 \end{equation*}
 It is easy to see that $\mcl{F}(\mcl{A})$ is closed under the compositions. Furthermore, it is closed under adjoints also. This follows from the fact that the fixed point space of $\Phi$ is equal to the commutant of the set $\{V_j:1\leq j\leq \ell\}$, where $\{V_j\}_{j=1}^\ell$ is any set of Kraus operators of $\Phi$ (See \cite[Theorem 2.1]{Kri03}).

The following result has been observed and used in \cite{KJRM23}. However, it is not explicitly stated or proved.  The result is essential for us and so we provide the details. 

\begin{thm}\cite[Theorem 4, Lemma 12]{KJRM23}\label{thm-cond-exp-mu}
    Let $\mcl{A}$ be a unital $C^*$-subalgebra of $\M{d}$ and let $\mbf{E}_{\mcl{A}}$ be the unique trace-preserving conditional expectation  onto $\mcl{A}$. Then there exists $r_{\mcl{A}}>0$ such that, if $\Phi\in\mcl{F}(\mcl{A})$ with $\norm{\Phi-\mbf{E}_{\mcl{A}}}\leq r_{\mcl{A}}$, then $\Phi$ is mixed unitary. 
\end{thm}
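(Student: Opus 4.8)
The plan is to exhibit $\mbf{E}_{\mcl{A}}$ as a relative interior point of a compact convex set of mixed unitary channels, so that an entire neighbourhood of it inside $\mcl{F}(\mcl{A})$ remains mixed unitary. This is the natural upgrade of Watrous' result (Proposition \ref{lem-dual-MU}), which is exactly the case $\mcl{A}=\mbb{C}I_d$.

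First I would normalize the algebra. Up to a unitary conjugation (which preserves both mixed unitarity and the hypotheses), $\mcl{A}=\bigoplus_{k=1}^m \M{n_k}\otimes I_{p_k}$ acting on $\mbb{C}^d=\bigoplus_k \mbb{C}^{n_k}\otimes\mbb{C}^{p_k}$, with commutant $\mcl{A}'=\bigoplus_k I_{n_k}\otimes\M{p_k}$. The key structural input is that every $\Phi\in\mcl{F}(\mcl{A})$ admits Kraus operators lying in $\mcl{A}'$: since $\mcl{A}\subseteq\mathrm{Fix}(\Phi)$ and $\mathrm{Fix}(\Phi)$ equals the commutant of any Kraus set $\{V_j\}$ by \cite[Theorem 2.1]{Kri03}, one gets $V_j\in\{V_j,V_j^{*}\}''\subseteq\mcl{A}'$. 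Consequently $\Phi$ is block graded and acts on the $(k,l)$ block by $\mathrm{id}\otimes\Psi_{kl}$ on the multiplicity index; in particular every $\mathrm{Ad}_U$ with $U\in\mcl{U}(\mcl{A}')$, the unitary group of $\mcl{A}'$, lies in $\mcl{F}(\mcl{A})$. Set $\mcl{MU}_{\mcl{A}'}:=\mathrm{co}\{\mathrm{Ad}_U: U\in\mcl{U}(\mcl{A}')\}$, a compact convex subset of $\msc{MU}(d)$.

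Next I would identify $\mbf{E}_{\mcl{A}}$ with the Haar twirl over $\mcl{U}(\mcl{A}')$: a direct block computation (using $\int_{\mbb{U}(p)}u\,du=0$ off the diagonal and $\int_{\mbb{U}(p)}\mathrm{Ad}_u\,du=\delta_p$ on the diagonal) gives $\mbf{E}_{\mcl{A}}=\int_{\mcl{U}(\mcl{A}')}\mathrm{Ad}_U\,dU$. Thus $\mbf{E}_{\mcl{A}}$ is the barycenter of the push-forward of normalized Haar measure under the continuous map $U\mapsto\mathrm{Ad}_U$, whose support is all of $S:=\{\mathrm{Ad}_U:U\in\mcl{U}(\mcl{A}')\}$. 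A standard separation argument then shows that the barycenter of a full-support measure lies in the relative interior of $\mathrm{co}(S)$ relative to $\mathrm{aff}(S)$: any proper supporting functional $f$ with $f\le c$ on $S$ and $f(\mbf{E}_{\mcl{A}})=c$ would force $f\equiv c$ on $S$, hence on $\mathrm{aff}(S)$, a contradiction. Therefore $\mbf{E}_{\mcl{A}}$ lies in the relative interior of $\mcl{MU}_{\mcl{A}'}$ relative to $\mathrm{aff}(S)$.

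The remaining, and I expect hardest, step is to show that this affine hull is large enough, namely $\mathrm{aff}(S)=\mathrm{aff}\,\mcl{F}(\mcl{A})$. The inclusion $\subseteq$ is immediate. For $\supseteq$ I would compute both sides representation-theoretically: $\mathrm{span}_{\mbb{C}}\{\mathrm{Ad}_U:U\in\mcl{U}(\mcl{A}')\}=\pi(\mcl{U}(\mcl{A}'))''$ for the conjugation representation $\pi(U)=\mathrm{Ad}_U$ on $\M{d}$, which decomposes into the trivial representation together with the adjoint representations $\mathrm{adj}_k$ (dimension $p_k^2-1$) and the outer tensors $\overline{V_k}\boxtimes V_l$ (dimension $p_kp_l$, $k\ne l$); each distinct irreducible occurs once, so $\dim_{\mbb{C}}\pi''=1+\sum_k(p_k^2-1)^2+\sum_{k\ne l}(p_kp_l)^2$. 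On the other side, parametrizing $\mcl{F}(\mcl{A})$ by its block data $\{\Psi_{kl}\}$ (with $\Psi_{kk}$ unital and trace preserving, and $\Psi_{lk}$ determined from $\Psi_{kl}$ by Hermitian preservation), the affine dimension of unital trace-preserving maps on $\M{p_k}$ is exactly $(p_k^2-1)^2$ while the free off-diagonal blocks contribute the terms $(p_kp_l)^2$, so the two affine hulls coincide. Granting this, a relative ball of some radius $r_{\mcl{A}}>0$ around $\mbf{E}_{\mcl{A}}$ inside $\mathrm{aff}\,\mcl{F}(\mcl{A})$ lies in $\mcl{MU}_{\mcl{A}'}$; since $\mcl{F}(\mcl{A})\subseteq\mathrm{aff}\,\mcl{F}(\mcl{A})$, every $\Phi\in\mcl{F}(\mcl{A})$ with $\norm{\Phi-\mbf{E}_{\mcl{A}}}\le r_{\mcl{A}}$ lies in $\mcl{MU}_{\mcl{A}'}\subseteq\msc{MU}(d)$, proving the theorem. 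The delicate point throughout is the exact dimension bookkeeping (the trivial-representation and affine ``constant'' directions, and the real form cut out by Hermitian preservation); this is precisely where Watrous' theorem is genuinely upgraded from $\mbb{C}I_d$ to a general $\mcl{A}$, and where I would concentrate the effort.
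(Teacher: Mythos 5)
Your proposal is correct, and it takes a genuinely different route from the paper. The paper treats the two convexity facts it needs as black boxes imported from \cite{KJRM23} --- namely that $\mbf{E}_{\mcl{A}}$ is mixed unitary (Lemma 12) and that there is a fixed $p\in(0,1)$ with $p\Phi+(1-p)\mbf{E}_{\mcl{A}}$ mixed unitary for every $\Phi\in\mcl{F}(\mcl{A})$ (Theorem 4) --- and its actual work is a perturbation argument at the level of Choi matrices: it shows that $\Phi_p:=\mbf{E}_{\mcl{A}}+\tfrac{1}{p}(\Phi-\mbf{E}_{\mcl{A}})$ stays completely positive when $\norm{\Phi-\mbf{E}_{\mcl{A}}}\leq r_{\mcl{A}}:=pr/d$, with $r$ the least positive eigenvalue of an explicit block-diagonal matrix $Z$, using the block structure coming from Kraus operators in $\mcl{A}'$ and the tensorial Schur product. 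You instead avoid \cite{KJRM23} altogether: you realize $\mbf{E}_{\mcl{A}}$ as the Haar barycenter of $S=\{\mathrm{Ad}_U:U\in\mcl{U}(\mcl{A}')\}$, so it is a relative interior point of $\mathrm{co}(S)$, and then show the relative interior is taken with respect to an affine hull large enough to contain all of $\mcl{F}(\mcl{A})$. I checked the step you flag as delicate, and it does go through: $\mathrm{span}_{\mbb{R}}(S)$ is the Hermitian-preserving real form of $\pi(\mcl{U}(\mcl{A}'))''$, which is a direct sum of simple real algebras (one $\mbb{R}$ from the trivial isotypic component, one $\M{p_k^2-1}(\mbb{R})$ per diagonal block, one $\M{p_kp_l}(\mbb{C})_{\mbb{R}}$ per off-diagonal pair); the span of $\{\mathrm{Ad}_U-\mathrm{Ad}_V\}$ is a two-sided ideal in it meeting every nontrivial summand, so it equals the full $c=0$ slice, which pins down $\mathrm{aff}(S)$ without delicate counting and confirms your dimension formula (your phrase ``each distinct irreducible occurs once'' should read ``each distinct irreducible type contributes once to $\dim\pi''$'' --- the multiplicities are $n_k^2$ and $n_kn_l$, but they are irrelevant for the double commutant). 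Your approach buys two things the paper's does not: it is self-contained, and it gives the stronger conclusion that channels in $\mcl{F}(\mcl{A})$ near $\mbf{E}_{\mcl{A}}$ are convex combinations of $\mathrm{Ad}_U$ with $U$ in the unitary group of the commutant $\mcl{A}'$ --- in the terminology of the paper's last section, they are $G$-mixed unitary for $G=\mcl{U}(\mcl{A}')$. What it loses is any quantitative handle on $r_{\mcl{A}}$: your radius comes from a compactness and separation argument, whereas the paper's $r_{\mcl{A}}$ is explicit up to the constant $p$ from \cite{KJRM23}.
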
 \begin{proof}
Lemma 12 of \cite{KJRM23} proves that $\mbf{E}_{\mcl{A}}$ is mixed unitary. Theorem 4 of the same article shows that there exists a $p\in(0,1)$ such that $p\Phi+(1-p)\mbf{E}_{\mcl{A}}$ is mixed unitary, for all $\Phi\in\mcl{F}(\mcl{A})$. Now, for a given $\Phi\in\mcl{F}(\mcl{A})$, define $\Phi_{p}:=\mbf{E}_{\mcl{A}}+\frac{1}{p}(\Phi-\mbf{E}_{\mcl{A}})$. Clearly, $\Phi_p$ fixes elements of $\mcl{A}$. Suppose $\Phi_p$ is CP, then $\Phi_p\in\mcl{F}(\mcl{A})$, and hence $p\Phi_{p}+(1-p)\mbf{E}_{\mcl{A}}=\Phi$ is mixed unitary. Therefore, it suffices to prove that there exists $r_{\mcl{A}}> 0$ such that  $\norm{\Phi-\mbf{E}_{\mcl{A}}}\leq r_{\mcl{A}}$, then $\Phi_p$ is CP. For this, without loss of generality assume that $\mcl{A}=\bigoplus_{k=1}^\ell (I_{m_k}\otimes \M{n_k})$, where 
   $$I_{m_k}\otimes \M{n_k}:=\{I_{m_k}\otimes X\in\M{m_kn_k}: X\in\M{n_k}\}$$ 
and $n_k,m_k\in\mbb{N}$ such that $\sum_{k=1}^\ell m_kn_k=d$.    For computational simplicity, we will only prove the case when $\ell=2$, the general case follows similarly.  Let $\mcl{H}=\mcl{H}_1\oplus \mcl{H}_2,$ where $H_k=\mbb{C}^{m_k}\otimes \mbb{C}^{n_k}$, for $k=1,2$ and  $\bigcup_{k=1}^2\{e_r^{(k)}\otimes f_s^{(k)}: 1\leq r\leq m_k, 1\leq s\leq n_k\}$ be the standard orthonormal basis of $\mcl{H}$. Then, the associated  matrix units of $\B{\mcl{H}}$ are $\bigcup_{j,k=1}^2\{E_{rr'}^{(j,k)}\otimes F_{ss'}^{(j,k)}: 1\leq r\leq m_j,1\leq r'\leq m_k,1\leq s\leq n_j, 1\leq s'\leq n_k\}$, where $E_{rr'}^{(j,k)}=\ranko{e_r^{(j)}}{e_{r'}^{(k)}}$ and $F_{ss'}^{(j,k)}=\ranko{f_s^{(j)}}{f_{s'}^{(k)}}$. Write any $A\in\B{\mcl{H}}$ in the block form as
$A=\Matrix{A_{11}&A_{12}\\A_{21}&A_{22}}$, with $A_{jk}\in\B{\mcl{H}_k,\mcl{H}_j}$, for $j,k=1,2$. Note that, the map  $\mbf{E}_{\mcl{A}}$ is given by 
   \begin{align*}
       \mbf{E}_{\mcl{A}}\big(\Matrix{A_{11}&A_{12}\\A_{21}&A_{22}})=\Matrix{\frac{I_{m_1}}{m_1}\otimes (\tr\otimes \id)(A_{11})&0\\0&\frac{I_{m_2}}{m_2}\otimes (\tr\otimes \id)(A_{22})}.
   \end{align*}
   Let $\{K_j\}_{j=1}^N$ be a set of Kraus operators of $\Phi$. Since $\Phi\in\mcl{F}(\mcl{A})$, we get  that  
  \begin{align*}
      K_j= \Matrix{K_{j1}\otimes I_{n_1}&0\\0& K_{j2}\otimes I_{n_2}}, ~~\forall 1\leq j\leq N,
  \end{align*} 
  with $K_{ji}\in\M{m_i}$ such that $\sum_{j=1}^N K_{ji}^*K_{ji}=\sum_{j=1}^N K_{ji}K_{ji}^*=I_{m_i}$, for $i=1,2$. Therefore, the map $\Phi$ is given by
\begin{align*}
    \Phi\big(\Matrix{A_{11}&A_{12}\\A_{21}&A_{22}}\big)=\Matrix{\sum_{j=1}^N (K_{j1}^*\otimes I_{n_1})A_{11}(K_{j1}\otimes I_{n_1})&\sum_{j=1}^N(K_{j1}^*\otimes I_{n_1})A_{12}(K_{j2}\otimes I_{n_2})\\ \sum_{j=1}^N (K_{j2}^*\otimes I_{n_2}A_{21}(K_{j1}\otimes I_{n_1})&\sum_{j=1}^N (K_{j2}\otimes I_{n_2})A_{22}(K_{j2}\otimes I_{n_2})}.
\end{align*}
The Choi-matrix of $\Phi$ is seen to be

    \resizebox{.98\linewidth}{!}{
		\begin{minipage}{\linewidth}
\begin{align*}
  \msc{J}(\Phi)&= \frac{1}{d}\Big(\sum_{j,k=1}^2\sum_{r,r'=1}^{m_j,m_k}\sum_{s,s'=1}^{n_j,n_k}E_{rr'}^{(j,k)}\otimes F_{ss'}^{(j,k)}\otimes \Phi\big(E_{rr'}^{(j,k)}\otimes F_{ss'}^{(j,k)}\big)\Big)\\ 
&=\frac{1}{d}\Matrix{\sum_{r,r'=1}^{m_1}\sum_{s,s'=1}^{n_1}e_{rr'}^{(1,1)}\otimes f_{ss'}^{(1,1)}\otimes \sum_{j=1}^N (K_{j1}^*e_{rr'}^{(1,1)}K_{j1})\otimes f_{ss'}^{(1,1)}& \sum_{r,r'=1}^{m_1,m_2}\sum_{s,s'=1}^{n_1,n_2}e_{rr'}^{(1,2)}\otimes f_{ss'}^{(1,2)}\otimes \sum_{j=1}^N (K_{j1}^*e_{rr'}^{(1,2)}K_{j2})\otimes f_{ss'}^{(1,2)}\\\sum_{r,r'=1}^{m_2,m_1}\sum_{s,s'=1}^{n_2,n_1}e_{rr'}^{(2,1)}\otimes f_{ss'}^{(2,1)}\otimes \sum_{j=1}^N (K_{j2}^*e_{rr'}^{(2,1)}K_{j1})\otimes f_{ss'}^{(2,1)}&\sum_{r,r'=1}^{m_2}\sum_{s,s'=1}^{n_2}e_{rr'}^{(2,2)}\otimes f_{ss'}^{(2,2)}\otimes \sum_{j=1}^N (K_{j2}^*e_{rr'}^{(2,2)}K_{j2})\otimes f_{ss'}^{(2,2)}},   
\end{align*}
\end{minipage}}
 where $e_{rr'}^{(j,k)},f_{ss'}^{(j,k)}$ are the standard matrix units of $\M{m_j\times m_k}$ and $\M{n_j\times n_k}$ respectively. Note that if $\mcl{K}=\mcl{K}_1\oplus \mcl{K}_2$, then the matrix units $e_{rr'}$ in $\B{\mcl{K}_1}$ is treated as $E_{rr'}=\Matrix{e_{rr'}&0\\0&0}$ in $\B{\mcl{K}}$.
Now, by swapping the second and third tensor products in each block entry, the Choi-matrix is equal to 

\begin{align*}
    % \Matrix{\sum_{r,r'}E_{rr'}^{(1,1)}\otimes\sum_{j} (K_{j1}^*E_{rr'}^{(1,1)}K_{j1})\otimes \sum_{s,s'} F_{ss'}^{(1,1)}\otimes F_{ss'}^{(1,1)}& \sum_{r,r'}E_{rr'}^{(1,2)}\otimes \sum_{j} (K_{j1}^*E_{rr'}^{(1,2)}K_{j2})\otimes \sum_{s,s'} F_{ss'}^{(1,2)}\otimes F_{ss'}^{(1,2)}\\\sum_{r,r'}E_{rr'}^{(2,1)}\otimes \sum_{j} (K_{j2}^*E_{rr'}^{(2,1)}K_{j1})\otimes \sum_{s,s'}F_{ss'}^{(2,1)}\otimes F_{ss'}^{(2,1)}&\sum_{r,r'}E_{rr'}^{(2,2)}\otimes \sum_{j} (K_{j2}^*E_{rr'}^{(2,2)}K_{j2})\otimes\sum_{s,s'} F_{ss'}^{(2,2)}\otimes F_{ss'}^{(2,2)}}\\
    \frac{1}{d}\Matrix{A\otimes \ranko{f^{(1)}}{f^{(1)}}& B\otimes \ranko{f^{(1)}}{f^{(2)}}\\C\otimes \ranko{f^{(2)}}{f^{(1)}}&D\otimes\ranko{f^{(2)}}{f^{(2)}}},    
\end{align*}
where
\begin{align*}
A:&=\sum_{r,r'=1}^{m_1}e_{rr'}^{(1,1)}\otimes\sum_{j=1}^N (K_{j1}^*e_{rr'}^{(1,1)}K_{j1}),\qquad
    B:=\sum_{r,r'=1}^{m_1,m_2}e_{rr'}^{(1,2)}\otimes\sum_{j=1}^N (K_{j1}^*e_{rr'}^{(1,2)}K_{j2})\\
    C:&=\sum_{r,r'=1}^{m_2,m_1}e_{rr'}^{(2,1)}\otimes\sum_{j=1}^N (K_{j2}^*e_{rr'}^{(2,1)}K_{j1}),\qquad
    D:=\sum_{r,r'=1}^{m_2}e_{rr'}^{(2,2)}\otimes\sum_{j=1}^N (K_{j2}^*e_{rr'}^{(2,2)}K_{j2}),
\end{align*}
and $f^{(j)}=\sum_{s}f_s^{(j)}\otimes f_s^{(j)}$, for $j=1,2$. Note that $\frac{1}{m_1+m_2}\Matrix{A&B\\C&D}$ is the Choi-matrix (upto permutation equivalence) of the unital quantum channel $\Psi:\M{m_1+m_2}\to\M{m_1+m_2}$ given by 
\begin{align*}
    \Psi(\Matrix{X_{11}&X_{12}\\X_{21}&X_{22}})=\sum_{j=1}^N\Matrix{K_{j1}^*&0\\0&K_{j2}^*}\Matrix{X_{11}&X_{12}\\X_{21}&X_{22}}\Matrix{K_{j1}&0\\0&K_{j2}}.
\end{align*}
Thus, by letting $\Psi':=(m_1+m_2)\Psi$, the Choi-matrix (up to permutation equivalence) of $\Phi$ is equal to the tensorial Schur product (See \cite{VsSu15}) of $ \frac{1}{d}\msc{J}(\Psi')$ and $\ranko{f}{f}$. That is,
\begin{align*}
    \frac{1}{d}\msc{J}(\Psi')\circ^\otimes \ranko{f}{f},
\end{align*}
where $f=f^{(1)}\oplus f^{(2)}$. It is a principal submatrix of $\frac{1}{d}\msc{J}(\Psi' )\otimes \ranko{f}{f}$.
Using the similar argument, we also have that the Choi-matrix (upto permutation equivalence) of $\mbf{E}_{\mcl{A}}$ is equal to
\begin{align*}
    \frac{1}{d}\Matrix{\frac{I_{m_1}\otimes I_{m_1}}{m_1} &0\\0&\frac{I_{m_2}\otimes I_{m_2}}{m_2}}\circ^\otimes \ranko{f}{f}.
\end{align*}
Therefore, the Choi-matrix (upto permutation equivalence) of $\Phi_p$ is $\frac{1}{d}\big(Z+\frac{1}{p}(\msc{J}(\Psi')-Z)\big)\circ^\otimes \ranko{f}{f},$ where $Z=\Matrix{\frac{I_{m_1}\otimes I_{m_1}}{m_1} &0\\0&\frac{I_{m_2}\otimes I_{m_2}}{m_2}}$.
 Now, as $Z$ is strictly positive, taking  $r$ as the smallest positive eigenvalue of $Z$,  $Z+\Delta$ is positive,  
 whenever $\Delta=\Delta^*$ with $\norm{\Delta}\leq r$. Since  a  principal submatrix has norm less than or equal to the original matrix, we get 
$\norm{\msc{J}(\Phi)-\msc{J}(\mbf{E}_{\mcl{A}})}\geq \frac{1}{d}\norm{\msc{J}(\Psi')-Z}$. Set $r_\mcl{A}= \frac{pr}{d}$, then whenever $\Phi\in\mcl{F}(\mcl{A})$ with $\norm{\msc{J}(\Phi)-\msc{J}(\mbf{E}_{\mcl{A}})}\leq r_{\mcl{A}}$  we have $$\frac{1}{p}\norm{\msc{J}(\Psi')-Z}=\frac{d}{ p}\norm{\msc{J}(\Phi)-\msc{J}(\mbf{E}_{\mcl{A}})}\leq r.$$
Hence $\big(Z+\frac{1}{p}(\msc{J}(\Psi')-Z)\big)\geq 0$. By \cite[Proposition 1.3]{VsSu15}, $\frac{1}{d}\big(Z+\frac{1}{p}(\msc{J}(\Psi')-Z)\big)\circ^\otimes \ranko{f}{f}$ is positive. Further, this is equivalent to the map $\Phi_p$  being CP. This completes the proof.
\end{proof}

In general an automorphsim of a subalgebra $\mathcal{A}$ of $M_d$ need not be implemented by a unitary. For instance, take
$\mathcal{A}=\{ a(E_{11}+E_{22})+bE_{33}: a,b\in \mathbb{C}\}\subseteq \M{3}$, where $E_{ij}$ denotes matrix units and consider the automorphism $a(E_{11}+E_{22})+bE_{33}\mapsto b(E_{11}+E_{22})+aE_{33}.$ However, there are no such issues if the automorphism preserves the trace as the following result shows.

\begin{lem}\label{thm-stru-automor-finite}
    Let $\mcl{A}$ be a unital $C^*$-subalgebra of $\M{d}$. Let $\Psi$ be a trace-preserving $*$-automorphism of $\mcl{A}$. Then there exists a unitary $U\in\M{d}$ such that $\Psi(X)=\mathrm{Ad}_{U}(X), ~~\forall X\in \mcl{A}$ and $\mathrm{Ad}_U$ leaves $\mcl{A}^\perp$ invariant, where the orthogonal complement is taken with respect to the Hilbert-Schmidt inner product. 
\end{lem}

\begin{proof}
   Without loss of generality assume that $\mcl{A}=\oplus_{k=1}^\ell (I_{m_k}\otimes \M{n_k})$, where 
   $$I_{m_k}\otimes \M{n_k}:=\{I_{m_k}\otimes X\in\M{m_kn_k}: X\in\M{n_k}\}$$ and $n_k,m_k\in\mbb{N}$ such that $\sum_{k=1}^\ell n_km_k=d$. (Note that $m_k, n_k$ can be repeating.) Each summand
     \begin{align*} J_{i}:=\bigoplus_{k=1}^{\ell}\delta_{ik}(I_{m_k}\otimes\M{n_k}),
     \end{align*}
     is a minimal two-sided ideal of $\mcl{A}$, and these are the only minimal two-sided ideals of $\mcl{A}$. For any $1\leq k\leq \ell$ and $m\in\{m_1,m_2,\cdots,m_\ell\}$, let
      $$B_k(m):=\{i:J_i=\bigoplus_{k=1}^{\ell}\delta_{ik}(I_{m_k}\otimes\M{n_k}) \text{with }n_i=n_k, m_i=m\}.$$
      Since $\Psi$ is a bijective $*$-homomorphism, $\Psi(J_{i})$ is a minimal two-sided ideal for all $i\in B_k(m)$, and hence equal to $J_{j}$ for some $1\leq j\leq \ell$. By comparing the dimension of $\Psi(J_{i})$ and $J_j$, we have $n_j=n_i=n_k$. Now, $\Psi$ is also trace-preserving, implying that $m_i=m_j=m$. This shows that $j\in B_k(m)$. Furthermore, from the bijectivity of $\Psi$ it follows that, there exists a bijective function $\sigma_k:B_{k}(m)\to B_{k}(m)$ such that $\Psi(J_i)=J_{\sigma_k(i)}$. Since any $*$-autmorphism from $\M{n_k}$ to $\M{n_k}$ is of the form $\mathrm{Ad}_{U}$ for some unitary $U\in\M{n_k}$, we conclude that for every $i\in B_k(m)$ there exists a unitary $U_{\sigma_k(i)}\in\M{n_k}$ such that the map $\Psi\arrowvert_{J_{i}}:J_{i}\to J_{\sigma_k(i)}$ is given by 
      \begin{align*}
          \Psi\left(\bigoplus_{r=1}^{\ell}\delta_{ir}(I_{m_r}\otimes X)\right)&=\bigoplus_{r=1}^{\ell}\delta_{\sigma_k(i)r}(I_{m_r}\otimes U_{\sigma_k(i)}^*XU_{\sigma_k(i)}), ~~ \forall X\in\M{n_k}.
          % &=\bigoplus_{r=1}^{\ell}\delta_{\sigma_k(i)r}(I_{m_r}\otimes U_{\sigma_k(i)})^*(I_{m_r}\otimes X )(I_{m,r}\otimes U_{\sigma_k(i)})
      \end{align*}
      Let  $U_k=\bigoplus_{r=1}^\ell (I_{m_r}\otimes W_{r}),$  where $W_r=U_{\sigma_k(r)}$ if $r\in B_k(m)$, otherwise identity matrix in $\M{n_r}$, and $P_k$ in $\M{d}$ be the permutation matrix that reorders the blocks in $B_k(m)$ according to $\sigma_k$. That is,
      \begin{align*}
          P_k^*\left(\sum_{i\in B_k(m)}\bigoplus_{r=1}^{\ell}\delta_{ir}(I_{m_r}\otimes X_i) \right)P_k=\sum_{i\in B_k(m)}\bigoplus_{r=1}^{\ell}\delta_{\sigma_k(i)r}(I_{m_r}\otimes X_i), ~~~\forall X_i\in\M{n_k}.
      \end{align*}
Then, the  map $\Psi\arrowvert_{\sum_{i\in B_k(m)}J_i}:\sum_{i\in B_k(m)}J_i\to\sum_{i\in B_k(m)}J_i$ is of the form 
      \begin{align*}
          \Psi\left(\sum_{i\in B_k(m)}\bigoplus_{r=1}^{\ell}\delta_{ir}(I_{m_r}\otimes X_i)\right)&=\sum_{i\in B_k(m)}\bigoplus_{r=1}^{\ell}\delta_{\sigma_k(i)r}(I_{m_k}\otimes U_{\sigma_k(i)}^*X_iU_{\sigma_k(i)} )\\
          &=P_k^*\left(\sum_{i\in B_k(m)}\bigoplus_{r=1}^{\ell}\delta_{ir}(I_{m_k}\otimes U_{\sigma_k(i)}^*X_iU_{\sigma_k(i)})\right)P_k\\
          &=P_k^*U_k^*\left(\sum_{i\in B_k(m)}\bigoplus_{r=1}^{\ell}\delta_{ir}(I_{m_r}\otimes X_i)\right)U_kP_k,
      \end{align*}
for all $X_i\in\M{n_k}$. 
    Now, let $n_1, n_2,\cdots n_q$ be distinct numbers of the set $\{n_1,n_2,\cdots,n_\ell\}$ and $m_1,m_2\cdots,m_s$ be the distinct numbers of the set $\{m_1,m_2,\cdots m_\ell\}$ . Then $\mcl{A}=\sum_{k=1}^q\sum_{j=1}^s \mcl{A}_k(m_j)$, where $\mcl{A}_k(m_j):=\sum_{i\in B_k(m_j)}J_i$. Furthermore, for every $(k,j)\neq (k',j')$,  $\mcl{A}_k(m_j)\mcl{A}_{k'}(m_{j'})=0=\mcl{A}_{k'}(m_{j'})\mcl{A}_k(m_j)$. Consequently, one can choose a global permutation matrix $P\in\M{d}$ and a block diagonal unitary $V\in\M{d}$ such that $\Psi(Z)=P^*V^*ZVP$, for all $Z\in\mcl{A}$. Taking $U=VP$, completes the proof. %\textcolor{red}{The proof of the second part to be included.}
\end{proof}

Now we recall some basic notions required  to study  asymptotics of CP maps and quantum channels.  Due to its importance in quantum theory of open systems,  there is extensive literature on this topic. We will follow mostly
\cite{BhSa23}. However we  refer the reader to  \cite{BCGPY13, AFK25}  for some  recent works and the citations in them for further information.
\begin{defn}
    Let $\Phi:\M{d}\to\M{d}$ be a linear map.
    \begin{enumerate}[label=(\roman*)]
       % \item $\sigma_{pi}(\Phi):=\{\lambda\in\sigma(\Phi): \text{real part of }\   \lambda \text{ is zero}\}$.
\item The \emph{multiplicative domain} of  $\Phi $, is defined as  
\begin{align*}
    \msc{M}_{\Phi}:=\{X\in\M{d}: \Phi(XY)=\Phi(X)\Phi(Y) \mbox{ and } \Phi(YX)=\Phi(Y)\Phi(X),\quad ~\forall Y\in\M{d}\}.
\end{align*}

It is known that for a unital CP map $\Phi$, we have 
\begin{align*}
 \msc{M}_{\Phi}=\{X\in\M{d}:\Phi(X^*X)=\Phi(X^*)\Phi(X)\mbox{ and } \Phi(XX^*)=\Phi(X)\Phi(X^*)\}.   
\end{align*}       
\item The \emph{peripheral spectrum} of $\Phi$ be the set of all elements in the spectrum
 $\sigma(\Phi)$ whose absolute value is $1$.  i.e., $\{\lambda\in \sigma(\Phi): \abs{\lambda}=1\}$.
 \item The \emph{peripheral space} of $\Phi$ is defined as   
 $$ \mcl{P}(\Phi):=\lspan\{X:\Phi(X)=\lambda X, \abs{\lambda}=1\}.$$
    \end{enumerate}
\end{defn}

% It is known that for a unital CP map $\Phi$, we have 
% \begin{align*}
%  \msc{M}_{\Phi}=\{X\in\M{d}:\Phi(X^*X)=\Phi(X^*)\Phi(X)\mbox{ and } \Phi(XX^*)=\Phi(X)\Phi(X^*)\}.   
% \end{align*}
 If $\Phi$ is a unital quantum channel,  making use of Kadison-Schwarz inequality it can be seen that  the peripheral space is contained in the multiplicative domain: $\mcl{P}(\Phi) \subseteq \msc{M}_{\Phi}$.

\begin{defn}
    A linear map $\alpha :\M{d}\to\M{d}$ said to be a 
%    \begin{enumerate}
%        \item \emph{$*$-automorphism} if $\Phi$ %is bijective Hermitian preserving map and %$\Phi(XY)=\Phi(X)\Phi(Y)$ for all $X,Y\in\M{d}$.
 \emph {conditional $*$-automorphism\/} if $\alpha = \Pi \circ \mbf{E}_{\mcl{A}}$, where $\mbf{E}_{\mcl{A}}$ is a trace-preserving conditional expectation onto some $C^*$-subalgebra $\mcl{A}$ of $\M{d}$ and $\Pi$ is a $*$-automorphism on $\M{d}$, which commutes with $\mbf{E}_{\mcl{A}}$.
    %\end{enumerate}
\end{defn}

\begin{thm}\label{asymptotic}
Let $\Phi$ be a unital quantum channel on $\M{d}.$  Then $\{\Phi ^n\}_{n\in \mathbb{N}}$ is   asymptotically a conditional $*$-automorphism. i.e., there exists a conditional $*$-automorphism $\alpha$ on $\M{d}$ such that 
    \begin{align*}
        \lim _{n\to \infty} \norm{\Phi^n-\alpha^n}= 0.
    \end{align*}
%Also there exists an automorphism $\gamma $ of $\M{d}$, such that $(\gamma \circ \Phi )^n$ converges to the conditional expectation map onto the peripheral space of $\Phi $ as $n$ tends to infinity.
%\textcolor{red}{I removed the additional line which was here. I think it is wrong.}
\end{thm}
\begin{proof}
    Let $\mcl{P}$ be the peripheral space of $\Phi$. That is, $\mcl{P}=\lspan\{X:\Phi(X)=\lambda X, \abs{\lambda}=1\}$.
   Then, the assumption that $\Phi $ is trace-preserving ensures  that  $\mcl{P}$ is a $C^*$-sub algebra of $\M{d}$ (See \cite{BhSa23}).
    Let $\mbf{E}_{\mcl{P}}$ be the trace-preserving conditional expectation onto $\mcl{P}$.  
    Considering $\M{d}$ as a Hilbert space with Hilbert-Schmidt inner product, $\mbf{E}_{\mcl{P}}$ is just the orthogonal projection on to the subspace $\mcl{P}.$
    Moreover  we have the  following (\cite{BhSa23}, Theorem 3.1 and 3.4):
    \begin{enumerate}[label=(\roman*)]
        \item $\M{d}=\mcl{P}\oplus\mcl{N}$, where $\mcl{N}=\{Y: \lim_{n\to\infty}\Phi^n(Y)= 0\}$ is an orthogonal direct sum with respect to the Hilbert-Schmidt inner product;
        \item $\Phi=\alpha+\beta$, where $\alpha:=\Phi\circ\mbf{E}_{\mcl{P}}:\M{d}\to\mcl{P}\subseteq\M{d} \  \text{ and }\ \beta:=\Phi-\alpha =\Phi \circ (\id - \mbf{E}_{\mcl{P}}):\M{d}\to\mcl{N}\subseteq\M{d}$;
        \item  $\Phi\arrowvert_{\mcl{P}}:\mcl{P}\to\mcl{P}$ is a $*$-automorphism and $\lim_{n\to\infty}\beta^n=0$.
    \end{enumerate}
   As $\Phi\arrowvert_{\mcl{P}}$ is trace-preserving $*$-automorphism, from Lemma \ref{thm-stru-automor-finite},  there exists a unitary $U\in\M{d}$ such that $\mathrm{Ad}_U(\mcl{P})=\mcl{P}$ and
%\begin{align*}
  $     \Phi\arrowvert_{\mcl{P}}=\mathrm{Ad}_{U}\arrowvert_{\mcl{P}}$.
 %  \end{align*}
This shows that the map, $\alpha=\mathrm{Ad}_{U}\circ \mbf{E}_{\mcl{P}}=\mbf{E}_{\mcl{P}}\circ \mathrm{Ad}_{U}$ is a conditional $*$-automorphism. Furthermore, 
\begin{align*}
    \norm{\Phi^n-\alpha^n}=\norm{\beta^n}\to 0, ~~~\text{as } n\to\infty.
\end{align*}
Note that $\mathrm{Ad}_U$ is an automorphism of $\M{d}$ and it leaves $\mcl{P}$ and $\mcl{N}$ invariant. 
%Taking  $\gamma =\mathrm{Ad}_{U^*}$, we see that $\gamma \circ \Phi  = \mbf{E}_{\mcl{P}} + \mathrm{Ad}_{U^*}\circ \beta $. Note that $\mathrm{Ad}_{U^*}$ being an automorphism, $$\|(\mathrm{Ad}_{U^*}\circ \beta )^n\|= \|\beta ^n\| $$ converges to zero. Consequently
%$(\gamma \circ \Phi)^n$ converges to $\mbf{E}_{\mcl{P}}$.
\end{proof}

Finally we are in a position to state and prove our main result.

\begin{thm}\label{thm-even-mu-discrete}
    Let $\Phi:\M{d}\to\M{d}$ be a unital quantum channel. Then $\{\Phi ^n\}_{n\in \mathbb{N}}$ is eventually mixed unitary, that is, there exists a $N\geq 1$ such that $\Phi^n$ is mixed unitary for all $n\geq N$.
\end{thm}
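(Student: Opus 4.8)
The plan is to exploit the asymptotic structure furnished by Theorem \ref{asymptotic} and reduce the claim to Theorem \ref{thm-cond-exp-mu}, after correcting for the fact that $\Phi$ acts on its peripheral space as a nontrivial automorphism rather than as the identity. Write $\mcl{P}=\mcl{P}(\Phi)$ for the peripheral space, a $C^*$-subalgebra, and let $\mbf{E}_{\mcl{P}}$ be the trace-preserving conditional expectation onto it. By Theorem \ref{asymptotic} there is a unitary $U\in\M{d}$ with $\mathrm{Ad}_U(\mcl{P})=\mcl{P}$ and $\Phi\arrowvert_{\mcl{P}}=\mathrm{Ad}_U\arrowvert_{\mcl{P}}$, and, setting $\alpha=\mathrm{Ad}_U\circ\mbf{E}_{\mcl{P}}=\mbf{E}_{\mcl{P}}\circ\mathrm{Ad}_U$, one has $\alpha^n=\mathrm{Ad}_{U^n}\circ\mbf{E}_{\mcl{P}}$ together with $\norm{\Phi^n-\alpha^n}\to 0$.

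The key device is to \emph{untwist} the peripheral rotation. Define $\Theta_n:=\mathrm{Ad}_{U^{-n}}\circ\Phi^n$, which is a unital quantum channel for each $n$, being a composition of unital quantum channels. First I would check that $\Theta_n\to\mbf{E}_{\mcl{P}}$: since $\mathrm{Ad}_{U^{-n}}\circ\alpha^n=\mathrm{Ad}_{U^{-n}}\circ\mathrm{Ad}_{U^n}\circ\mbf{E}_{\mcl{P}}=\mbf{E}_{\mcl{P}}$ and $\mathrm{Ad}_{U^{-n}}$ is a unital $*$-automorphism of $\M{d}$, hence a superoperator of norm $1$, we obtain $\norm{\Theta_n-\mbf{E}_{\mcl{P}}}=\norm{\mathrm{Ad}_{U^{-n}}\circ(\Phi^n-\alpha^n)}\le\norm{\Phi^n-\alpha^n}\to 0$. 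Next I would verify that $\Theta_n\in\mcl{F}(\mcl{P})$, i.e. that it fixes $\mcl{P}$ pointwise: on the invariant subspace $\mcl{P}$ one has $\Phi^n\arrowvert_{\mcl{P}}=\mathrm{Ad}_{U^n}\arrowvert_{\mcl{P}}$, so that $\Theta_n\arrowvert_{\mcl{P}}=\mathrm{Ad}_{U^{-n}}\circ\mathrm{Ad}_{U^n}\arrowvert_{\mcl{P}}=\id\arrowvert_{\mcl{P}}$.

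With these two facts, Theorem \ref{thm-cond-exp-mu} applies directly: it supplies a radius $r_{\mcl{P}}>0$ such that every element of $\mcl{F}(\mcl{P})$ lying within $r_{\mcl{P}}$ of $\mbf{E}_{\mcl{P}}$ is mixed unitary. Since $\Theta_n\to\mbf{E}_{\mcl{P}}$ and each $\Theta_n\in\mcl{F}(\mcl{P})$, there is an $N\geq 1$ with $\norm{\Theta_n-\mbf{E}_{\mcl{P}}}\le r_{\mcl{P}}$ for all $n\ge N$, so $\Theta_n$ is mixed unitary whenever $n\ge N$. Finally I would re-twist: $\Phi^n=\mathrm{Ad}_{U^n}\circ\Theta_n$ is the composition of the unitary channel $\mathrm{Ad}_{U^n}$ (trivially mixed unitary) with the mixed unitary channel $\Theta_n$, and since $\msc{MU}(d)$ is closed under composition by Lemma \ref{lem-interior-dual}(i), it follows that $\Phi^n$ is mixed unitary for every $n\ge N$.

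The main obstacle, and the whole reason for introducing the conditional automorphism on the peripheral eigenspace, is precisely this twist: one cannot feed $\Phi^n$ into Theorem \ref{thm-cond-exp-mu} directly, because $\Phi^n$ does not fix $\mcl{P}$ but rotates it by $\mathrm{Ad}_{U^n}$, and the same algebra $\mcl{P}$ must serve all powers at once, which is exactly what cures the defect noted in \cite{KJRM23} that the fixed-point algebra of $\Phi^k$ need not be fixed by $\Phi^{k+1}$. The one technical point to record carefully is that $\mathrm{Ad}_{U^{-n}}$ acts as a superoperator isometry, so that the convergence $\norm{\Phi^n-\alpha^n}\to 0$ transfers to $\norm{\Theta_n-\mbf{E}_{\mcl{P}}}\to 0$ uniformly despite the $n$-dependent conjugation.
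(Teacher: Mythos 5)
Your proposal is correct and follows essentially the same route as the paper's own proof: you untwist the peripheral automorphism by setting $\Theta_n=\mathrm{Ad}_{U^{-n}}\circ\Phi^n$, which is exactly the paper's $\tau_n=\mathrm{Ad}_{(U^*)^n}\circ\Phi^n$, verify membership in $\mcl{F}(\mcl{P})$ and convergence to $\mbf{E}_{\mcl{P}}$, and then invoke Theorem \ref{thm-cond-exp-mu} before re-twisting by $\mathrm{Ad}_{U^n}$. The only cosmetic differences are your use of the inequality $\norm{\mathrm{Ad}_{U^{-n}}\circ(\Phi^n-\alpha^n)}\le\norm{\Phi^n-\alpha^n}$ where the paper records equality, and your explicit citation of Lemma \ref{lem-interior-dual}(i) for closure under composition.
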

\begin{proof}
We consider the decomposition $\Phi =\alpha +\beta $ and other notation as in
Theorem \ref{asymptotic}. Note that $\alpha^n=\Phi^n\circ\mbf{E}_{\mcl{P}}$ and $\mathrm{Ad}_{(U^*)^{n}}\circ\alpha^n=\mbf{E}_{\mcl{P}}$ for all $n\geq 1$. Thus by 
 % It is also easy to see that  $(\mathrm{Ad}_{U^*}\circ\alpha)^n=\mbf{E}_{\mcl{P}}$ for all $n\geq 1$.  
taking $\tau_n:=\mathrm{Ad}_{(U^*)^n}\circ\Phi^n$ and $X\in\mcl{P}$, we get
 \begin{align*}
     \tau_n(X)&=\mathrm{Ad}_{(U^*)^n}\circ\Phi^n(X)
     =\mathrm{Ad}_{(U^*)^n}\circ\Phi^n\circ\mbf{E}_{\mcl{P}}(X)
     =\mathrm{Ad}_{(U^*)^n}\circ \alpha^n(X)
     =\mbf{E}_{\mcl{P}}(X)
     =X.
 \end{align*}
That is, $\tau_n\in\mcl{F}(\mcl{P})$ for all $n\geq 1$. Furthermore,
\begin{align*}
    \norm{\beta^n}&=\norm{\Phi^n-\alpha^n}=\norm{\mathrm{Ad}_{(U^*)^n}\circ(\Phi^n-\alpha^n)}
            =\norm{\tau_n-\mbf{E}_{\mcl{P}}}.
\end{align*}
 This implies that $\tau_n\to\mbf{E}_{\mcl{P}}$ as $n\to\infty$. Consider $r_{\mcl{P}}>0$ as in Theorem \ref{thm-cond-exp-mu}, then we get a $N\geq 1$ such that $\norm{\tau_n-\mbf{E}_{\mcl{P}}}\leq r_{\mcl{P}}$ for all $n\geq N$. Now, by Theorem \ref{thm-cond-exp-mu}, $\tau_n$ is mixed unitary for all $n\geq N$. Consequently, $\Phi^n=\mathrm{Ad}_{U^n}\circ\tau_n$ is mixed unitary for all $n\geq N$.
\end{proof}

%%%%%%%%%%%%%%%%%%%%%%%%%%%%%%%%%%%%%%%%%%%%%%%%%%%%%%%%%%%%%%%%%%%%%%%%%%%%%%%%%%%%%%%%%%%%%%%%%%%%%%%%%%

\section{Quantum dynamical semigroups of unital quantum channels}
Theorem \ref{thm-even-mu-discrete} establishes that every discrete semigroup of unital quantum channels is eventually mixed unitary. Naturally, this raises the question of whether an analogous result holds for continuous semigroups.
To this end, we start by recalling the relevant definitions and results for quantum dynamical semigroups.

\begin{defn}
      A family of linear maps $\{\Phi_t\}_{t\geq 0}$ on $\M{d}$ is said to be a \it{one parameter semigroup} if 
 $ \Phi_0=\id;  ~~\Phi_t\circ\Phi_s=\Phi_{s+t}$, for all $t,s\geq 0$,
%          \item $\Phi _t$ is completely positive for every $t\geq 0$ 
and the map $t\mapsto \Phi_t$ is continuous. A one parameter semigroup $\{\Phi _t\}$ is said to be a \it{quantum dynamical semigroup} if $\Phi _t$ is completely positive for every $t\geq 0.$ It is said to be unital  $\Phi_t(I_d)=I_d$ for every $t$ and a channel if $\Phi _t$ is trace-preserving for every $t.$
  \end{defn}
  
If $\{\Phi_t\}_{t\geq 0}$ is a one parameter semigroup of linear maps on $\M{d}$, then the map $$\mathcal{L} (X):= \lim_{t\to 0^+}\frac{\Phi_t(X)-X}{t}, ~~X\in \M{d},$$ is a well-defined  linear map on $\M{d}$, called the \it{infinitesimal generator}  of the semigroup $\{\Phi_t\}_{t\geq 0}$. Furthermore,   $$\Phi_t(X)=e^{t\mathcal{L}}(X) =\sum _{n=0}^\infty \frac{(t\mathcal{L})^n(X)}{n!},~~\forall X\in\M{d}, ~~\forall t\geq 0.$$ 
The generators of quantum dynamical semigroups is characterized by the famous GKLS theorem (See \cite{GKS76,Lin76,KRP,CP17}), namely, we have the following theorem: 
\begin{thm}\label{thm-cp-semigroup}\cite{GKS76}
Consider a one parameter semigroup of linear maps $\{\Phi_t\}_{t\geq 0}$ on $\M{d}$. Then the following are equivalent:
\begin{enumerate}[label=(\roman*)]
\item $\{\Phi_t \}$ is a quantum dynamical semigroup;        
\item The generator $\mathcal{L}$ of the semigroup $\Phi_t$ is of the form 
$$\mathcal{L}(X)=G^*X+XG+\sum_{j=1}^{n}L_j^*XL_j,$$
 where $G, L_j\in\M{d}, 1\leq j\leq n, n\geq 0$.
 \end{enumerate}
 Further, for a quantum dynamical semigroup $\{\Phi _t\},$ with generator $\mathcal{L} $ as above:  (a)  It is unital iff $\mathcal{L}(I)=0$ iff $G=-iH-\frac{1}{2}\sum _{j=1}^n L_j^*L_j$ for some self-adjoint $H$; (b) It is a quantum channel
 iff $\tr\big(\mathcal{L}(X)\big)=0$ for every $X\in \M{d}$ iff $G=-iH-\frac{1}{2}\sum _{j=1}^nL_jL_j^*.$

In this article, we restrict our focus to unital quantum channels. In view of the preceding theorem, it follows that a linear map $\mathcal{L}:\M{d}\to \M{d}$ is the generator of a quantum dynamical semigroup of unital quantum channels iff it has the form:
\begin{equation}\label{GKLS}
\mathcal{L}(X)=i[H,X]-\frac{1}{2}\sum _{j=1}^n(L_j^*L_jX+XL_j^*L_j-2L_j^*XL_j),~~\forall X\in \M{d},
\end{equation}%\label{GKLS}
 for some $H, L_j, 1\leq j\leq n$ in $\M{d}, ~n\geq 0,$ satisfying $H=H^*, ~\sum _{j=1}^nL_jL_j^*=\sum _{j=1}^nL_j^*L_j.$
%     \item There exists a unital quantum channel $\Psi$ and $A\in\M{d}$ such that infinitesimal generator $L$ is given by
%        \begin{align*}
%            L(X)=\Psi(X)-AX-XA^*,~~~\forall X\in\M{d}.
%        \end{align*}
 %       % \item $\Phi_t(X)=e^{tL}(X)$ for all $X\in\M{d}$. Here $L$ is the Hermitian preserving linear map on $\M{d}$ such %that $\tr(L(X))=0$ for all $X\in\M{d}$ and $P(\id\otimes L)(\ranko{\Omega_d}{\Omega_d})P\geq 0$, where $P$ denotes the %orthogonal projection onto the space orthogonal complement of the vector $\Omega_d$.
 %   \end{enumerate}
\end{thm}

The operators $H, L_j$'s appearing in equation (\ref{GKLS}) are called as GKLS coefficients/parameters of $\mcl{L}$. They are not uniquely determined. However, the extent of non-uniqueness can be described (See page 272 of \cite{KRP}). We simply choose and fix one representation of $\mcl{L}$ as in (\ref{GKLS}) and work with it.

The generators of continuous dynamical semigroups of mixed unitary channels are characterized in \cite{KuMa87}, where it is also shown that in general unital semigroups of quantum channels are not necessarily mixed unitary at all times (See Example \ref{eg-semigp-not-mu-kumm}). This raises the question of their asymptotic behavior: for a given unital dynamical semigroup $\{\Phi_t\}_{t \geq 0}$ on $\mathbb{M}_d$, must there exist a threshold $t_0$ after which $\Phi_t$ remains mixed unitary? In Theorem \ref{thm-semigp-eve-MU}, we prove that this is indeed the case.

%In the case of the continuous dynamical semigroup of mixed unitary channels, the characterization of \it{ generators} is known in \cite{KuMa87}. They also gave an example of a (continuous) dynamical semigroup of unital quantum channels that is not mixed unitary for some time (See Example \ref{eg-semigp-not-mu-kumm}). So, not every dynamical semigroup of unital quantum channels is mixed unitary for all times. Naturally, one can ask what is the behaviour of this semigroup at infinity. More precisely, we are interested in the question: For a given (continuous) dynamical semigroup $\{\Phi_t\}_{t\geq 0}$ of unital quantum channels on $\M{d}$, does there exists a time $t_0>0$ such that for all times $t\geq t_0$, $\Phi_t$ becomes mixed unitary? In this article, we prove that the answer to the above question is affirmative (See Theorem \ref{thm-semigp-eve-MU}). 

\begin{thm}\cite{KuMa87}\label{thm-mu-semi-gene}
    Let $\{\Phi_t\}_{t\geq 0}$ be a quantum dynamical semigroup of unital channels. Then the following are equivalent:
    \begin{enumerate}[label=(\roman*)]
        \item $\Phi_t$ is mixed unitary, for all $t\geq 0$;
        \item  The generator  $\mathcal{L}$ of $\{\Phi _t\}$ is given by
        \begin{equation*}
            \mathcal{L}(X)=i[H,X]-\frac{1}{2}\sum_{j=1}^{n}(L_j^2X+XL_j^2-2L_jXL_j)+\sum_{k=1}^{\ell}\lambda_k(U_k^*XU_k-X),
        \end{equation*} 
        where $H,L_j$ are Hermitian, $U_k$ are unitary,  $\lambda_k\geq 0$ 
         and $n,\ell\geq 0$;
        % Hermitian matrices $H,L_j\in\M{d}, 1\leq j\leq n, n\geq 0$, unitary matrices $U_k\in\mbb{U}(d)$ and non-negative scalars $\lambda_k, 1\leq k\leq \ell$.
        
        \item The infinitesimal generator $\mathcal{L}$ is in the closed cone generated by $\{\mathrm{Ad}_{U}-\id: U\in\mbb{U}(d)\}$.
    \end{enumerate}
\end{thm}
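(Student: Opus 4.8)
The plan is to establish the two equivalences $(i)\Leftrightarrow(iii)$ and $(ii)\Leftrightarrow(iii)$, which together give the theorem. Throughout write $\mcl{C}:=\overline{\mathrm{cone}}\{\mathrm{Ad}_U-\id:U\in\mbb{U}(d)\}$ for the closed convex cone in the real Hilbert space $\msc{H}(d)$ appearing in $(iii)$; the idea is to describe $\mcl{C}$ in two complementary ways, analytically via its dual cone and the bipolar theorem (the double dual of a set equals the closed convex cone it generates, as recalled before Lemma \ref{lem-interior-dual}), and structurally via the GKLS form of Theorem \ref{thm-cp-semigroup}. For $(iii)\Rightarrow(i)$ I would first treat a finite nonnegative combination $\mathcal{L}_0=\sum_k c_k(\mathrm{Ad}_{U_k}-\id)$. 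Writing $\mathcal{L}_0=\Psi-c\,\id$ with $\Psi=\sum_k c_k\,\mathrm{Ad}_{U_k}$ and $c=\sum_k c_k$, one has $e^{t\mathcal{L}_0}=e^{-tc}\sum_{m\ge0}\frac{t^m}{m!}\Psi^m$. Since $\mathrm{Ad}_{U}\circ\mathrm{Ad}_{V}=\mathrm{Ad}_{VU}$, each $\Psi^m$ is a nonnegative combination of unitary channels of total weight $c^m$, so the normalized partial sums of the series are convex combinations of unitary channels, i.e.\ elements of $\msc{MU}(d)$, and their normalizations tend to $1$ because $e^{-tc}\sum_m \frac{(tc)^m}{m!}=1$. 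As $\msc{MU}(d)$ is compact (Lemma \ref{lem-interior-dual}$(i)$), the limit $e^{t\mathcal{L}_0}$ is mixed unitary for every $t$; for general $\mathcal{L}\in\mcl{C}$ I would approximate $\mathcal{L}=\lim_n\mathcal{L}_n$ by such finite combinations and use continuity of the exponential together with closedness of $\msc{MU}(d)$.

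For $(i)\Rightarrow(iii)$, by the bipolar theorem $\mcl{C}=\mcl{C}^{\circ\circ}$, so it suffices to prove $\ip{\Gamma,\mathcal{L}}\ge0$ for every $\Gamma\in\mcl{C}^\circ=\{\Gamma\in\msc{H}(d):\ip{\Gamma,\mathrm{Ad}_U-\id}\ge0\ \text{for all }U\in\mbb{U}(d)\}$. Fixing such a $\Gamma$ and writing each mixed-unitary $\Phi_t=\sum_j\mu_j(t)\,\mathrm{Ad}_{V_j(t)}$ with $\mu_j\ge0$ and $\sum_j\mu_j=1$, one gets $\ip{\Gamma,\Phi_t-\id}=\sum_j\mu_j\ip{\Gamma,\mathrm{Ad}_{V_j}-\id}\ge0$; dividing by $t$ and letting $t\to0^+$ yields $\ip{\Gamma,\mathcal{L}}\ge0$. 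This is the exact semigroup analogue of Lemma \ref{lem-interior-dual}$(iii)$.

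For $(ii)\Leftrightarrow(iii)$, let $S\subseteq\msc{H}(d)$ be the set of generators of the form displayed in $(ii)$. It is a convex cone, and it contains every $\mathrm{Ad}_U-\id$ (a pure single jump), so $\mcl{C}\subseteq\overline{S}$. For the reverse inclusion $S\subseteq\mcl{C}$, the jump part is already a nonnegative combination of the $\mathrm{Ad}_{U_k}-\id$; the Hamiltonian part satisfies $i[H,\cdot]=\lim_{s\to0^+}\tfrac1s(\mathrm{Ad}_{e^{-isH}}-\id)$, a limit of positive multiples of cone generators; and for Hermitian $L$ a second-order expansion gives the Hermitian dissipator as $-\tfrac12(L^2X+XL^2-2LXL)=\tfrac12\lim_{s\to0^+}\tfrac1{s^2}\big((\mathrm{Ad}_{e^{isL}}-\id)+(\mathrm{Ad}_{e^{-isL}}-\id)\big)$, again a limit of cone generators. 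Since $\mcl{C}$ is a closed cone these limits lie in $\mcl{C}$, so $S\subseteq\mcl{C}$. The theorem then reduces to showing $S$ is closed, for then $\mcl{C}=\overline{S}=S$ and $(iii)\Rightarrow(ii)$ follows.

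I expect closedness of $S$ to be the main obstacle, for essentially Lévy--Khintchine reasons: along a convergent sequence of form-$(ii)$ generators the number of jump terms may grow without bound, with jump weights accumulating near $U=I$, where $\mathrm{Ad}_U-\id\to0$. I would tame this by (a) a Carathéodory argument bounding the number of diffusion and jump terms needed in the fixed finite-dimensional space $\msc{H}(d)$; (b) splitting each jump part into a piece supported away from $I$, which has bounded mass and a weak-$*$ convergent subsequence by compactness of $\mbb{U}(d)$, and a piece concentrated near $I$, whose low-order Taylor data converge to a Hamiltonian term and a Hermitian (Gaussian) dissipator with bounded Kossakowski form; and (c) reassembling the drift, Gaussian, and jump limits into a generator again of the form $(ii)$. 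This separation into drift, diffusion, and jump contributions is precisely the structural content underlying the statement, and is where I expect the real work to lie.
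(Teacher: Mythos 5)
Your proofs of $(i)\Leftrightarrow(iii)$ and of $(ii)\Rightarrow(iii)$ are correct, and it is worth noting that the paper itself never proves this theorem: it is quoted from \cite{KuMa87}, and the only part the paper establishes independently is the equivalence of $(i)$ with the cone condition, inside Theorem \ref{thm-schonberg-mixed unitary}. There, $(i)\Rightarrow(\mathrm{cone})$ is done exactly as you do it (the difference quotients $\frac{e^{t\mcl{L}}-\id}{t}$ lie in the cone; your extra bipolar step is harmless but unnecessary), while the converse is run through the dual machinery of $\msc{D}(d)$, Lemma \ref{lem-mixed unitary-semigroup} and the approximation $e^{\mcl{L}}=\lim_n(\id+\frac{\mcl{L}}{n})^n$. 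Your alternative for that converse --- exponentiate a finite nonnegative combination as $e^{t(\Psi-c\,\id)}=e^{-tc}\sum_m\frac{t^m}{m!}\Psi^m$, observe that this is a limit of mixed unitary channels, then pass from finite combinations to the closed cone using continuity of the exponential and closedness of $\msc{MU}(d)$ --- is a genuinely more elementary argument, and your realization of $i[H,\cdot]$ and of the Hermitian dissipators as first- and second-order limits of cone elements correctly yields $(ii)\Rightarrow(iii)$.

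The genuine gap is $(iii)\Rightarrow(ii)$. You reduce it to closedness of the set $S$ of generators of form $(ii)$, which is the right reduction, but closedness of $S$ \emph{is} the K\"ummerer--Maassen theorem: it is exactly the quantum L\'evy--Khintchine step, and your outline (a)--(c) is a plan, not a proof. The difficulty you would have to resolve is that for a convergent sequence $\mcl{L}_n\in S$ the individual ingredients need not converge or even stay bounded: the jump masses can diverge while the jump unitaries concentrate at $I$, and the drift terms $-ic_ns_n[L,\cdot]$ produced by such near-identity jumps blow up and are cancelled only by unbounded compensating Hamiltonians $H_n$, so one must recentre the decomposition before any compactness argument applies; none of this is carried out. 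A further wrinkle: as stated in the paper, $(ii)$ carries the normalization $\sum_k\lambda_k=1$, under which $S$ is not closed under positive scaling and hence not literally a convex cone as you claim (padding with $U_k=I$ only repairs total jump mass at most one); your argument implicitly reads $(ii)$ with arbitrary nonnegative weights, which is the form in \cite{KuMa87}, so this is a defect of the statement's rendition rather than of your strategy, but it should be flagged. In sum, the two equivalences you complete are sound and partly improve on the paper's own route, but the deep direction of the theorem remains unproved in your proposal, precisely where the paper defers to \cite{KuMa87}.
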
 

We observe that for a generator $\mathcal{L}$ to generate mixed unitary channels at all times it is necessary and sufficient that the 
coefficients $L_j$'s appearing in its GKLS expansion can be taken to be either self-adjoint or scalar multiples of unitary.

We analyze the semigroup using the Jordan decomposition of its generator. For a given $\lambda \in \mathbb{C}$, let $J_r(\lambda) \in \mathbb{M}_r$ denote the Jordan block of size $r \in \mathbb{N}$ associated with the eigenvalue $\lambda$. This matrix is defined by having $\lambda$ at each diagonal entry and $1$ at each entry of the super-diagonal.
%and denoted as
% \begin{align*}
%     J_r(\lambda)=\Matrix{\lambda & 1 & 0 &\cdots & 0&0 \\
%0 & \lambda & 1 & \cdots &0&0 \\
%\vdots&\vdots&\vdots&\cdots&\vdots&\vdots\\
%0 & 0 & 0& \cdots& \lambda & 1 \\
%0 & 0 & 0 &\cdots &0 & \lambda} \in\M{r}.
% \end{align*}
Note that for $r=1$, $J_1(\lambda )=[\lambda ]\in \M{1}$. 
The following Lemma is perhaps known in the literature, but we are providing a proof for completeness.  
\begin{lem}\label{lem-JCF-gena}
Let $\mathcal{L}:\M{d}\to\M{d}$ be a linear map and let  $\bigoplus_{k=1}^{m}J_{d_k}(\lambda_k)$
be the Jordan decomposition of $\mathcal{L}$ with respect to  some basis of $\M{d}$. If $e^{t\mathcal{L}}$ is a quantum dynamical semigroup of unital maps on $\M{d}$, 
     \begin{enumerate}[label=(\roman*)]
      \item For every $k$, Re~$(\lambda _k)\leq 0;$
      \item All Jordan blocks corresponding to purely imaginary eigenvalues are one dimensional, that is, if Re~$(\lambda _k)=0$, then $d_k=1;$
      \item If Re~$(\lambda _k)<0$, then  $\lim_{t\to\infty}e^{tJ_{d_k}(\lambda _k)}=0$.
      \end{enumerate}
\end{lem}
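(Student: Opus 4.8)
The plan is to analyze the semigroup $\{e^{t\mathcal{L}}\}_{t\geq 0}$ through its action on $\M{d}$ regarded as a Hilbert space with the Hilbert-Schmidt inner product, and to extract the three claims from the boundedness of the maps $e^{t\mathcal{L}}$ combined with standard facts about Jordan forms and matrix exponentials. The crucial structural input is that each $e^{t\mathcal{L}}$ is a \emph{unital} completely positive trace-preserving map, hence a \emph{contraction} for the operator norm on $\B{\M{d}}$ (indeed $\norm{e^{t\mathcal{L}}}=1$ since it is unital and positive, so it fixes $I_d$ and cannot increase norms). This uniform bound $\snorm{e^{t\mathcal{L}}}\leq C$ for all $t\geq 0$ is what forces the spectral constraints.

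First I would recall the explicit exponential of a single Jordan block: if $J=J_r(\lambda)$, then $e^{tJ}=e^{t\lambda}\sum_{i=0}^{r-1}\frac{t^i}{i!}N^i$, where $N$ is the nilpotent superdiagonal part, so the entries of $e^{tJ}$ are of the form $e^{t\lambda}\,p(t)$ with $p$ a polynomial of degree at most $r-1$. Taking absolute values, these entries behave like $e^{t\,\mathrm{Re}(\lambda)}\abs{p(t)}$. Since $\mathcal{L}=\bigoplus_{k=1}^m J_{d_k}(\lambda_k)$ in a suitable basis, the exponential $e^{t\mathcal{L}}$ is block-diagonal with blocks $e^{tJ_{d_k}(\lambda_k)}$, and its operator norm (with respect to the basis-induced norm, which is equivalent to the Hilbert-Schmidt norm on the finite-dimensional space $\M{d}$) is comparable to $\max_k\snorm{e^{tJ_{d_k}(\lambda_k)}}$.

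For part (i), I would argue by contraposition: if some $\mathrm{Re}(\lambda_k)>0$, then the norm of the corresponding block grows like $e^{t\,\mathrm{Re}(\lambda_k)}\to\infty$, contradicting the uniform bound $\snorm{e^{t\mathcal{L}}}\leq C$. For part (ii), suppose $\mathrm{Re}(\lambda_k)=0$ but $d_k\geq 2$; then the block $e^{tJ_{d_k}(\lambda_k)}$ has off-diagonal entries of the form $e^{it\,\mathrm{Im}(\lambda_k)}\cdot t$ (the degree-one term from $N$), whose modulus is exactly $t$ and hence is unbounded, again contradicting the uniform bound. So every purely imaginary eigenvalue must have only $1\times 1$ Jordan blocks. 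Part (iii) is the most elementary: if $\mathrm{Re}(\lambda_k)<0$, then each entry of $e^{tJ_{d_k}(\lambda_k)}$ has modulus bounded by $e^{t\,\mathrm{Re}(\lambda_k)}\abs{p(t)}$, and since exponential decay dominates polynomial growth, this tends to $0$ as $t\to\infty$, giving $\lim_{t\to\infty}e^{tJ_{d_k}(\lambda_k)}=0$.

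The only genuine subtlety — and the step I would be most careful about — is justifying the uniform boundedness of $\snorm{e^{t\mathcal{L}}}$ over all $t\geq 0$ and tying the operator norm cleanly to the per-block growth rates. The boundedness follows because each $e^{t\mathcal{L}}$ is unital and positive, hence norm-one as a map on $\M{d}$ (for instance via the Russo–Dye theorem or directly from positivity and unitality, using that a positive unital map satisfies $\snorm{\Phi}=\snorm{\Phi(I)}=1$); here all matrix norms on the finite-dimensional space $\B{\M{d}}$ are equivalent, so boundedness in one norm is boundedness in all, and in particular in whatever basis realizes the Jordan decomposition. With that in hand, parts (i) and (ii) are immediate from the growth/unboundedness of the offending blocks, and (iii) needs no boundedness hypothesis at all.
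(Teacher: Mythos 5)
Your proof is correct and follows essentially the same route as the paper: both arguments use the contractivity of the unital maps $e^{t\mathcal{L}}$ together with norm equivalence in finite dimensions to bound the matrix entries in the Jordan basis, then read off (i)--(iii) from the explicit entries $\frac{t^{j-i}}{(j-i)!}e^{t\lambda_k}$ of each block exponential. The only difference is presentational: you spell out the contradiction arguments that the paper compresses into ``Now (i)--(iii) follow easily.''
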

\begin{proof} 
 Assume that $\{e^{t\mathcal{L}}\}_{t\geq 0}$ is a quantum dynamical semigroup of unital maps. In particular, it is contractive
 for every $t$. Since any two norms on $\M{d}$ are equivalent, it follows that the matrix entries of $e^{t\mathcal{L}}$ are bounded in the Jordan basis of $\mathcal{L}$ as
 $t$ varies in   $[0, \infty).$ Fix $k$ and consider the $k$-th Jordan block. If $d_k>1$, write $J_{d_k}(\lambda _k)=\lambda I_{d_k}+N_{k}$, where $N_{k}$ is the non-zero nilpotent matrix of order $d_k$. Then,
    \begin{align*}
       e^{tJ_{d_k}(\lambda)}%&=e^{t\lambda I_{d_k}+N_k}\\
                         %&=e^{t\lambda I_{d_k}}e^{tN_k}\\
                         &=e^{t\lambda}I_{d_k}\big(I_{d_k}+tN_k+\frac{t^2N_k^2}{2!}+\cdots +\frac{t^{d_k-1}N_k^{d_k-1}}{(d_k-1)!}\big).
    \end{align*}
Hence
$$[e^{tJ_{d_k}(\lambda _k)}]_{ij}=\left\{\begin{array}{cl}
                               \frac{t^{j-i}}{(j-i)!}~e^{t\lambda _k} & \qquad i\leq j;\\
                               0 & \qquad i>j. 
                            \end{array}\right.$$
Now (i)-(iii) follow easily. 
\end{proof}
 
\begin{lem}\label{lem-ker-algebra} 
    Let $\mathcal{L}:\M{d}\to\M{d}$ be the generator of a quantum dynamical semigroup of unital channels. Then the space $\mcl{P}:=\lspan\{X\in\M{d}:\mathcal{L}(X)=ia X, \quad a\in \mathbb{R}\}$ is a unital $C^*$-sub algebra of $\M{d}$. 
\end{lem}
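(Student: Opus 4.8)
The plan is to reduce this continuous-time statement to the already-available discrete-time fact that the peripheral space of a trace-preserving unital quantum channel is a $C^*$-subalgebra (the result of \cite{BhSa23} invoked in the proof of Theorem \ref{asymptotic}). The bridge is the elementary observation that if $\mathcal{L}(X) = iaX$ with $a \in \mbb{R}$, then $\Phi_t(X) = e^{t\mathcal{L}}(X) = e^{iat}X$, so each such $X$ is an eigenvector of every $\Phi_t$ with a unimodular eigenvalue. Fixing any $t_0 > 0$, I would therefore prove that $\mcl{P}$ coincides exactly with the peripheral space $\mcl{P}(\Phi_{t_0})$ of the unital quantum channel $\Phi_{t_0}$, and then quote the discrete result.

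First I would set up the generalized-eigenspace decomposition of $\mathcal{L}$ coming from its Jordan form $\bigoplus_k J_{d_k}(\lambda_k)$, writing $\M{d} = \mcl{P} \oplus \mcl{N}$, where $\mcl{P}$ is the span of the eigenvectors attached to purely imaginary eigenvalues and $\mcl{N}$ is the span of the generalized eigenvectors attached to the remaining eigenvalues. Lemma \ref{lem-JCF-gena} does two things for me here: part (ii) guarantees that every Jordan block with $\mathrm{Re}(\lambda_k) = 0$ is one-dimensional, so $\mcl{P}$ is genuinely spanned by honest eigenvectors and $\mathcal{L}|_{\mcl{P}}$ is diagonalizable, while part (i) confines the remaining eigenvalues to $\mathrm{Re}(\lambda_k) < 0$ on $\mcl{N}$. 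Since the generalized eigenspaces of $\mathcal{L}$ are invariant under every polynomial in $\mathcal{L}$, and hence under $\Phi_{t_0} = e^{t_0 \mathcal{L}}$, both $\mcl{P}$ and $\mcl{N}$ are $\Phi_{t_0}$-invariant.

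Next I would read off the spectra of $\Phi_{t_0}$ on the two pieces. On $\mcl{P}$ the eigenvalues are $e^{iat_0}$, all of modulus $1$, which gives $\mcl{P} \subseteq \mcl{P}(\Phi_{t_0})$ immediately. On $\mcl{N}$ the eigenvalues are $e^{t_0\lambda_k}$ with $\mathrm{Re}(\lambda_k) < 0$, hence of modulus strictly less than $1$, so the spectral radius of $\Phi_{t_0}|_{\mcl{N}}$ is $< 1$. For the reverse inclusion, take any $X$ with $\Phi_{t_0}(X) = \mu X$ and $|\mu| = 1$; decomposing $X = X_P + X_N$ along the invariant splitting forces $\Phi_{t_0}(X_N) = \mu X_N$, which is incompatible with $|\mu| = 1$ unless $X_N = 0$. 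Hence every unimodular eigenvector lies in $\mcl{P}$, giving $\mcl{P}(\Phi_{t_0}) \subseteq \mcl{P}$ and therefore $\mcl{P} = \mcl{P}(\Phi_{t_0})$.

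Finally, since $\Phi_{t_0}$ is a trace-preserving unital quantum channel, the cited result of \cite{BhSa23} used in Theorem \ref{asymptotic} shows that $\mcl{P}(\Phi_{t_0})$ is a $C^*$-subalgebra of $\M{d}$; as $\mcl{P} = \mcl{P}(\Phi_{t_0})$, this finishes the proof. The step I expect to require the most care is precisely the equality $\mcl{P} = \mcl{P}(\Phi_{t_0})$: one must use Lemma \ref{lem-JCF-gena}(ii) to rule out generalized (non-diagonalizable) contributions at imaginary eigenvalues and Lemma \ref{lem-JCF-gena}(i) together with the $\Phi_{t_0}$-invariance of the splitting to confine all unimodular $\Phi_{t_0}$-eigenvalues to $\mcl{P}$. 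A naive identification that ignored the Jordan structure or the invariance of the decomposition would be the likely pitfall.
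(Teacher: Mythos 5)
Your proof is correct, but it takes a genuinely different route from the paper's. The paper argues directly at the level of eigenvectors: if $\mathcal{L}(X)=iaX$ and $\mathcal{L}(Y)=ibY$, then $X$ and $Y$ lie in the peripheral space of every $e^{t\mathcal{L}}$, hence in its multiplicative domain (via the Kadison--Schwarz inequality, as recalled just before the lemma), so $e^{t\mathcal{L}}(XY)=e^{t\mathcal{L}}(X)e^{t\mathcal{L}}(Y)=e^{it(a+b)}XY$ for all $t\geq 0$; differentiating at $t=0$ yields $\mathcal{L}(XY)=i(a+b)XY$, and closure under adjoints follows because $\mathcal{L}$ is Hermitian-preserving. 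That argument is short, avoids Jordan theory entirely, and gives the eigenvalue of the product explicitly. Your argument instead fixes $t_0>0$, uses Lemma \ref{lem-JCF-gena} to build the splitting $\M{d}=\mcl{P}\oplus\mcl{N}$ into eigenvectors at purely imaginary eigenvalues and generalized eigenvectors at eigenvalues with strictly negative real part, observes that both pieces are invariant under $\Phi_{t_0}=e^{t_0\mathcal{L}}$, identifies $\mcl{P}=\mcl{P}(\Phi_{t_0})$ by comparing spectra on the two pieces, and then quotes the result of \cite{BhSa23} that the peripheral space of a unital trace-preserving channel is a $C^*$-subalgebra. All of your steps check out: Lemma \ref{lem-JCF-gena}(ii) does force diagonalizability at imaginary eigenvalues, the generalized eigenspaces are indeed $e^{t_0\mathcal{L}}$-invariant, and the spectral radius of $\Phi_{t_0}\arrowvert_{\mcl{N}}$ being strictly less than $1$ correctly rules out any $\mcl{N}$-component of a unimodular eigenvector. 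What your route buys is the stronger identification $\mcl{P}=\mcl{P}(\Phi_{t_0})$ for each $t_0>0$, which is essentially the content the paper establishes separately afterwards in Theorem \ref{thm-Md-deco-cont}(i)--(ii); what it costs is dependence on the external discrete-time theorem of \cite{BhSa23} and on the Jordan-form Lemma \ref{lem-JCF-gena}, where the paper's own proof of this particular lemma needs only the multiplicative domain property and one derivative at $t=0$.
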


\begin{proof}
Clearly $\mcl{P}$ contains the identity. As $\mcl{L}$ is a Hermitian preserving map, $\mcl{P}$ is closed under taking adjoints. Now, to prove it is closed under multiplication,  let $X,Y\in\M{d}$ and $a,b\in\mbb{R}$ be such that $\mathcal{L}(X)=iaX$ and $\mathcal{L}(Y)=ibY$. Then  for all $t\geq 0$,
\begin{align*}
    e^{t\mathcal{L}}(X)=e^{ita}X \quad \text{and} \quad e^{t\mathcal{L}}(Y)=e^{itb}Y
\end{align*}
This shows that $X,Y$ is in the peripheral space of $e^{t\mathcal{L}}$. Since the peripheral space of $e^{t\mathcal{L}}$ lies in the multiplicative domain of $e^{t\mathcal{L}}$, we get that
\begin{align*}
e^{t\mathcal{L}}(XY)=e^{t\mathcal{L}}(X)e^{t\mathcal{L}}(Y)=e^{ita+itb}XY, ~~\forall t\geq 0.
\end{align*}
By differentiating both sides at $t=0$, we obtain $\mathcal{L}(XY)=i(a+b)XY$.  This completes the proof.
\end{proof}
The algebra $\mcl{P}$ is known as the {\em peripheral Poisson boundary\/} of $\mcl{L}$, or more appropriately that of the associated quantum dynamical semigroup. For more on this topic see \cite{BD25}.  The following is a one parameter semigroup version of Theorem 3.1 of \cite{BhSa23}. 

\begin{thm}\label{thm-Md-deco-cont} 
    Let $\{\Phi_t=e^{t\mcl{L}}\}_{t\geq 0}$ be a quantum dynamical semigroup of unital  channels on $\M{d}$. Take  $\mcl{P}=\lspan\{X\in\M{d}:\mathcal{L}(X)=ia X, ~\mbox{for some}~ a\in \mathbb{R} \}$. Then the following hold:
    \begin{enumerate}[label=(\roman*)]
        \item $\M{d}=\mcl{P}\oplus\mcl{N}$, where $\mcl{N}=\{Y: \lim_{t\to\infty}\Phi_t(Y)= 0\}$;
        \item  $\Phi _t$ leaves $\mcl{P}$ invariant for every $t$ and restricted to $\mcl{P}$,  $\{\Phi _t\}_{t\geq 0}$ form a semigroup of trace-preserving $*$-automorphisms;
        \item $\Phi _t=\alpha_t+\beta_t$, where $\alpha_t:=\Phi _t\circ\mbf{E}_{\mcl{P}}:\M{d}\to\mcl{P}\subseteq\M{d}$, where  $\mbf{E}_{\mcl{P}}$ is the unique trace-preserving conditional expectation onto $\mcl{P}$ and  $\beta_t:=\Phi _t-\alpha_t:\M{d}\to\mcl{N}\subseteq\M{d}$ satisfies $\lim_{t\to\infty}\beta_t=0$.
    \end{enumerate}
\end{thm}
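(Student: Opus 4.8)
The plan is to mirror the discrete decomposition of Theorem~\ref{asymptotic} (the one-parameter analogue of \cite[Theorem 3.1]{BhSa23}), reading off the relevant subspaces from the Jordan structure of the generator supplied by Lemma~\ref{lem-JCF-gena}. First I would fix a Jordan basis of $\mathcal{L}$ and split its spectrum into the purely imaginary eigenvalues $ia_k$ (which by Lemma~\ref{lem-JCF-gena}(ii) carry only one-dimensional blocks, so their generalized eigenspaces are genuine eigenspaces) and the eigenvalues with $\mathrm{Re}(\lambda_k)<0$. The span of the former is exactly $\mcl{P}$, while the sum $\mcl{N}'$ of the generalized eigenspaces attached to the latter is $\mathcal{L}$-invariant, hence $\Phi_t$-invariant, and satisfies $\Phi_t\arrowvert_{\mcl{N}'}\to 0$ by Lemma~\ref{lem-JCF-gena}(iii). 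This gives the algebraic splitting $\M{d}=\mcl{P}\oplus\mcl{N}'$ together with $\mcl{N}'\subseteq\mcl{N}$. The reverse inclusion, and thus $\mcl{N}=\mcl{N}'$ as claimed in (i), I would obtain once $\Phi_t$ is seen to act isometrically on $\mcl{P}$: writing $Y=Y_{\mcl{P}}+Y_{\mcl{N}'}$, if $\Phi_t(Y)\to 0$ then $\Phi_t(Y_{\mcl{P}})\to 0$, forcing $Y_{\mcl{P}}=0$.

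For (ii), observe that for every $t>0$ the eigenvalues of $\Phi_t=e^{t\mathcal{L}}$ have modulus $e^{t\,\mathrm{Re}(\lambda_k)}$, so the peripheral space of $\Phi_t$ coincides with $\mcl{P}$ for all $t$. Since a peripheral eigenvector lies in the multiplicative domain, $\mcl{P}\subseteq\msc{M}_{\Phi_t}$, so $\Phi_t$ restricted to $\mcl{P}$ is multiplicative; it is $*$-preserving (as $\Phi_t$ is Hermitian preserving) and bijective on $\mcl{P}$ (it is diagonal with nonzero entries $e^{ita_k}$ in the eigenbasis of $\mcl{P}$). By Lemma~\ref{lem-ker-algebra}, $\mcl{P}$ is a $C^*$-subalgebra, so $\Phi_t\arrowvert_{\mcl{P}}$ is a $*$-automorphism of $\mcl{P}$, trace-preserving because $\Phi_t$ is a channel, and the semigroup law restricts to $\mcl{P}$. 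Being a trace-preserving $*$-automorphism it preserves the Hilbert--Schmidt inner product, since $\tr(\Phi_t(X)^*\Phi_t(Y))=\tr(\Phi_t(X^*Y))=\tr(X^*Y)$; this is exactly the isometry statement needed to close the argument of the previous paragraph.

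The main obstacle is (iii), or rather the orthogonality $\mcl{P}\perp\mcl{N}$, which is what identifies $\mbf{E}_{\mcl{P}}$ (the trace-preserving conditional expectation, i.e.\ the Hilbert--Schmidt orthogonal projection onto the $C^*$-algebra $\mcl{P}$) with the spectral projection onto $\mcl{P}$ along $\mcl{N}$. I would prove it via the peripheral--adjoint relation: if $\mathcal{L}(X)=iaX$ then $\Phi_t^*(X)=e^{-ita}X$ for all $t$, since a Kadison--Schwarz estimate for the unital channel $\Phi_t^*$ gives $\norm{\Phi_t^*(X)-\overline{e^{ita}}X}^2\le 0$; differentiating at $t=0$ yields $\mathcal{L}^*(X)=-iaX$. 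Then for $X\in\mcl{P}$ and a generalized eigenvector $Y$ of $\mathcal{L}$ with $\mathrm{Re}(\lambda)<0$, the adjoint identity $\langle\mathcal{L}^*X,Y\rangle=\langle X,\mathcal{L}Y\rangle$ together with $\overline{(-ia)}=ia\ne\lambda$ forces $\langle X,Y\rangle=0$ (inducting along the Jordan chain). Alternatively one can sidestep this computation by applying the discrete decomposition \cite[Theorem 3.1]{BhSa23} to the single channel $\Phi_{t_0}$ for a fixed $t_0>0$ and checking $\mcl{P}(\Phi_{t_0})=\mcl{P}$ and $\mcl{N}(\Phi_{t_0})=\mcl{N}$. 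With orthogonality established, $\id-\mbf{E}_{\mcl{P}}$ is the projection onto $\mcl{N}$, so $\beta_t=\Phi_t\circ(\id-\mbf{E}_{\mcl{P}})$ maps into $\mcl{N}$ with $\norm{\beta_t}\le\norm{\Phi_t\arrowvert_{\mcl{N}}}\to 0$, while $\alpha_t=\Phi_t\circ\mbf{E}_{\mcl{P}}$ maps into $\mcl{P}$ by the invariance from (ii), completing (iii).
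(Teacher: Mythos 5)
Your proposal is correct and follows essentially the same route as the paper: part (i) from the Jordan structure of $\mathcal{L}$ via Lemma \ref{lem-JCF-gena}, part (ii) from the peripheral-space/multiplicative-domain argument combined with Lemma \ref{lem-ker-algebra} and trace preservation, and part (iii) by composing $\Phi_t$ with $\mbf{E}_{\mcl{P}}$ and $\id-\mbf{E}_{\mcl{P}}$. You are in fact more careful than the paper on one point: the paper's proof of (iii) silently identifies the Hilbert--Schmidt orthogonal projection $\mbf{E}_{\mcl{P}}$ with the spectral projection onto $\mcl{P}$ along $\mcl{N}$ (i.e., it uses $\mcl{P}\perp\mcl{N}$ without justification, the orthogonality being stated only for the discrete case via \cite{BhSa23}), whereas your Kadison--Schwarz adjoint argument giving $\mathcal{L}^*(X)=-iaX$ (or your alternative of applying \cite[Theorem 3.1]{BhSa23} to a single channel $\Phi_{t_0}$) supplies exactly this missing orthogonality.
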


\begin{proof}
   $(i)$ Using the Jordan decomposition of the generator $\mathcal{L}$ of $\{\Phi _t\}_{t
   geq 0}$, we write $\M{d}$ as a vector space direct sum:
    \begin{align*}
        \M{d}=\ker{\mathcal{L}-\lambda_1\id}^{\nu(\lambda_1)}\oplus \ker{\mathcal{L}-\lambda_2\id}^{\nu(\lambda_2)}\oplus\cdots\oplus \ker{\mathcal{L}-\lambda_n\id}^{\nu(\lambda_n)},
    \end{align*}
    where $\sigma(\mathcal{L})=\{\lambda_1,\lambda_2,\cdots,\lambda_n\}$ is the set of eigenvalues of $\mathcal{L}$ and $\nu(\lambda_j)$ is the algebraic multiplicity of the eigenvalue $\lambda_j$. From Lemma \ref{lem-JCF-gena}, if the real part of $\lambda_j=0$, all the Jordan blocks corresponding to the eigenvalue $\lambda_j$ are of dimension one and hence the algebraic multiplicity is the same as its geometric multiplicity. Therefore,
$$\mcl{P}=\bigoplus_{\lambda\in\sigma_{pi}(\mathcal{L})}\ker{\mathcal{L}-\lambda \id}^{\nu(\lambda)},$$
where $\sigma_{pi}(\mathcal{L})$ be the set of all purely imaginary eigenvalues of $\mcl{L}$. Suppose $\lambda_k\in\sigma(\mcl{L})$ be such that the real part of $\lambda_k\neq 0$, then again by Lemma \ref{lem-JCF-gena}, we have $e^{tJ_{d_k}(\lambda_k)}\to 0$  as $t\to\infty$. Thus, for  $Y\in\ker{\mathcal{L}-\lambda_k \id}^{\nu(\lambda_k)}$, we have $\Phi _t(Y)\to 0$ as $t\to\infty$. By letting $\mcl{N}:=\bigoplus_{\lambda\in\sigma(\mathcal{L})\setminus \sigma_{pi}(\mathcal{L})}\ker{\mathcal{L}-\lambda\id}^{\nu(\lambda)}$, we get the required decomposition. Note that as $\Phi _t$  is contractive in Hilbert-Schmidt norm and $\mcl{P}$ is the span of peripheral eigenvectors, it is reducing for $\Phi _t$. Consequently, $\M{d}=\mcl{P}\oplus\mcl{N}$, is an orthogonal direct sum with respect to the Hilbert-Schmidt inner product.\\
    $(ii)$ Let $X\in\M{d}$ be such that $\mathcal{L}(X)=ia X$, where $a\in \mathbb{R}$. Then, it is easy to see that $\Phi _t(X)=e^{iat}X$ for all $t\geq 0$. This shows that $\mcl{P}$ is in the peripheral space of $\Phi _t$. For quantum channels the peripheral space is a subset of the multiplicative domain.  So we see that the map $\Phi _t\arrowvert_{\mcl{P}}:\mcl{P}\to\mcl{P}$ is a $*$-homomorphism, for every $t\geq 0$.   Since as a linear map $\Phi _t$ has the inverse $e^{-t\mcl{L}}$, on ${\mcl{P}}$, it is injective and due to finite dimensionality it is also a bijection. Therefore restricted
     to $\mcl{P},$ the maps $\{ \Phi _t\}_{t\geq 0}$ form a semigroup of automorphisms.  (This is actually a special case  of a more general result, see Theorem 4.8 of \cite{BD25}.)\\
    $(iii)$ For every $t\geq 0$, let $\alpha_t:=\Phi _t\circ\mbf{E}_{\mcl{P}}$ and $\beta_t:= \Phi _t-\alpha_t$. Given $Z\in\M{d}$, decompose it as $Z=X+Y$ with $X\in\mcl{P}$ and $Y\in\mcl{N}$. Then for all $t\geq 0$,
    $$\beta_t(Z)=\Phi _t(Z)-\alpha_t(Z)=\Phi _t(X)+\Phi _t(Y)-\Phi_t\circ\mbf{E}_{\mcl{P}}(Z)=\Phi _t(Y).$$
    Since $\Phi _t(Y)\to 0$ as $t\to\infty$, we have $\beta_t(\M{d})\subseteq\mcl{N}$ for all $t\geq 0$ and $\norm{\Phi_t-\alpha_t}=\norm{\beta_t}\to 0$ as $t\to\infty$.
\end{proof}

%We note that part of (i) of the previous theorem does not require $\Phi _t$ to be trace-preserving. 

%\textcolor{blue}{We also call $\mcl{P}=\lspan\{X\in\M{d}:\mathcal{L}(X)=ia X, ~\mbox{for some}~ a\in \mathbb{R} \}$ as the \it{peripheral space} of the semigroup $e^{t\mcl{L}}$.}
Recall that {\em decoherence-free $*$-subalgebra} of the dynamical semigroup $\{\Phi_t\}_{t\geq 0}$ of unital channels is the largest $*$-algebra $\mcl{D}$ such that for every $X\in\mcl{D}$ we have
\begin{align*}
\Phi_t(X^*X)=\Phi_t(X^*)\Phi_t(X)\qquad\text{and}\qquad \Phi_t(XX^*)=\Phi_t(X)\Phi_t(X^*), \forall t\geq 0.
\end{align*}
Therefore, $\mcl{D}=\bigcap_{t\geq 0}\msc{M}_{\Phi_t}$, where $\msc{M}_{\Phi}$ denotes the multiplicative domain of $\Phi$.
Furthermore, if the generator $\mcl{L}$ is given by \ref{GKLS}, then from \cite{FR, DFR}, $\mcl{D}$ is equal to the commutator of  the set
\begin{align*}
    \{L_j,L_j^*,[H,L_j],[H,L_j^*],[H,[H,L_j]],[H,[H,L_j^*]],\cdots |1\leq j\leq n\}.
\end{align*}
The results in \cite{FR, DFR} are applicable for quantum dynamical semigroups on $\msc{B}(\mathcal{H})$, where $\mathcal{H}$ is a separable Hilbert space, possibly infinite dimensional. We wish to show that in our context of unital quantum channels the decoherence-free $*$-subalgebra is nothing but the peripheral Poisson boundary and can be described in simpler ways.

%\textcolor{red}{I claim that $\mcl{D}=\mcl{P}= \lspan \{X: i[H,X]=iaX, a\in \mathbb{R}, \mcl{L}_1(X)=0\}$ where $\mcl{L}(Y)= i[H,Y]+\mcl{L}_1(Y), ~\forall Y.$}
\begin{lem}\label{lem-automor-peripheral}
    Let $\{\Phi_t\}_{t\geq 0}$ be a quantum dynamical semigroup of unital channels on $\M{d}$. Then, $\Phi_t(\mcl{D})\subseteq\mcl{D}$ and $\Phi_t\arrowvert_{\mcl{D}}$ is a  $*$-automorphism.
\end{lem}
\begin{proof}
     Suppose $X\in\mcl{D}=\bigcap_{t\geq 0}\msc{M}_{\Phi_t}$ then
\begin{align*}
    \Phi_{t+s}(X^*X)=\Phi_t\big(\Phi_s(X^*X)\big)&=\Phi_t\big(\Phi_s(X^*)\Phi_s(X)\big)\\
    &\geq\Phi_t\big(\Phi_s(X^*)\big)\Phi_t\big(\Phi_s(X)\big)\\
    &=\Phi_{t+s}(X^*)\Phi_{t+s}(X)=\Phi_{t+s}(X^*X),~~~~~~~ \forall s,t\geq 0.
\end{align*}
This shows that $\Phi_t\big(\Phi_s(X^*)\Phi_s(X)\big)=\Phi_t\big(\Phi_s(X^*)\big)\Phi_t\big(\Phi_s(X)\big)$ and consequently, we get $\Phi_s(X)\in\mcl{D}$, for all $s\geq 0$. Since $\mcl{D}\subseteq\msc{M}_{\Phi_s}$,  $\Phi_s$ is a $*$-homomorphism on $\mcl{D}$, for all $s\geq 0$. Now, to prove that it is bijective it is enough to show that $\Phi_s$ is injective. For this, let $X\in\mcl{D}$ such that $\Phi_s(X)=0$ then
\begin{align*}
\ip{X,X}=\tr(X^*X)=\tr\big(\Phi_s(X^*X)\big)=\tr\big(\Phi_s(X^*)\Phi_s(X)\big)=0.
\end{align*}
Hence $X=0.$ This completes the proof.
\end{proof}
\begin{thm}\label{thm-decoh-peri}
Let $\{\Phi_t=e^{t\mcl{L}}\}_{t\geq 0}$ be a quantum dynamical semigroup of unital channels on $\M{d}$ with $\mcl{L}$ as in (\ref{GKLS}). Then, 
the following spaces are equal: \begin{itemize}
\item The peripheral Poisson boundary $\mcl{P}:= \lspan\{X\in\M{d}:\mcl{L}(X)=iaX, \text{ for some }a\in\mbb{R}\}.$
\item  The decoherence-free algebra $\mcl{D}:=\bigcap_{t\geq 0}\msc{M}_{\Phi_t}$.
\item  $\mcl{C}:=\lspan \{X\in \M{d}: i[H, X]=iaX , [L_j, X]=[L_j^*, X]=0, 1\leq j\leq n, \text{ for some } a\in \mbb{R}\}. $
\end{itemize}
\end{thm}
\begin{proof}
By virtue of Theorem \ref{thm-Md-deco-cont}, to show that $\mcl{D}=\mcl{P}$, it is enough to prove that $\mcl{D}^{\perp}=\{Y\in\M{d}: \lim_{t\to\infty}\Phi_t(Y)=0\}$. Suppose $Y\in\M{d}$ satisfies $\lim_{t\to\infty}\Phi_t(Y)=0$. Then for any $X\in\mcl{D}$, any $t\geq 0$, we have
\begin{align*}
\abs{\ip{X,Y}}=\abs{\tr(X^*Y)}=\abs{\tr(\Phi_t(X^*Y))}=\abs{\tr(\Phi_t(X)^*\Phi_t(Y))}\leq d\norm{\Phi_t(X)}\norm{\Phi_t(Y)}\leq d\norm{X} \norm{\Phi_t(Y))}
\end{align*}
This shows that $\ip{X,Y}=0$ and hence $Y\in\mcl{D}^{\perp}$. Now, to prove the reverse inclusion, let $Y\in\mcl{D}^{\perp}$. As the set $\{\Phi_t(Y)\}_{t\geq 0}$ is bounded,  there exists  $\{t_n\}\subset [0,\infty )$, converging to infinity such that $\lim_{n\to\infty}\Phi_{t_n}(Y)=Y_0,$ for some $Y_0\in\M{d}$.
%From Kadison-Schwarz inequality,  we have $\Phi_s\big(\Phi_t(Y^*)\Phi_t(Y)\big)-\Phi_{s+t}(Y^*)\Phi_{s+t}(Y)\geq 0$, for any $t,s\geq 0$. By taking $t=t_n$ and passing to the limit, we get  $\Phi_s(Y_0^*Y_0)-\Phi_s(Y_0^*)\Phi_s(Y_0)\geq 0$, for all $s\geq 0$ \textcolor{red}{This is true directly by Kadison-Schwarz}\todo{Yes sir. You are right.}.  
Note that as $\Phi _t$ is contractive in Hilbert-Schmidt norm,
%\begin{align*}
%    \norm{\Phi_{t+s}(Y)}_2^2=\tr\big(\Phi_{t+s}(Y^*)\Phi_{t+s}(Y)\big)
%    \leq \tr\big(\Phi_t(\Phi_s(Y^*)\Phi_s(Y))\big) =\tr(\Phi_s(Y^*)\Phi_s(Y))=\norm{\Phi_s(Y)}_2^2,
%\end{align*}
%for all $t,s\geq 0$. 
 $\{\norm{\Phi_t(Y)}_2^2\}_{t\geq 0}$ is non-increasing. Consequently $\lim _{n\to \infty}\|\Phi _{s+t_n}(Y)\|_2^2$  $=\|Y_0\|_2^2$ for every $s$. Now, by Kadison-Schwarz inequality, $\Phi_s(Y_0^*Y_0)-\Phi_s(Y_0^*)\Phi_s(Y_0)\geq 0$, and
\begin{align*}
    0=\lim_{n\to\infty}(\norm{\Phi_{t_n}(Y)}_2^2-\norm{\Phi_{s+t_n}(Y)}_2^2)&=\lim_{n\to\infty}\big(\tr(\Phi_{t_n}(Y^*)\Phi_{t_n}(Y))-\tr(\Phi_{s+t_n}(Y^*)\Phi_{s+t_n}(Y))\big)\\
    &=\lim_{n\to\infty}\Big(\tr\big(\Phi_s(\Phi_{t_n}(Y^*)\Phi_{t_n}(Y))\big)-\tr(\Phi_{s+t_n}(Y^*)\Phi_{s+t_n}(Y))\Big)\\
    &=\tr\big(\Phi_s(Y_0^*Y_0)-\Phi_s(Y_0^*)\Phi_s(Y_0)\big).
\end{align*}
So we get $\Phi_s(Y_0^*Y_0)=\Phi_s(Y_0^*)\Phi_s(Y_0)$, for all $s\geq 0$. Similarly, one can show that  $\Phi_s(Y_0Y_0^*)=\Phi_s(Y_0)\Phi_s(Y_0^*)$, for all $s\geq 0$. Therefore, $Y_0\in\mcl{D}$. Now, we will also show that $Y_0\in \mcl{D}^{\perp}$. To see this first observe that from Lemma \ref{lem-automor-peripheral}, $\Phi_t$ restricted to $\mcl{D}$ is bijective. Given $t\geq 0$ and $Z\in\mcl{D}$, let $X\in\mcl{D}$ be the unique element such that $\Phi_t(X)=Z$, then
\begin{align*}
    \ip{Z,\Phi_t(Y)}=\ip{\Phi_t(X),\Phi_t(Y)}=\tr(\Phi_t(X^*)\Phi_t(Y))=\tr(\Phi_t(X^*Y))=\ip{X,Y}=0. 
\end{align*}
This shows that $\Phi_t(Y)\in \mcl{D}^{\perp}$ for every $t\geq 0$ and hence $Y_0=\lim_{n\to\infty}\Phi_{t_n}(Y)\in\mcl{D}^{\perp}$. Therefore $Y_0=0$. Now, since $\{\norm{\Phi_t}_2\}_{t\geq 0}$ is non-increasing and $\lim_{n\to\infty}\Phi_{t_n}(Y)=0$, we conclude that $\lim_{t\to\infty}\Phi_t(Y)=0$. This completes the proof of $\mcl{D}=\mcl{P}$.

Observe that for any $Z\in\M{d}$, by direct computation:
\begin{align}\label{eq-Gkls-dec}
    \mcl{L}(Z^*Z)=Z^*\mcl{L}(Z)+\mcl{L}(Z^*)Z+\sum_{j=1}^n[L_j,Z]^*[L_j,Z].
\end{align}
    Suppose $X\in\mcl{P}$ is such that  $\mcl{L}(X)=iaX$ with $a\in \mathbb{R}$. Then $\Phi _t(X)=e^{ita}X $. Since we have assumed that $\Phi_t$ is trace preserving, by Kadison-Schwarz inequality, we conclude that $\Phi _t(X^*X)=X^*X$ and $\Phi _t(XX^*)=XX^*$, for all $t\geq 0$. By differentiation at $t=0$, we get $\mcl{L}(X^*X)=0=\mcl{L}(XX^*)$. Thus, by letting $Z=X$ in  equation \eqref{eq-Gkls-dec}, we get $\sum_{j=1}^n[L_j,X]^*[L_j,X]=0$, and hence $[L_j,X]=0$ for every $j$. Similarly by taking taking $Z=X^*$, we get $[L_j,X^*]=0$ for every $j$. Hence, 
$iaX=\mcl{L}(X)=i[H,X]$, proving $\mcl{P}\subseteq \mcl{C}.$ The reverse inclusion is clear from 
the formula (\ref{GKLS}) of $\mcl{L}$.
\end{proof}
The following theorem is the continuous analogue of Theorem \ref{asymptotic}. The conditional automorphism under consideration may not be unique. It depends upon the self-adjoint operator $H$ chosen in (\ref{GKLS}).
\begin{thm}\label{thm-asymptotic-conti}
    Let $\{\Phi_t=e^{t\mcl{L}}\}$ be a dynamical semigroup of unital channels on $\M{d}$ with the generator $\mcl{L}$  given by  \eqref{GKLS}. That is, 
\begin{equation*}
\mathcal{L}(X)=i[H,X]-\frac{1}{2}\sum _{j=1}^n(L_j^*L_jX+XL_j^*L_j-2L_j^*XL_j),~~\forall X\in \M{d},
\end{equation*}
 for some $H, L_j, 1\leq j\leq n$ in $\M{d}, ~n\geq 0,$ satisfying $H=H^*, ~\sum _{j=1}^nL_jL_j^*=\sum _{j=1}^nL_j^*L_j.$ 
    Then
    \begin{align*}
        \lim_{t\to\infty}\norm{\Phi_t- \mathrm{Ad}_{U_t}\circ \mbf{E}_{\mcl{P}}}=0,
    \end{align*}
    where $U_t=e^{-itH}, ~~\forall t\geq 0$ and $\mbf{E}_{\mcl{P}}$ is the conditional expectation map onto the peripheral Poisson boundary $\mcl{P}$.
    % \textcolor{red}{The notation $\alpha _t$ did not match with $\alpha ^n$ of discrete time.}
\end{thm}
\begin{proof}  
 Suppose 
$X$ is such that $i[H,X]=iaX$ for some $a\in \mbb{R}$ and $[L_j, X]=[L_j^*,X]=0$ for all $j$. Then clearly,
$\mcl{L}(X)=iaX.$
Consequently, by taking the power series expansion of the exponential, 
we get  $e^{ita}X=e^{t\mcl{L}}(X)= \mathrm{Ad}_{U_t}(X)$. Making use of Theorem \ref{thm-decoh-peri},
we conclude that $\Phi _t(X)= \mathrm{Ad}_{U_t}(X)$, for all $X\in\mcl{P}$. Then for any $Z=X+Y$ with $X\in\mcl{P}$ and $Y\in\mcl{P}^\perp$, by Theorem \ref{thm-Md-deco-cont},
we have
    \begin{align*}
        \big(\Phi_t-\mathrm{Ad}_{U_t}\circ\mbf{E}_{\mcl{P}}\big)(Z)=\Phi_t(X)+\Phi_t(Y)-\mathrm{Ad}_{U_t}(X)=\Phi_t(Y)\to 0, ~~~ \mbox{as } t\to\infty.
    \end{align*}
\end{proof}
\begin{cor}
     Let $\{\Phi_t=e^{t\mcl{L}}\}$ be a dynamical semigroup of unital channels on $\M{d}$ with the generator $\mcl{L}$ as in  \eqref{GKLS}.  If $H$ is a scalar matrix, then $\Phi _t$ converges to the conditional expectation $\mbf{E}_{\mcl{P}}$ as $t$ goes to infinity.
\end{cor}
\begin{proof}
    Assume that $H$ is a scalar matrix and $U_t=e^{-itH}$, then $\mathrm{Ad}_{U_t}(X)=X$, for all $X\in\M{d}$ and thus from Theorem \ref{thm-asymptotic-conti}, $\Phi_t$ converges to a conditional expectation onto the peripheral space $\mcl{P}$.
\end{proof}
The converse of this result is not true as the following remark shows.
\begin{rmk}
A dynamical semigroup of unital channels  $\{\Phi_t\}_{t\geq 0}$ converges to $\mbf{E}_{\mcl{P}}$, as $t\to \infty$ does not imply that  $H$ is a scalar matrix. For example, take  $\mcl{L}(X)=i[H,X]+\frac{1}{2}(2L^*XL-LL^*X-XLL^*)$, where $L$ is a normal matrix such that $L+L^*$ is not a scalar matrix and $H=L+L^*$. Note that, if $X\in\mcl{P}$, using the same argument as in Theorem \ref{thm-asymptotic-conti}, we conclude that $[L_j,X]=0=[L_j^*,X]$ and consequently  $\mcl{L}(X)=i[H,X]=i[L_j+L_j^*,X]=0$.
Now, by taking the power series, we get $\Phi_t(X)=X$ for all $t\geq 0$. By writing any $Z=X+Y$ with $X\in\mcl{P}$ and $Y\in\mcl{N}$, we have
 \begin{align*}
\Phi_t(Z)=\Phi_t(X)+\Phi_t(Y)=X+\Phi_t(Y)=\mbf{E}_{\mcl{P}}(Z)+\Phi_t(Y)\to\mbf{E}_{\mcl{P}}(Z),~~~~\mbox{as } t\to\infty.
 \end{align*}
\end{rmk}

\begin{thm}\label{thm-semigp-eve-MU}
    Every quantum dynamical semigroup of unital channels on $\M{d}$ is eventually mixed unitary.
\end{thm}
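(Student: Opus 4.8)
The plan is to run the continuous-time analogue of the argument behind Theorem \ref{thm-even-mu-discrete}, with Theorem \ref{thm-Md-deco-cont} in the role that Theorem \ref{asymptotic} played there. Write $\mcl{L}$ for the generator and set $\mcl{P}=\lspan\{X\in\M{d}:\mcl{L}(X)=iaX,\ a\in\mbb{R}\}$; by Lemma \ref{lem-ker-algebra} this is a unital $C^*$-subalgebra (it contains $I_d$ since $\mcl{L}(I_d)=0$). Theorem \ref{thm-Md-deco-cont} then supplies the decomposition $\M{d}=\mcl{P}\oplus\mcl{N}$ together with $\Phi_t=\alpha_t+\beta_t$, where $\alpha_t=\Phi_t\circ\mbf{E}_{\mcl{P}}$, $\beta_t=\Phi_t-\alpha_t$, and $\norm{\beta_t}\to 0$ as $t\to\infty$; moreover each $\Phi_t$ restricts to a trace-preserving $*$-automorphism of $\mcl{P}$.

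For each fixed $t\ge 0$ I would apply Theorem \ref{thm-stru-automor-finite} to the trace-preserving $*$-automorphism $\Phi_t\arrowvert_{\mcl{P}}$ to obtain a unitary $U_t\in\mbb{U}(d)$ with $\Phi_t(X)=\mathrm{Ad}_{U_t}(X)$ for all $X\in\mcl{P}$, and then set $\tau_t:=\mathrm{Ad}_{U_t^*}\circ\Phi_t$, a unital quantum channel. For $X\in\mcl{P}$ one gets $\tau_t(X)=\mathrm{Ad}_{U_t^*}(\mathrm{Ad}_{U_t}(X))=X$, so $\tau_t\in\mcl{F}(\mcl{P})$. Since $\mbf{E}_{\mcl{P}}$ maps into $\mcl{P}$, the same computation yields $\mathrm{Ad}_{U_t^*}\circ\alpha_t=\mathrm{Ad}_{U_t^*}\circ\Phi_t\circ\mbf{E}_{\mcl{P}}=\mbf{E}_{\mcl{P}}$, hence
\[
\tau_t-\mbf{E}_{\mcl{P}}=\mathrm{Ad}_{U_t^*}\circ\beta_t .
\]
As $\mathrm{Ad}_{U_t^*}$ is an isometric $*$-automorphism of $\M{d}$, it preserves the norm of linear maps, so $\norm{\tau_t-\mbf{E}_{\mcl{P}}}=\norm{\beta_t}\to 0$. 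Note this estimate does not depend on the (non-canonical) choice of $U_t$, so no continuity or group property of $t\mapsto U_t$ is required.

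The conclusion then follows from Theorem \ref{thm-cond-exp-mu}: let $r_{\mcl{P}}>0$ be the constant it provides for the subalgebra $\mcl{P}$, and choose $t_0$ with $\norm{\beta_t}\le r_{\mcl{P}}$ for all $t\ge t_0$. For such $t$ we have $\tau_t\in\mcl{F}(\mcl{P})$ and $\norm{\tau_t-\mbf{E}_{\mcl{P}}}\le r_{\mcl{P}}$, so $\tau_t$ is mixed unitary, say $\tau_t=\sum_j\lambda_j\mathrm{Ad}_{W_j}$. Then $\Phi_t=\mathrm{Ad}_{U_t}\circ\tau_t=\sum_j\lambda_j\mathrm{Ad}_{W_jU_t}$ is mixed unitary for every $t\ge t_0$, which is exactly the assertion.

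I expect the only genuinely delicate point to be the bookkeeping around the implementing unitaries $U_t$. In the discrete case a single $U$ sufficed because $U^n$ automatically implements $\Phi^n\arrowvert_{\mcl{P}}$; here the group structure of $\{\Phi_t\}$ is not transferred to the $U_t$ by Theorem \ref{thm-stru-automor-finite}, so one must either argue (as above) that a per-$t$ choice is harmless — which is valid precisely because $\norm{\tau_t-\mbf{E}_{\mcl{P}}}=\norm{\beta_t}$ is blind to the choice — or, if a canonical family is preferred, note that $\{\Phi_t\arrowvert_{\mcl{P}}\}$ is a continuous one-parameter group of $*$-automorphisms of $\mcl{P}$ whose generator is an inner $*$-derivation $i[H_0,\,\cdot\,]$ with $H_0=H_0^*\in\mcl{P}$, and take $U_t=e^{-itH_0}$. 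Everything else is a routine transcription of the discrete argument.
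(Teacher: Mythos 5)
Your proof is correct and follows essentially the same route as the paper's own argument: the decomposition $\Phi_t=\alpha_t+\beta_t$ from Theorem \ref{thm-Md-deco-cont}, a unitary $U_t$ from Theorem \ref{thm-stru-automor-finite} implementing $\Phi_t\arrowvert_{\mcl{P}}$, the observation that $\tau_t:=\mathrm{Ad}_{U_t^*}\circ\Phi_t$ lies in $\mcl{F}(\mcl{P})$ with $\norm{\tau_t-\mbf{E}_{\mcl{P}}}=\norm{\beta_t}\to 0$, and then Theorem \ref{thm-cond-exp-mu}. Your added remark that the per-$t$ choice of $U_t$ requires no continuity or group structure is a point the paper leaves implicit, and it is handled correctly.
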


\begin{proof}
    Let $\{\Phi_t\}_{t\geq 0}$ be a quantum dynamical semigroup of unital  channels on $\M{d}$.  We use the notation and results of  Theorem \ref{thm-Md-deco-cont} and Theorem \ref{thm-asymptotic-conti}. From the proof of Theorem \ref{thm-asymptotic-conti}, we know that 
    $$\Phi _t\arrowvert_{\mcl{P}}=\mathrm{Ad}_{U_t}\arrowvert_{\mcl{P}},$$
    where $U_t=e^{-itH}$.
 Take $\tau_t:=\mathrm{Ad}_{U_t^*}\circ \Phi_t$, then
 \begin{align*}
     \tau_t(X)&=\mathrm{Ad}_{U_t^*}\circ \Phi _t(X)
     =\mathrm{Ad}_{U_t^*}\circ \mathrm{Ad}_{U_t}(X)=X,~~~~\forall X\in\mcl{P}.
 \end{align*}
%This shows that $\tau_t\in\mcl{F}(\mcl{P})$ for all $t\geq 0$. 
 From Theorem \ref{thm-Md-deco-cont}, we also know that $\Phi_t=\alpha_t+\beta_t$,  where $\alpha_t=\Phi_t\circ\mbf{E}_{\mcl{P}}$ and  $\lim_{t\to\infty}\beta_t=0$. Therefore, $\mathrm{Ad}_{U_t^*}\circ\alpha_t=\mbf{E}_{\mcl{P}}$ for all $t\geq 0$, and 
\begin{align*}
    \norm{\beta_t}&=\norm{\Phi _t-\alpha_t}=\norm{\mathrm{Ad}_{U_t^*}\circ(\Phi _t-\alpha_t)}=\norm{\tau_t-\mbf{E}_{\mcl{P}}}.
\end{align*}
By choosing $r_{\mcl{P}}>0$  as in Theorem \ref{thm-cond-exp-mu}, there exists a $t_0\geq 0$ such that $\norm{\tau_t-\mbf{E}_{\mcl{P}}}\leq r_{\mcl{P}}$, for all $t\geq t_0$. Now, by Theorem \ref{thm-cond-exp-mu}, it follows that $\tau_t$ is mixed unitary for all $t\geq t_0$ and consequently, $\Phi _t=\mathrm{Ad}_{U_t}\circ\tau_t$ is mixed unitary for all $t\geq t_0$.
\end{proof}

We now briefly return to the case of discrete semigroups. The proof of the following Remark is very similar to the proof of  Theorem \ref{thm-decoh-peri} and so we omit it.
\begin{rmk}
Let $\Phi$ be a unital channel on $\M{d}$. Consider the discrete semigroup of unital channels $\{\Phi^n: n\geq 1\}$, then its
decoherence free algebra is same as the peripheral Poisson boundary. That is, $\bigcap_{n\geq 1}\msc{M}_{\Phi^n}=\lspan\{X:\Phi(X)=\lambda X,~\abs{\lambda}=1\}.$ 
\end{rmk}

\begin{defn} \cite{HRS20}
 For any unital trace-preserving linear map $\Phi:\M{d}\to\M{d}$,  the \emph{mixed unitary time } or \emph {mixed unitary index } of $\Phi$ is defined as the least $n\in\mbb{N}$ such that $\Phi^k$ is mixed unitary for all $k\geq n$.  
\end{defn}

From Theorem \ref{thm-even-mu-discrete}, it follows that the mixed unitary index of a unital quantum channel is finite. One can ask whether the mixed unitary index is independent of $\Phi$. i.e., Is there an integer $n$ such that for every unital quantum channel $\Phi$ on $\M{d}$,  $\Phi^k$ is mixed unitary, $\forall k\geq n$.  The answer is negative as the following result shows. % That is, there is no universal power $n$  such that $\Phi^n$  mixed unitary for all unital quantum channels $\Phi$ on $\M{d}$.  

\begin{thm}\label{thm-No-univ-MUindex}
    Let $d\geq 3$, then there is no $n$ such that $\Phi^n$ is mixed unitary for all unital quantum channels $\Phi:\M{d}\to\M{d}$. 
\end{thm}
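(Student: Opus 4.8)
The plan is to use continuous semigroups as a machine for manufacturing $n$-th roots. To refute a candidate universal exponent $n$ it is enough to exhibit one unital quantum channel $\Phi$ on $\M{d}$ with $\Phi^n$ not mixed unitary, and a quantum dynamical semigroup $\{\Phi_t\}_{t\geq 0}$ of unital channels that fails to be mixed unitary at some time $t_1>0$ delivers such a $\Phi$ for free: taking $\Phi=\Phi_{t_1/n}$, the semigroup law gives $\Phi^n=\Phi_{t_1}$, which is not mixed unitary. Consequently the whole statement reduces to producing, for each $d\geq 3$, a quantum dynamical semigroup of unital channels on $\M{d}$ that is not mixed unitary for at least one positive time.

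I would first dispose of the base dimension $d=3$. By Theorem \ref{thm-mu-semi-gene}, a semigroup of unital channels is mixed unitary for all times exactly when its generator sits in the closed cone generated by $\{\mathrm{Ad}_U-\id:U\in\mbb{U}(3)\}$, equivalently when the operators $L_j$ in the expansion \eqref{generator} may be chosen Hermitian or scalar multiples of unitaries; a generator violating this lies outside the cone and hence generates a semigroup that is not mixed unitary for some $t_1>0$. The K\"ummerer--Maassen semigroup of Example \ref{eg-semigp-not-mu-kumm} is of this kind, so it supplies the required $\{\Phi^{(3)}_t\}$ on $\M{3}$ together with a time $t_1$ at which $\Phi^{(3)}_{t_1}$ is not mixed unitary. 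For arbitrary $n$ the channel $\Phi=\Phi^{(3)}_{t_1/n}$ then satisfies $\Phi^n=\Phi^{(3)}_{t_1}$, settling $d=3$.

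For $d>3$ I would lift this example by a block construction. Writing $\mbb{C}^d=\mbb{C}^3\oplus\mbb{C}^{d-3}$, let $\Psi=\Phi^{(3)}_{t_1/n}$ and define $\Phi$ to act as $\Psi$ on the upper-left $\M{3}$ corner, as the identity on the lower-right $\M{d-3}$ corner, and to annihilate off-diagonal blocks, i.e. $\Phi\big(\Matrix{A&B\\ C&D}\big)=\Matrix{\Psi(A)&0\\ 0&D}$; with Kraus operators $V_j\oplus 0$ (from $\Psi$) together with $0\oplus I_{d-3}$ one checks $\Phi$ is a genuine unital quantum channel. Since $\Phi$ kills the off-diagonal corners, iterating keeps everything block-diagonal and $\Phi^n\big(\Matrix{A&B\\ C&D}\big)=\Matrix{\Phi^{(3)}_{t_1}(A)&0\\ 0&D}$. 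The crux is to show this $\Phi^n$ is not mixed unitary.

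For that last point I would invoke the fixed-point description of mixed unitary channels. The fixed-point space of $\Phi^n$ is $\mathrm{Fix}(\Phi^{(3)}_{t_1})\oplus\M{d-3}$ and in particular contains $0\oplus\M{d-3}$. If $\Phi^n=\sum_k\lambda_k\mathrm{Ad}_{U_k}$ with all $\lambda_k>0$, then since $\Phi^n$ is doubly stochastic every fixed point $X$ lies in its multiplicative domain (from $\Phi^n(X^*X)\geq\Phi^n(X)^*\Phi^n(X)=X^*X$ together with equality of traces), and the equality case of the operator Cauchy--Schwarz inequality forces each $U_k$ to commute with $X$. Thus every $U_k$ commutes with $0\oplus\M{d-3}$, so $U_k\in\M{3}\oplus\mbb{C}I_{d-3}$, say $U_k=P_k\oplus c_kI_{d-3}$ with $P_k\in\mbb{U}(3)$; restricting $\sum_k\lambda_k\mathrm{Ad}_{U_k}$ to the $\M{3}$ corner yields $\Phi^{(3)}_{t_1}=\sum_k\lambda_k\mathrm{Ad}_{P_k}$, a mixed unitary channel on $\M{3}$, contradicting the choice of $t_1$. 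The main obstacle is exactly this non-mixed-unitarity of the lift; everything else is bookkeeping once the three-dimensional non-mixed-unitary semigroup is available. The two delicate inputs are the commutant computation and the fact that fixed points of a doubly stochastic map lie in its multiplicative domain (Kadison--Schwarz plus trace preservation), after which the Cauchy--Schwarz equality analysis pins down the unitaries.
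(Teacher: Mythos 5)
Your proposal is correct, and its engine is exactly the paper's: to refute a universal exponent $n$, take a quantum dynamical semigroup $\{\Phi_t\}_{t\geq 0}$ of unital channels that is not mixed unitary at some $t_1>0$, set $\Phi:=\Phi_{t_1/n}$, and observe $\Phi^n=\Phi_{t_1}$. The paper's proof of Theorem \ref{thm-No-univ-MUindex} is precisely this three-line contradiction, with the required semigroroup supplied in dimension $d$ by Example \ref{eg-semigp-not-mu-kumm} (valid for all $d\geq 3$, since one can pick $A\in\M{d}$ whose spectrum lies on no circle or line) or, for $d=3$, by Lemma \ref{lem-example-not mixed unitary semigroup} and Example \ref{eg-semigrp-not-mixed unitary}. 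Where you genuinely diverge is the case $d>3$: instead of invoking a non-mixed-unitary semigroup on $\M{d}$ itself, you lift the $d=3$ example through the channel $\Phi\big(\SMatrix{A&B\\ C&D}\big)=\SMatrix{\Psi(A)&0\\ 0&D}$ and prove non-mixed-unitarity survives the lift. That extra step is sound: the identity $\sum_k\lambda_k(XU_k-U_kX)^*(XU_k-U_kX)=\Phi^n(X^*X)-X^*X=0$ for a fixed point $X$ (trace preservation plus Kadison--Schwarz) is the standard proof of the fixed-point/commutant theorem that the paper itself cites as \cite[Theorem 2.1]{Kri03}, so you could simply quote that result instead of reproving it; the commutant of $0\oplus\M{d-3}$ then correctly forces $U_k=P_k\oplus c_kI_{d-3}$, and restricting to the $\M{3}$ corner exhibits $\Phi^{(3)}_{t_1}$ as mixed unitary, the desired contradiction. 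What your route buys is economy of input -- only the three-dimensional counterexample is needed, and your lifting observation (a channel acting as $\Psi$ on one diagonal block, as the identity on the other, and annihilating the off-diagonal corners is mixed unitary only if $\Psi$ is) has independent interest; what it costs is length, since the paper's general-$d$ example of K\"ummerer--Maassen \cite{KuMa87} makes the lifting unnecessary.
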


\begin{proof}
    Suppose that there exists $n\geq 2$ such that $\Phi^n$ is mixed unitary for all unital quantum channels $\Phi$ on $\M{d}$. Let  $\mcl{L}$ be the generator of a quantum dynamical semigroup of unital channels, which is not mixed unitary for some $t_0>0$. (Note that such $\mcl{L}$ exists. For example, take $\mcl{L}$ as in  Lemma \ref{eg-semigrp-not-mixed unitary}.) For $n\geq 1$, take $\Phi _n:=e^{\frac{t_0}{n}\mcl{L}}$. Then $\Phi _n$ is a unital quantum channel on $\M{d}$, but $(\Phi _n)^n=e^{t_0\mcl{L}}$ is not mixed unitary. 
\end{proof}

%%%%%%%%%%%%%%%%%%%%%%%%%%%%%%%%%%%%%%%%%%%%%%%%%%%%%%%%%%%%%%%%%%%%%%%%%%%%%%%%%%%%%%%%%%%%%%%%%%%%%%%%%%%

\section{Behavior of dynamical semigroup of quantum channels near origin}
Given any dynamical semigroup of unital quantum channels on $\M{d}$, we have shown that (See Theorem \ref{thm-semigp-eve-MU}) it is eventually mixed unitary. In this section, we examine the behavior of such semigroups near $t=0$.  We do this by first giving a new characterization of generators of semigroups having only mixed unitary channels.

\textbf{Notations:}
\begin{align}
    \msc{W}(d):&=\{\Phi\in\msc{H}(d):\Phi \text{ is unital and trace-preserving}\}.\\
   \msc{W}_1(d):&=\{\Phi\in \msc{W}(d):\norm{\Phi}=1\}.\\
   \mcl{C}(d):&=~\mbox{The cone generated by mixed unitary channels on $\M{d}$}.\\
    \msc{D}(d):&=\{\Gamma\in\msc{W}(d): \ip{\id,\Gamma}=0, \ip{\mathrm{Ad}_U,\Gamma}\geq 0,~~\forall U\in\mbb{U}(d)\}
\end{align}
In other words, $\msc{W}(d)$ is the set of unital,  trace-preserving Hermitian preserving maps on $\M{d}$ and $\msc{W}_1(d)$ is the collection of unit elements in it. Further,  $\msc{D}(d)$ is a convenient subset of the dual of the cone generated by mixed unitary channels. Note that the inner product here is defined using Choi matrices as in \eqref{inner product}. This will be used in the characterization below in Theorem \ref{new characterization}.

Consider the set $\mcl{C}(d)^\circ \cap\msc{W}_1(d)$. Then, it is a non-empty compact set. For, since the completely depolarizing channel $\delta_d\in\msc{W}_1(d)$ and $\ip{\Phi,\delta_d}=\frac{1}{d^2}\tr(\msc{J}(\Phi))\geq 0$, for all $\Phi\in\mcl{C}(d)$, we have $\delta_d\in\mcl{C}(d)^\circ \cap\msc{W}_1(d)$. It is compact because it is a closed subset of the compact set $\msc{W}_1(d)$.
\begin{lem}\label{lem-mixed unitary-semigroup}
Suppose $\mcl{L}\in\msc{H}(d)$ with $\ip{\Gamma, \mcl{L}}\geq 0$ for all $\Gamma\in\msc{D}(d)$. Then, for every $r>0$ there exists $\eta >0$ such that if  $\Gamma\in\mcl{C}(d)^{\circ}\cap\msc{W}_1(d)$ satisfies $\ip{\Gamma,\id}<\eta $ then $\ip{\Gamma,\mcl{L}}\geq -r$.
\end{lem}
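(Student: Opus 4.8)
The plan is to argue by contradiction, exploiting the compactness of $\mcl{C}(d)^\circ\cap\msc{W}_1(d)$ together with the fact that the hypothesis $\ip{\Gamma,\mcl{L}}\geq 0$ holds on the entire ``zero slice'' $\{\Gamma:\ip{\Gamma,\id}=0\}$. First I would record that on $\msc{H}(d)$ the pairing $\ip{\cdot,\cdot}$ is a real symmetric inner product: the Choi matrices of Hermitian-preserving maps are Hermitian, so $\tr\big(\msc{J}(\Gamma)^*\msc{J}(\Psi)\big)$ is real and symmetric in its arguments. Consequently the two functionals $\Gamma\mapsto\ip{\Gamma,\mcl{L}}$ and $\Gamma\mapsto\ip{\Gamma,\id}$ are continuous. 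I would then observe that $K:=\mcl{C}(d)^\circ\cap\msc{W}_1(d)$ is compact: $\msc{W}_1(d)$ is the intersection of the closed affine set $\msc{W}(d)$ with the unit sphere, hence closed and bounded, and $\mcl{C}(d)^\circ=\{\Gamma\in\msc{H}(d):\ip{\Gamma,\mathrm{Ad}_U}\geq 0~\forall U\in\mbb{U}(d)\}$ is closed, being an intersection of closed half-spaces.

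The key structural point is to identify the face of $K$ on which $\ip{\Gamma,\id}=0$ with the norm-one part of $\msc{D}(d)$. Since $\id=\mathrm{Ad}_I$ is itself a unitary channel, every $\Gamma\in K$ satisfies $\ip{\Gamma,\id}\geq 0$, and therefore $K\cap\{\ip{\Gamma,\id}=0\}$ is precisely the set of norm-one elements of $\msc{D}(d)$. On this set the standing hypothesis yields $\ip{\Gamma,\mcl{L}}\geq 0$.

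Now I would suppose the conclusion fails for some fixed $r>0$. Then for each $n$ one may choose $\Gamma_n\in K$ with $\ip{\Gamma_n,\id}<1/n$ but $\ip{\Gamma_n,\mcl{L}}<-r$. By compactness of $K$ I pass to a subsequence $\Gamma_{n_k}\to\Gamma_*\in K$. Continuity together with $0\leq\ip{\Gamma_{n_k},\id}<1/n_k$ forces $\ip{\Gamma_*,\id}=0$, so $\Gamma_*\in\msc{D}(d)$; on the other hand $\ip{\Gamma_*,\mcl{L}}=\lim_k\ip{\Gamma_{n_k},\mcl{L}}\leq -r<0$, which contradicts the hypothesis applied to $\Gamma_*$. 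This contradiction establishes the lemma.

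The argument is short once the framework is in place; the step requiring the most care — and the place where the normalization to $\msc{W}_1(d)$ is genuinely needed — is the compactness extraction. Without fixing $\norm{\Gamma}=1$, the dual object $\mcl{C}(d)^\circ\cap\msc{W}(d)$ is an unbounded cone and the limiting argument collapses (indeed the statement itself would be false, since one could rescale $\Gamma$). The other mild but essential point is the inequality $\ip{\Gamma,\id}\geq 0$ on $K$, which is exactly what guarantees that the limit point lands on the zero-face $\msc{D}(d)$ rather than merely in a neighbourhood of it.
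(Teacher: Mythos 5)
Your proof is correct and takes essentially the same route as the paper's: a contradiction argument that uses compactness of $\mcl{C}(d)^{\circ}\cap\msc{W}_1(d)$ to extract a limit point $\Gamma_*$, which is forced into $\msc{D}(d)$ and then violates the standing hypothesis $\ip{\Gamma_*,\mcl{L}}\geq 0$. If anything, your write-up is slightly more careful at two points the paper glosses over: you justify $\ip{\Gamma,\id}\geq 0$ on $\mcl{C}(d)^{\circ}\cap\msc{W}_1(d)$ (via $\id=\mathrm{Ad}_{I}$ being a unitary channel), which is exactly what pins the limit onto the zero slice, and you pass to the limit with the correct non-strict inequality where the paper's display formally asserts a strict one.
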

\begin{proof}
We will prove by contradiction. Suppose there exists a $r>0$  and a sequence  $\{\Gamma_n \}$ $\mcl{C}(d)^\circ \cap\msc{W}_1(d)$ with $\ip{\Gamma_n,\id}<\frac{1}{n}$ such that 
    \begin{align}\label{eq-conditionally}
        \ip{\Gamma_n,\mcl{L}}<-r,
    \end{align}
 for every $n.$   Since $\mcl{C}(d)^\circ \cap\msc{W}_1(d)$ is compact, there exist a $\Gamma\in\mcl{C}(d)^\circ \cap\msc{W}_1(d)$ and a sub sequence $\{\Gamma_{n_k}\}$ such that $\lim_{n_k}\Gamma_{n_k}=\Gamma$ and
    \begin{align*}
        0\leq \ip{\Gamma,\id}=\lim_{n_k}\ip{\Gamma_{n_k},\id}\leq\lim_{n_k}\frac{1}{n_k}= 0.
    \end{align*}
   This shows that $\Gamma\in\msc{D}(d)$. Now, from equation \eqref{eq-conditionally}, we have $\ip{\Gamma, \mcl{L}}\leq -r<0$. This contradicts the assumption on $\mcl{L}$. 
\end{proof}
Recall that K{\"u}mmerer et al.\ \cite{KuMa87} characterizes  (See Theorem \ref{thm-mu-semi-gene}) the dynamical semigroups that are mixed unitary channels for all $t\geq 0$.  Here, we provide another necessary and sufficient condition for a semigroup to be mixed unitary.

\begin{thm}\label{thm-schonberg-mixed unitary}\label{new characterization}
     Let $\mcl{L}\in\msc{H}(d)$ be such that $\mcl{L}(I_d)=0$ and $\tr(\mcl{L}(X))=0, \forall X\in\M{d}$. Then, the following are equivalent:
     \begin{enumerate}[label=(\roman*)]
         \item $e^{t\mcl{L}}$ is mixed unitary for all $t\geq 0$;
         \item The generator $\mcl{L}$ is in the closed cone generated by $\{\mathrm{Ad}_U-\id:U\in\mbb{U}(d)\}$; %\textcolor{red}{THIS IS NOT INCLUDED IN THE PROOF BELOW. PLEASE SUPPLY IT.}\todo{Added} 
         % \item For every unital, trace-preserving $\Gamma\in\msc{H}(d)$ with $\ip{\id,\Gamma}=0$ and $\ip{\mathrm{Ad}_U,\Gamma}\geq 0$ for every non-identity $U\in\mbb{U}(d)$, we have $\ip{\Gamma,\mcl{L}}\geq 0$.
         \item $\ip{\Gamma,\mcl{L}}\geq 0$ for all $\Gamma\in\msc{D}(d)$.
     \end{enumerate}
 \end{thm}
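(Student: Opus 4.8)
The plan is to prove $(i)\Leftrightarrow(ii)$ by quoting the Kümmerer--Maassen characterization and to establish the genuinely new equivalence $(ii)\Leftrightarrow(iii)$ by a cone-duality (bipolar) argument adapted to the hypotheses $\mcl{L}(I_d)=0$ and $\tr\circ\mcl{L}=0$. For $(i)\Leftrightarrow(ii)$: if $(i)$ holds then each $e^{t\mcl{L}}$ is a mixed unitary channel, hence completely positive, so $\{e^{t\mcl{L}}\}_{t\geq 0}$ is a quantum dynamical semigroup of unital channels and Theorem \ref{thm-mu-semi-gene} applies, yielding $(ii)$; conversely $(ii)$ is verbatim condition (iii) of Theorem \ref{thm-mu-semi-gene}, which returns $(i)$. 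The implication $(ii)\Rightarrow(iii)$ is then immediate: writing $S:=\{\mathrm{Ad}_U-\id:U\in\mbb{U}(d)\}$, any $\Gamma\in\msc{D}(d)$ satisfies $\ip{\Gamma,\mathrm{Ad}_U-\id}=\ip{\Gamma,\mathrm{Ad}_U}-\ip{\Gamma,\id}=\ip{\Gamma,\mathrm{Ad}_U}\geq 0$ because $\ip{\id,\Gamma}=0$; hence $\msc{D}(d)\subseteq S^{\circ}$, and $\mcl{L}\in\overline{\mathrm{cone}}(S)=S^{\circ\circ}$ forces $\ip{\Gamma,\mcl{L}}\geq 0$ for every $\Gamma\in\msc{D}(d)$.

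The core is $(iii)\Rightarrow(ii)$. By the bipolar theorem it suffices to show $\ip{\Gamma,\mcl{L}}\geq 0$ for all $\Gamma\in S^{\circ}$ and then conclude $\mcl{L}\in S^{\circ\circ}=\overline{\mathrm{cone}}(S)$. The two hypotheses on $\mcl{L}$ translate, through Theorem \ref{thm-C-J-map}(v)--(vi), into $(\tr\otimes\id)(\msc{J}(\mcl{L}))=0$ and $(\id\otimes\tr)(\msc{J}(\mcl{L}))=0$; consequently $\ip{\Gamma,\mcl{L}}$ is unchanged when $\msc{J}(\Gamma)$ is modified by any Hermitian term $A\otimes I_d+I_d\otimes B$. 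A short computation shows these modifications also leave $\ip{\Gamma,\mathrm{Ad}_U-\id}$ unchanged, since $\ip{\msc{J}^{-1}(A\otimes I_d),\mathrm{Ad}_U}=\tfrac{1}{d}\tr(A)$ is independent of $U$, and likewise for $I_d\otimes B$. As $\msc{J}(\delta_d)=\tfrac{1}{d^2}I_{d^2}$ has exactly this shape, these freedoms let me replace any $\Gamma\in S^{\circ}$ by a unital trace-preserving representative $\tilde\Gamma=\delta_d+\Gamma_0$, where $\Gamma_0$ lies in the subspace $V_0:=\{\mcl{M}\in\msc{H}(d):\mcl{M}(I_d)=0,\ \tr\circ\mcl{M}=0\}$, without altering $\ip{\tilde\Gamma,\mcl{L}}=\ip{\Gamma,\mcl{L}}=\ip{\Gamma_0,\mcl{L}}$ (using $\ip{\delta_d,\mcl{L}}=0$) or membership in $S^{\circ}$. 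Averaging $\ip{\Gamma_0,\mathrm{Ad}_U}\geq\ip{\Gamma_0,\id}$ over Haar measure and using $\int\mathrm{Ad}_U\,dU=\delta_d\perp V_0$ gives $\ip{\Gamma_0,\id}\leq 0$. When $\ip{\Gamma_0,\id}<0$ I rescale $\Gamma_0$ so that $\ip{\id,\delta_d+\Gamma_0}=0$; then $\delta_d+\Gamma_0\in\msc{D}(d)$, and $(iii)$ delivers $\ip{\Gamma_0,\mcl{L}}\geq 0$, as required.

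The remaining obstacle, which I expect to be the hardest step, is the boundary case $\ip{\Gamma_0,\id}=0$, where rescaling onto the slice $\msc{D}(d)$ is impossible. Here the Haar inequality becomes an equality, which forces the nonnegative continuous function $U\mapsto\ip{\Gamma_0,\mathrm{Ad}_U}$ to vanish identically; since the real span of $\{\mathrm{Ad}_U-\delta_d:U\in\mbb{U}(d)\}$ is all of $V_0$ (Burnside applied to the irreducible $(d^2-1)$-dimensional summand of $U\mapsto\mathrm{Ad}_U$), this already yields $\Gamma_0=0$, so $\ip{\Gamma_0,\mcl{L}}=0\geq 0$. To bypass this spanning fact one may instead argue analytically: approximate a boundary $\Gamma_0$ by interior elements with $\ip{\cdot,\id}<0$ (covered above) and pass to the limit, the uniform control needed near the boundary being precisely what Lemma \ref{lem-mixed unitary-semigroup} provides via the compactness of $\msc{C}(d)^{\circ}\cap\msc{W}_1(d)$. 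Either route closes $(iii)\Rightarrow(ii)$, and together with $(i)\Leftrightarrow(ii)$ and $(ii)\Rightarrow(iii)$ this completes the cycle of equivalences.
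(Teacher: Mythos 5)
Your proposal is essentially correct, but it takes a genuinely different route from the paper's in the hard direction. The paper never invokes K\"ummerer--Maassen in its proof: it establishes $(i)\Rightarrow(ii)\Rightarrow(iii)$ exactly as you do, and then closes the cycle with a direct proof of $(iii)\Rightarrow(i)$, reducing to showing $\id+\tfrac{1}{n}\mcl{L}\in\msc{MU}(d)$ for large $n$ (via $e^{\mcl{L}}=\lim_n(\id+\tfrac{1}{n}\mcl{L})^n$ and closure of $\msc{MU}(d)$ under composition and limits), perturbing to $\mcl{L}_\epsilon=\mcl{L}+\epsilon\delta_d$, splitting into the cases $\ip{\Gamma,\id}<\eta$ and $\ip{\Gamma,\id}\geq\eta$ with the compactness Lemma \ref{lem-mixed unitary-semigroup} controlling the first case, and finishing with Lemma \ref{lem-interior-dual}(iii). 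You instead prove $(iii)\Rightarrow(ii)$ by bipolarity: you absorb the marginal freedom $\msc{J}^{-1}(A\otimes I_d+I_d\otimes B)$ (invisible to $\mcl{L}$ by the two hypotheses, and invisible to every $\mathrm{Ad}_U-\id$), normalize a dual-cone element to $\delta_d+\Gamma_0$, rescale into the slice $\msc{D}(d)$, and dispose of the degenerate case $\ip{\Gamma_0,\id}=0$ by Haar averaging plus a spanning argument. I checked the computations behind these reductions (solvability of the marginal-adjustment system for Hermitian $A,B$; $\ip{\delta_d,\mathrm{Ad}_U-\id}=0$; the rescaled map landing in $\msc{D}(d)$): they all hold. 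Your route makes transparent exactly why the normalized slice $\msc{D}(d)$ detects the full dual cone; the paper's route has the advantage that $(iii)\Rightarrow(i)$ is re-derived from scratch, so the theorem does not lean on the quoted classification.

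Two steps need repair, though neither is fatal. First, $(ii)\Rightarrow(i)$: Theorem \ref{thm-mu-semi-gene} presupposes that $\{e^{t\mcl{L}}\}_{t\geq 0}$ is a quantum dynamical semigroup of unital channels, and under $(ii)$ alone complete positivity of $e^{t\mcl{L}}$ is not yet known, so the theorem cannot be applied "verbatim". You must first argue it; for instance, for a finite conic combination $\mcl{L}_k=\sum_j\mu_j(\mathrm{Ad}_{U_j}-\id)$ one has $e^{t\mcl{L}_k}=e^{-t\sum_j\mu_j}\sum_{m\geq 0}\tfrac{t^m}{m!}\bigl(\sum_j\mu_j\mathrm{Ad}_{U_j}\bigr)^m$, a convex combination (in the limit) of unitary channels, hence in $\msc{MU}(d)$; compactness of $\msc{MU}(d)$ and continuity of $\mcl{L}\mapsto e^{t\mcl{L}}$ then pass this to $\mcl{L}=\lim_k\mcl{L}_k$. (This in fact proves $(ii)\Rightarrow(i)$ outright, making the appeal to K\"ummerer--Maassen unnecessary.) Second, the boundary case: Burnside gives that the \emph{complex} span of the image of the adjoint summand is everything, whereas your claim concerns the \emph{real} span of $\{\mathrm{Ad}_U-\delta_d\}$; you need one more line, namely that this real span $W$ satisfies $W+iW=$ all linear maps on traceless matrices while $W$ sits inside the Hermitian-preserving ones, which have exactly half the real dimension, forcing $W=V_0$. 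Alternatively, your analytic fallback can be made precise without Lemma \ref{lem-mixed unitary-semigroup}: perturb to $\Gamma_0-\epsilon(\id-\delta_d)$, which stays in the dual cone because $\ip{\id-\delta_d,\mathrm{Ad}_U-\id}=\tfrac{\abs{\tr(U)}^2}{d^2}-1\leq 0$ and has $\ip{\cdot\,,\id}<0$, apply your rescaling step, and let $\epsilon\to 0$ using only continuity of $\ip{\cdot\,,\mcl{L}}$.
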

 %\textcolor{red}{Any comparison with the result of Kummerer?}\textcolor{blue}{Our theorem gives an alternative (rather easier) proof of the implication $(ii)\Rightarrow (i)$ in Kummerer's paper \cite{KuMa87} }.
 \begin{proof}
$(i)\Rightarrow (ii)$ 
Assume that $e^{t\mcl{L}}$ is mixed unitary for all $t$. That is, $e^{t\mcl{L}}=\sum_{j=1}^{n(t)}\lambda_j(t)\mathrm{Ad}_{U_j(t)}$,
for some unitaries $U_j(t)$ and scalars $\lambda_j(t)\in[0,1]$ with $\sum_{j=1}^{n(t)}\lambda_j(t)=1$.  Therefore,
\begin{align*}
    \frac{e^{t\mcl{L}}-\id}{t}=\sum_{j=1}^{n(t)}\lambda_j(t)(\mathrm{Ad}_{U_j(t)}-\id)\in \text{cone}\{\mathrm{Ad}_U-\id:U\in\mbb{U}(d)\},
\end{align*} 
Consequently, $\mcl{L}=\lim_{t}\frac{e^{t\mcl{L}}-\id}{t}$ is in the closed cone generated by $\{\mathrm{Ad}_U-\id:U\in\mbb{U}(d)\}$.\\
$(ii)\Rightarrow (iii)$ Assume that $\mcl{L}$ is in the closed cone generated by  $\{\mathrm{Ad}_U-\id:U\in\mbb{U}(d)\}$. Let $\Gamma\in\msc{D}(d)$ be arbitrary. Then, for any unitary $U\in\mbb{U}(d)$,
\begin{align*}
   0\leq \ip{\Gamma,\mathrm{Ad}_U}=\ip{\Gamma,\mathrm{Ad}_U}-\ip{\Gamma,\id}=\ip{\Gamma,\mathrm{Ad}_U-\id}.
\end{align*}
Therefore, for all $\Phi\in\text{cone}\{\mathrm{Ad}_U-\id:U\in\mbb{U}(d)\}$ we have $\ip{\Gamma,\Phi}\geq 0$. Consequently, we also get $\ip{\Gamma,\Phi}\geq 0$, for all $\Phi$ is in the closed cone generated by $\{\mathrm{Ad}_U-\id:U\in\mbb{U}(d)\}$. In particular, we get $\ip{\Gamma,\mcl{L}}\geq 0$.\\
% Let $\Gamma\in\msc{D}(d)$ be arbitrary. Then,
%      \begin{align*}
%          \ip{\Gamma,\mcl{L}}&=\ip{\Gamma,\lim_{t\to 0^+}\frac{e^{t\mcl{L}}-\id}{t}}
%                       =\lim_{t\to 0^+}\frac{\ip{\Gamma,e^{t\mcl{L}}}-\ip{\Gamma,\id}}{t}
%                       =\lim_{t\to 0^+}\frac{\ip{\Gamma,e^{t\mcl{L}}}}{t}
%                       \geq 0, 
%      \end{align*}
%      where the last inequality follows from the Lemma \ref{lem-interior-dual} (iii).\\
 $(iii)\Rightarrow (i)$  To prove $e^{t\mcl{L}}\in\msc{MU}(d)$ for all $t\geq 0$, it is enough to show that $e^{\mcl{L}}\in\msc{MU}(d)$. 
     This can be further simplified by showing that  $(\id+\frac{\mcl{L}}{n})\in\msc{MU}(d), ~~ \forall n\geq n_0$, for some fixed $n_0$, because $\msc{MU}(d)$ is closed under compositions and $e^{\mcl{L}}=\lim_{n\to\infty}(\id+\frac{\mcl{L}}{n})^n$.
    
    Given $\epsilon >0$, let $\mcl{L}_{\epsilon}:=\mcl{L}+\epsilon \delta_d$, where $\delta_d$ is the completely depolarizing channel. Set $r=\frac{\epsilon}{d^2}$. As $\ip{\Gamma, \mcl{L}}\geq 0$ for all $\Gamma\in\msc{D}(d)$. From Lemma \ref{lem-mixed unitary-semigroup}, there exists a $\eta >0$ such that  $\ip{\Gamma,\mcl{L}}\geq -r$ for all $\Gamma\in\mcl{C}^{\circ}\cap\msc{W}_1(d)$ with $\ip{\Gamma,\id}<\eta $, where $\mcl{C}$ is the closed cone generated by $\{\mathrm{Ad}_U:U\in\mbb{U}(d)\}$. 
    Let $\Gamma\in\mcl{C}^\circ\cap\msc{W}_1(d)$ be arbitrary,\\
    \ul{Case (i):} If $0\leq \ip{\Gamma,\id}<\eta $, then
    \begin{align*}
       \ip{\Gamma,\id+\frac{\mcl{L}_\epsilon}{n}}&=\ip{\Gamma,\id}+\frac{1}{n}\big(\ip{\Gamma,\mcl{L}}+\epsilon\ip{\Gamma,\delta_d}\big)
                                                      \geq\frac{1}{n}\big(\ip{\Gamma,\mcl{L}}+\epsilon\ip{\Gamma,\delta_d}\big)
                                                      \geq\frac{1}{n}(-\frac{\epsilon}{d^2}+\epsilon\frac{1}{d^2})
                                                      =0.
    \end{align*}\\
    \ul{Case (ii):} If $\ip{\Gamma,\id}\geq\eta $, then
    \begin{align*}
        \ip{\Gamma,\id+\frac{\mcl{L}_\epsilon}{n}}&=\ip{\Gamma,\id}+\frac{1}{n}\big(\ip{\Gamma,\mcl{L}}+\epsilon\ip{\Gamma,\delta_d}\big)\\
                                                        &\geq \eta+\frac{1}{n}\big(\ip{\Gamma,\mcl{L}}+\epsilon\ip{\Gamma,\delta_d}\big)\\
                                                        &\geq \eta+\frac{1}{n}\ip{\Gamma,\mcl{L}}\\
                                                        &\geq \eta-\frac{1}{n}\norm{\mcl{L}}_2.
    \end{align*}
     Here $\norm{\mcl{L}}_2^2=\ip{\mcl{L},\mcl{L}}$. Take $n_0\geq 1$ such that $n_0\eta\geq\norm{\mcl{L}}_2$, then
     \begin{align}
        \ip{\Gamma,\id+\frac{\mcl{L}_\epsilon}{n}}=\eta-\frac{1}{n}\norm{\mcl{L}}_2\geq \eta-\frac{1}{n_0}\norm{\mcl{L}}_2\geq 0, ~~~\forall n\geq n_0.
     \end{align}
     Therefore, for every $n\geq n_0$ and $ \Gamma\in\mcl{C}^\circ\cap\msc{W}_1(d)$,
     \begin{align*}
         \ip{\Gamma,\id+\frac{\mcl{L}}{n}}=\lim_{\epsilon\to 0}\ip{\Gamma,\id+\frac{\mcl{L}_\epsilon}{n}}\geq 0.
     \end{align*}
     In particular, $\ip{\Gamma,\id+\frac{\mcl{L}}{n}}\geq 0$ for all $\Gamma\in\msc{MU}(d)^\circ\cap\msc{W}(d)$ and $n\geq n_0$. Now, from Lemma \ref{lem-interior-dual} (iii), it follows that $\id+\frac{\mcl{L}}{n}$ is mixed unitary for all $n\geq n_0$. This completes the proof.\end{proof}
\begin{rmk}
  From the above Theorem \ref{thm-schonberg-mixed unitary}, it follows that if the infinitesimal generator  $\mcl{L}$ has the form $\Psi-\id$, where $\Psi$ is a mixed unitary channel, then the semigroup $e^{t\mcl{L}}$ is mixed unitary for all $t\geq 0$. Interestingly, there  exist semigroups of mixed unitary channels whose infinitesimal generator is not of this form.
  %can be written as $\Psi-\id$, where $\Psi$ is not mixed unitary.
  As an example, consider the  linear map $\mcl{L}:\M{3}\to\M{3}$ given by
    \begin{align*}
        \mcl{L}(X)=\frac{1}{2}(\tr(X)I_3-X^{\T})-X, ~~~~\forall X\in\M{3}.
    \end{align*}
    Since the adjiont of the map $X\mapsto \frac{1}{2}(\tr(X)I_3-X^{\T})$ is itself, there exists Hermitian matrices $A_j\in\M{3}$ such that $\sum_{j}A_j^2=I_3$ and 
\begin{align*}
    \mcl{L}(X)=\sum_{j=1}^\ell A_jXA_j-X=\sum_{j=1}^\ell \big(A_jXA_j-\frac{1}{2}(A_j^2X+XA_j^2)\big), ~~\forall X\in\M{3}
\end{align*}
 By Theorem \ref{thm-mu-semi-gene}, the semigroup $e^{t\mcl{L}}$ is mixed unitary for all $t\geq 0$. 
\end{rmk}

\begin{thm}\label{thm-semigroup-not mu-intially}
    Let $\{\Phi _t\}_{t\geq 0}$ be a quantum dynamical semigroup of unital channels. Then the set $\{t\in [0, \infty ): 
    \Phi _t \text{ is not mixed unitary}\}$ is open in $\mathbb{R}$. Moreover one of the following happens:
    \begin{enumerate}[label=(\roman*)]
        \item $\Phi _t$ is mixed unitary for all $t\geq 0$;
        \item There exist $0<t_0\leq t_1<\infty $ such that
        $\Phi _t$ is not mixed unitary for $t\in (0, t_0)$ and is mixed unitary for $t=t_0$ and for $t\in [t_1, \infty )$.
       % \item There exists a $t_0$ such that  $e^{t\mcl{L}}$ is not mixed unitary for all $t\in(0,t_0)$ and mixed unitary for all $t\geq t_0.$
       % \item There exists a $t_0< t_1$  such that  $e^{t\mcl{L}}$ is not mixed unitary for all $t\in(0,t_0)$ and mixed unitary for all $t\geq t_1$. Furthermore, the set $\{t\in [t_0, t_1]: e^{t\mcl{L}} \text{ is not mixed unitary}\}$ is open.
    \end{enumerate}
\end{thm}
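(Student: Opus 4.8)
The plan is to treat the two assertions separately, handling the topological statement first and then the dichotomy. For openness, I would invoke that $\msc{MU}(d)$ is compact, hence closed, by Lemma \ref{lem-interior-dual}(i), together with the continuity of $t\mapsto\Phi_t$ built into the definition of a one-parameter semigroup. Then $\{t\in[0,\infty):\Phi_t\in\msc{MU}(d)\}$ is the preimage of a closed set under a continuous map, so it is closed in $[0,\infty)$ and its complement $S:=\{t:\Phi_t\text{ is not mixed unitary}\}$ is open in $[0,\infty)$. Since $\Phi_0=\id=\mathrm{Ad}_{I_d}$ is mixed unitary, $0\notin S$, so $S\subseteq(0,\infty)$; an open subset of $[0,\infty)$ that avoids $0$ is automatically open in $\mathbb{R}$, which gives the first claim.

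For the dichotomy, Theorem \ref{thm-semigp-eve-MU} immediately supplies a $t_1<\infty$ with $\Phi_t$ mixed unitary for all $t\geq t_1$. If $S=\emptyset$ we are in case (i), so I would assume $S\neq\emptyset$ and aim to produce $t_0>0$ with $(0,t_0)\subseteq S$. I would argue by contradiction: if no such $t_0$ exists, then every interval $(0,\epsilon)$ meets the mixed-unitary set, so there is a sequence $t_n\downarrow 0$ with each $\Phi_{t_n}$ mixed unitary.

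The crux is converting this sequence into information about the generator $\mcl{L}$. Writing $\Phi_{t_n}=\sum_j\lambda_j^{(n)}\mathrm{Ad}_{U_j^{(n)}}$ with $\lambda_j^{(n)}\geq 0$ summing to $1$, I observe that
\[
\frac{\Phi_{t_n}-\id}{t_n}=\frac{1}{t_n}\sum_j\lambda_j^{(n)}\big(\mathrm{Ad}_{U_j^{(n)}}-\id\big)
\]
lies in the cone generated by $\{\mathrm{Ad}_U-\id:U\in\mbb{U}(d)\}$, since the coefficients $\lambda_j^{(n)}/t_n$ are nonnegative. As $t_n\to 0$ the left-hand side converges to $\mcl{L}$, so $\mcl{L}$ lies in the \emph{closed} cone generated by $\{\mathrm{Ad}_U-\id\}$. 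Because $\mcl{L}$ is Hermitian-preserving with $\mcl{L}(I_d)=0$ and $\tr(\mcl{L}(X))=0$ (Theorem \ref{thm-cp-semigroup}), the implication (ii)$\Rightarrow$(i) of Theorem \ref{new characterization} forces $\Phi_t=e^{t\mcl{L}}$ to be mixed unitary for all $t\geq 0$, contradicting $S\neq\emptyset$. This yields the desired $t_0>0$ with $(0,t_0)\subseteq S$.

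Finally I would reconcile $t_0$ and $t_1$: every point of $(0,t_0)$ is non-mixed-unitary while every point of $[t_1,\infty)$ is mixed unitary, so these sets are disjoint, forcing $t_0\leq t_1$ and completing case (ii). The main obstacle is precisely this near-origin analysis: the non-obvious point is that mixed unitarity along a single sequence $t_n\to 0^+$ already pins the generator inside the closed cone and hence, via Theorem \ref{new characterization}, forces mixed unitarity at \emph{all} times; the openness claim and the supply of $t_1$ are then soft consequences of compactness, continuity, and the eventual-mixed-unitarity theorem.
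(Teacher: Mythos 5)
Your proof is correct, but it reaches the crucial near-origin conclusion by a genuinely different route from the paper. The paper argues directly via the dual leg (i)$\Leftrightarrow$(iii) of Theorem \ref{thm-schonberg-mixed unitary}: if some $\Phi_s$ is not mixed unitary, there is a witness $\Gamma\in\msc{D}(d)$ with $\ip{\Gamma,\mcl{L}}<0$, and the real function $g(t)=\ip{\Gamma,e^{t\mcl{L}}}$ satisfies $g(0)=0$ and $g'(0)=\ip{\Gamma,\mcl{L}}<0$, hence is negative on some $(0,s_0)$; Lemma \ref{lem-interior-dual} then converts this sign condition into non-mixed-unitarity of $\Phi_t$ on that interval. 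You instead run a contradiction through the cone leg (i)$\Leftrightarrow$(ii): mixed unitary times $t_n\downarrow 0$ place the difference quotients $(\Phi_{t_n}-\id)/t_n$ in the cone generated by $\{\mathrm{Ad}_U-\id : U\in\mbb{U}(d)\}$, hence $\mcl{L}$ lies in the closed cone and the whole semigroup is mixed unitary. Both proofs lean on the same characterization theorem and the same soft ingredients (closedness of $\msc{MU}(d)$ for openness, Theorem \ref{thm-semigp-eve-MU} for $t_1$, disjointness of the two intervals for $t_0\leq t_1$), but they use different legs of it. What the paper's route buys is constructiveness: the witness $\Gamma$ yields a sign condition one can actually evaluate, and this is precisely the technique the paper reuses to compute an explicit $t_0\approx 1.4034$ in its final example of Section 5. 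What your route buys is economy and a sharper structural statement: mixed unitarity along any single sequence of times accumulating at $0^+$ already forces global mixed unitarity, with no derivative computation or separating functional needed; the cost is that, being a contradiction argument, it gives no quantitative handle on $t_0$. A minor point in your favor: you justify openness in $\mathbb{R}$ (not merely in $[0,\infty)$) by noting that $0\notin S$ since $\Phi_0=\id$ is mixed unitary, a detail the paper dismisses as obvious.
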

%\textcolor{red}{Do we have any example where (3) happens. If not we should mention it.}
\begin{proof} Take $N=\{t\in [0, \infty ): 
    \Phi _t \text{ is not mixed unitary}\}$.  Since the set of mixed unitary channels is closed it is obvious that $N$ is open.
  Let $\mcl{L}$ be the the generator of 
$\{\Phi _t\}$ so that $\Phi _t=e^{t\mcl{L}}.$
    Suppose that $\Phi _s$ is not mixed unitary for some $s>0$,  from Theorem \ref{thm-schonberg-mixed unitary}, there exists a $\Gamma\in\msc{D}(d)$ such that $\ip{\Gamma,\mcl{L}}<0$.  Note that if two matrices are Hermitian, then the trace of their product  is a real number. This allows us to  define a real valued function $g$ on $[0,\infty)$ by
%\begin{align*}
 $  g(t):=\ip{\Gamma,e^{t\mcl{L}}},~~t\in [0, \infty).$
%\end{align*}
Then $g$ is  a differentiable function with $g(0)=0$. Furthermore, 
    \begin{align*}
        g'(0) =\lim_{t\to 0}\frac{\ip{\Gamma,e^{t\mcl{L}}}-\ip{\Gamma,\id}}{t}
             %&=\lim_{t\to 0}\ip{\Gamma,\frac{e^{t\mcl{L}}-\id}{t}}\\
             %&=\ip{\Gamma,\lim_{t\to 0}\frac{e^{t\mcl{L}}-\id}{t}}
             =\ip{\Gamma, \mcl{L}}<0.
    \end{align*}
    Therefore, there exists a $s_0>0$ such that $g(t)<0$ for all $t\in(0,s_0)$. i.e., $\ip{\Gamma, \Phi _t}<0$ for all $0<t<s_0$. Now, by Lemma \ref{lem-interior-dual} we get that $\Phi _t$ is not mixed unitary for all $t\in (0,s_0)$.
 
  Let $t_0:=\max\{t: \Phi _s \mbox{ is not mixed unitary }, \forall s\in (0,t)\}$.  We claim that $\Phi _{t_0}$ is a mixed unitary. In deed this clear from openness of $N$, as otherwise $t_0\in N$ and this would mean $[t_0, t_0+\epsilon)\subseteq N$ for some $\epsilon >0.$ Now, define $$t_1:=\min\{t: \Phi _s \mbox{ is mixed unitary }, \forall s\in [t,\infty)\}.$$ By Theorem \ref{thm-semigp-eve-MU}, $t_1$ is finite.
Clearly $t_0\leq t_1$. 
\end{proof}

We wish to remark that in the previous theorem, we do not know whether one can take $t_1=t_0$ or not. Considering the general features of numerous  situations of eventual positivity (See for example \cite{Ja17})  we would guess that there could be examples for which $t_0<t_1$ and  the set of points $t$ in $[t_0, t_1]$ for which $\Phi _t$ is a mixed unitary channel is a discrete set. We are not able to confirm this in any of the following examples.

We now give an example of a dynamical semigroup of unital quantum channels, which is not mixed unitary for some  $t>0$. For any $B\in\M{d}$, let $\mu(B)$ denote the set of all singular values of $B$.
 
\begin{lem}\cite[Corollory 15]{MeWolf09}\label{lem-min-attaining}
    Given non-negative numbers $\mu_1\geq \mu_2\geq \cdots \geq \mu_d$ we have
    \begin{align}
        \min_{A}\{\tr(A\ol{A}): \mu(A)=\{\mu_j\}_{j=1}^d\}&= \begin{cases*}
      -2\Big(\sum_{j=1}^{\frac{d}{2}}\mu_{2j}\mu_{2j-1}\Big) & if $d$ is even \\
     -2\Big(\sum_{j=1}^{\frac{d-1}{2}}\mu_{2j}\mu_{2j-1}\Big)+\mu_d^2 & if $d$ is odd
    \end{cases*}
    \end{align}
\end{lem}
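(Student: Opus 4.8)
The plan is to eliminate the singular-value constraint through an algebraic identity and then reduce the problem to a transparent extremal question about the antisymmetric part of $A$. First I would record the identity
\[
\tr(A\ol{A})=\norm{A_s}_{HS}^2-\norm{A_a}_{HS}^2,
\]
where $A_s=\tfrac12(A+A^{\T})$ and $A_a=\tfrac12(A-A^{\T})$. This follows by expanding $A\ol A=(A_s+A_a)(\ol{A_s}+\ol{A_a})$ inside the trace: the mixed terms $\tr(A_s\ol{A_a})$ and $\tr(A_a\ol{A_s})$ vanish because the trace of a symmetric matrix times an antisymmetric one is zero, while $\tr(A_s\ol{A_s})=\norm{A_s}_{HS}^2$ and $\tr(A_a\ol{A_a})=-\norm{A_a}_{HS}^2$ by the (anti)symmetry of the entries. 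Since $A_s\perp A_a$ in the Hilbert--Schmidt inner product, $\norm{A_s}_{HS}^2+\norm{A_a}_{HS}^2=\norm{A}_{HS}^2=\sum_{j}\mu_j^2$ is fixed by the singular values. Hence $\tr(A\ol A)=\sum_j\mu_j^2-2\norm{A_a}_{HS}^2$, and minimizing $\tr(A\ol A)$ over all $A$ with $\mu(A)=\{\mu_j\}$ is exactly the same as maximizing $\norm{A_a}_{HS}^2$ over that set.

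The heart of the argument is then an upper bound for $\norm{A_a}_{HS}^2$ driven by two structural facts. First, the singular values of a complex antisymmetric matrix occur in equal pairs (its canonical/Youla form consists of $2\times2$ blocks $\left(\begin{smallmatrix}0&s\\-s&0\end{smallmatrix}\right)$, with a forced zero when $d$ is odd); write them as $s_{2k-1}(A_a)=s_{2k}(A_a)=\tau_k$ with $\tau_1\ge\tau_2\ge\cdots$. Second, writing $A_a=\tfrac12A+\tfrac12(-A^{\T})$ and using subadditivity of the Ky Fan $k$-norms together with $s(-A^{\T})=s(A)=(\mu_j)$ yields the weak majorization $\sum_{i\le m}s_i(A_a)\le\sum_{i\le m}\mu_i$ for every $m$. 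Evaluating this at $m=2k$ and using the pairing gives $\sum_{j\le k}\tau_j\le\tfrac12\sum_{i\le 2k}\mu_i=\sum_{j\le k}\tau_j^{*}$ with $\tau_k^{*}:=\tfrac12(\mu_{2k-1}+\mu_{2k})$; that is, $(\tau_k)\prec_w(\tau_k^{*})$ as decreasing sequences. Since $x\mapsto x^2$ is convex and increasing, weak majorization gives $\sum_k\tau_k^2\le\sum_k(\tau_k^{*})^2$, hence $\norm{A_a}_{HS}^2=2\sum_k\tau_k^2\le\tfrac12\sum_k(\mu_{2k-1}+\mu_{2k})^2$.

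Finally I would verify sharpness and assemble the formula. The block-diagonal matrix $A$ with $2\times2$ blocks $\left(\begin{smallmatrix}0&\mu_{2k-1}\\-\mu_{2k}&0\end{smallmatrix}\right)$ (together with a $1\times1$ block $[\mu_d]$ when $d$ is odd) has singular values exactly $\{\mu_j\}$, and each $2\times2$ block contributes $-2\mu_{2k-1}\mu_{2k}$ to $\tr(A\ol A)$, so this $A$ attains the bound above. Substituting the maximal value of $\norm{A_a}_{HS}^2$ into $\tr(A\ol A)=\sum_j\mu_j^2-2\norm{A_a}_{HS}^2$ and expanding $\sum_j\mu_j^2-\sum_k(\mu_{2k-1}+\mu_{2k})^2$ cancels the square terms and leaves $-2\sum_k\mu_{2k-1}\mu_{2k}$ when $d$ is even and $-2\sum_k\mu_{2k-1}\mu_{2k}+\mu_d^2$ when $d$ is odd, which is the claimed value. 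I expect the genuine obstacle to be the upper bound of the second paragraph: the naive estimate $\norm{A_a}_{HS}^2\le\norm{A}_{HS}^2$ is far too weak, and one must really combine the pairing of antisymmetric singular values with Ky Fan weak majorization (using only the even-indexed inequalities) to land precisely on the consecutive-pair products; the explicit block construction then certifies that nothing better is possible.
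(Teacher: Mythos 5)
Your proposal is correct, but note first that the paper itself offers no proof of this lemma: it is imported verbatim from Mendl and Wolf \cite[Corollary 15]{MeWo09}, so there is no internal argument to compare yours against, and your proof has to stand on its own --- which it does. Every step checks out. The identity $\tr(A\ol{A})=\norm{A_s}_{HS}^2-\norm{A_a}_{HS}^2$ holds exactly as you argue (the mixed symmetric--antisymmetric traces vanish, $\tr(A_s\ol{A_s})=\sum_{ij}\abs{(A_s)_{ij}}^2$ and $\tr(A_a\ol{A_a})=-\sum_{ij}\abs{(A_a)_{ij}}^2$), and Hilbert--Schmidt orthogonality of $A_s$ and $A_a$ correctly converts the minimization into maximizing $\norm{A_a}_{HS}^2$ at fixed $\sum_j\mu_j^2$. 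The two structural inputs are sound: the singular values of a complex antisymmetric matrix pair up (Hua/Youla canonical form under unitary congruence, which preserves singular values; in odd dimension $\det(A_a)=-\det(A_a)$ forces the extra zero), and Ky Fan subadditivity applied to $A_a=\tfrac12 A+\tfrac12(-A^{\T})$ at the even indices $m=2k$ gives $\sum_{j\le k}\tau_j\le\tfrac12\sum_{i\le 2k}\mu_i=\sum_{j\le k}\tau_j^{*}$, where $(\tau_k^{*})$ is indeed decreasing because the $\mu_j$ are, so the weak majorization is legitimate. Since $x\mapsto x^2$ is convex and increasing on $[0,\infty)$, weak majorization yields $\sum_k\tau_k^2\le\sum_k(\tau_k^{*})^2$, hence $\norm{A_a}_{HS}^2\le\tfrac12\sum_k(\mu_{2k-1}+\mu_{2k})^2$, and expanding $\sum_j\mu_j^2-\sum_k(\mu_{2k-1}+\mu_{2k})^2$ gives exactly the stated right-hand side in both parities. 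Your block matrix with blocks $\bigl(\begin{smallmatrix}0&\mu_{2k-1}\\-\mu_{2k}&0\end{smallmatrix}\bigr)$, plus $[\mu_d]$ when $d$ is odd, is real, has precisely the prescribed singular values, and attains the bound, so the minimum equals the claimed value. Compared with merely citing \cite{MeWo09}, your route buys a self-contained proof resting only on textbook facts (canonical form of antisymmetric matrices, Ky Fan norms, preservation of weak majorization under increasing convex functions), and it isolates the actual mechanism: only the even-indexed Ky Fan inequalities are used, which is exactly what singles out the consecutive pairings $\mu_{2k-1}\mu_{2k}$ as optimal.
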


 \begin{lem}\label{lem-singular-unitary}
     Let $F=\sum_{i,j=1}^d E_{ij}\otimes E_{ji}\in\M{d}\otimes\M{d}$. For any $B\in\M{d}$, let $W=(I_d\otimes B)F(I_d\otimes B)^*$. Then 
     \begin{align}
              \min_{A}\{\tr(A\ol{A}): \mu(A)=\mu(B)\}&\leq\min\{\tr(W\ranko{(I_d\otimes U) e}{(I_d\otimes U)e}): U\in\mbb{U}(d)\},
     \end{align}
     where $e=\sum_{j=1}^d e_j\otimes e_j$.
 \end{lem}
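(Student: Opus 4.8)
The plan is to evaluate the quantity being minimized on the right-hand side explicitly, recognize it as $\tr(C\ol{C})$ for a matrix $C$ that carries exactly the singular values of $B$, and then obtain the inequality from a containment of feasible sets. The only real work is one computation involving the flip operator $F$ on the maximally entangled vector; after that the estimate is essentially automatic.

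First I would fix $U\in\mbb{U}(d)$ and write $\psi_U:=(I_d\otimes U)e$, so that $\tr\big(W\ranko{\psi_U}{\psi_U}\big)=\ip{\psi_U,W\psi_U}$. Substituting $W=(I_d\otimes B)F(I_d\otimes B)^*$ and moving $(I_d\otimes B)^*=I_d\otimes B^*$ onto each slot, I set $C:=B^*U$ to get $\ip{\psi_U,W\psi_U}=\ip{(I_d\otimes C)e,\,F(I_d\otimes C)e}$. The heart of the matter is how $F$ acts on $e$: since $F(x\otimes y)=y\otimes x$, one has $F(I_d\otimes C)e=\sum_j Ce_j\otimes e_j=(C\otimes I_d)e$. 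Expanding $(I_d\otimes C)e=\sum_j e_j\otimes Ce_j$ and $(C\otimes I_d)e=\sum_k Ce_k\otimes e_k$ in the standard basis and writing $c_{ab}=\ip{e_a,Ce_b}$, the inner product becomes
\[
\ip{(I_d\otimes C)e,\,(C\otimes I_d)e}=\sum_{j,k}c_{jk}\,\overline{c_{kj}}=\tr(C\ol{C}).
\]
Thus, for every $U$, $\tr\big(W\ranko{\psi_U}{\psi_U}\big)=\tr\big((B^*U)\ol{(B^*U)}\big)$.

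Next I would note that, because $U$ is unitary, $(B^*U)^*(B^*U)=U^*BB^*U$ is unitarily equivalent to $BB^*$, so $\mu(B^*U)=\mu(B)$ for every $U\in\mbb{U}(d)$. Hence, as $U$ ranges over $\mbb{U}(d)$, the matrix $C=B^*U$ ranges over a subset of $\{A\in\M{d}:\mu(A)=\mu(B)\}$. Since minimizing a function over a subset can only raise its infimum,
\[
\min_{A}\{\tr(A\ol{A}):\mu(A)=\mu(B)\}\;\le\;\min_{U\in\mbb{U}(d)}\tr\big((B^*U)\ol{(B^*U)}\big)\;=\;\min\{\tr(W\ranko{\psi_U}{\psi_U}):U\in\mbb{U}(d)\},
\]
which is precisely the asserted inequality.

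I do not expect a genuine obstacle beyond careful bookkeeping. The two places demanding attention are the flip identity $F(I_d\otimes C)e=(C\otimes I_d)e$ and the inner-product conventions of the paper (linearity in the second slot, and $\ol{C}$ denoting entrywise conjugation rather than the adjoint), which must be tracked consistently through the index computation yielding $\tr(C\ol C)$. It is worth emphasizing that the family $\{B^*U:U\in\mbb{U}(d)\}$ need not exhaust all matrices with singular values $\mu(B)$; only the one-sided inclusion is used, and that is all the inequality requires.
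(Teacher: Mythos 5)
Your proposal is correct and follows essentially the same route as the paper's proof: both evaluate $\tr\big(W\ranko{(I_d\otimes U)e}{(I_d\otimes U)e}\big)$ by moving $(I_d\otimes B)^*$ onto the vectors, applying the flip action of $F$, and identifying the result as $\tr\big((B^*U)\ol{(B^*U)}\big)$, then conclude from the inclusion $\{B^*U : U\in\mbb{U}(d)\}\subseteq\{A:\mu(A)=\mu(B)\}$. The only difference is notational (naming $C=B^*U$ and isolating the identity $F(I_d\otimes C)e=(C\otimes I_d)e$ before the index computation), which if anything makes the bookkeeping cleaner.
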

 \begin{proof}
     For every unitary $U\in\mbb{U}(d)$ we have
     \begin{align*}
         \tr(W\ranko{(I_d\otimes U) e}{(I_d\otimes U)e})&=\ip{(I_d\otimes U)e, W((I_d\otimes U)e)}
                                                         =\ip{\sum_je_j\otimes Ue_j,W(\sum_k e_k\otimes Ue_k)}\\
                                                         &=\sum_{j,k}\ip{e_j\otimes B^*Ue_j,F( e_k\otimes B^*Ue_k)}
                                                         =\sum_{j,k}\ip{e_j\otimes B^*Ue_j, B^*Ue_k\otimes e_k}\\
                                                         &=\sum_{j,k}\ip{e_j,B^*Ue_k}\ip{B^*Ue_j,e_k}
                                                         =\tr(B^*U\ol{B^*U}).
                                                         % &=\tr(B^*UB^{\T} \ol{U})\\
                                                         % &=\tr(UB^{\T}\ol{UB^{\T}}).
     \end{align*}
Note that if $A=B^*U$, then $A$ has the same singular values as $B$. Therefore, we have $ \min_{A}\{\tr(A\ol{A}): \mu(A)=\mu(B)\}\leq \tr(W\ranko{(I_d\otimes U)e}{(I_d\otimes U)e})$ for all $U\in\mbb{U}(d)$. Consequently,
$$ \min_{A}\{\tr(A\ol{A}): \mu(A)=\mu(B)\}\leq\min\{\tr(W\ranko{(I_d\otimes U) e}{(I_d\otimes U)e}): U\in\mbb{U}(d)\}.$$
 \end{proof}

\begin{lem}\label{lem-example-not mixed unitary semigroup}
    % Let $\eta:\M{3}\to\M{3}$ be a unital quantum channel (Holevo-Werner channel) given by 
    % \begin{align*}
    %     \eta(X):
    %     % =\frac{1}{2}\Matrix{x_{22}+x_{33} & -x_{21} & -x_{31}\\ -x_{12} & x_{11}+x_{33} & -x_{32}\\ -x_{13} & -x_{23} & x_{11}+x_{22}}
    %     =\frac{1}{2}(\tr(X)I_3-X^{\T}), ~~~~~~\forall X\in\M{3}.
    % \end{align*}
 Let $B\in\mbb{U}(3)$ be any unitary matrix such that $\tr(B^*B^{\T})=-1$, define the linear maps  $\mcl{L},\Gamma$ on $\M{3}$ by %\textcolor{red}{Different notations for transpose?}\todo{Corrected}
\begin{align*}
    \mcl{L}(X):&=\frac{1}{2}\big(\tr(X)I_3-BX^{\T}B^*-2X\big),\\
    \Gamma(X):&=\frac{1}{2}\Big(BX^{\T}B^*+\frac{1}{3}\tr(X)I_3\Big),~~~~\forall X\in\M{3}.  
\end{align*}
% where $B=\frac{1}{2}\Matrix{0 & 1-i & -1-i\\ -1+i & -i & 1\\ 1+i & 1 & i}\in\mbb{U}(3)$. 
Then the following holds:
\begin{enumerate}[label=(\roman*)]
    \item $\mcl{L}$ is Hermitian preserving map with $\mcl{L}(I_3)=0$ and $\tr(\mcl{L}(X))=0$.
    \item $\Gamma\in\msc{D}(3)$.
    % \item $\Gamma$ is unital trace-preserving map with $\ip{\mathrm{Ad}_{I_3},\Gamma}=0$ and $\ip{\mathrm{Ad}_U,\Gamma}\geq 0$ for every non-identity $U\in\mbb{U}(3)$.
    \item $\ip{\Gamma, \mcl{L}}=-\frac{1}{9}$.
\end{enumerate}
\end{lem}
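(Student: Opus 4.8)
The plan is to handle the three assertions in turn, reducing each to elementary trace identities for $3\times3$ matrices, supplemented in the one essential place by the minimization Lemma \ref{lem-min-attaining}. It is convenient to write both maps through the completely depolarizing channel $\delta_3(X)=\frac13\tr(X)I_3$ and the Hermitian-preserving map $\Theta(X):=BX^{\T}B^*$, so that
\begin{align*}
\mcl{L}=\tfrac32\delta_3-\tfrac12\Theta-\id,\qquad \Gamma=\tfrac12\Theta+\tfrac12\delta_3.
\end{align*}
Throughout I would use $I_3^{\T}=I_3$, $BB^*=B^*B=I_3$, and $\tr(BX^{\T}B^*)=\tr(X)$.

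For (i): the transpose map and conjugation by the unitary $B$ are each Hermitian-preserving, hence so are $\Theta$ and $\mcl{L}$; then $\mcl{L}(I_3)=\tfrac12(3I_3-BB^*-2I_3)=0$ and $\tr(\mcl{L}(X))=\tfrac12(3\tr X-\tr X-2\tr X)=0$ are immediate. The same remarks give that $\Gamma$ is Hermitian-preserving, unital ($\Gamma(I_3)=\tfrac12(I_3+I_3)=I_3$) and trace-preserving, i.e. $\Gamma\in\msc{W}(3)$, which is the easy half of (ii).

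For the inner products in (ii) and (iii) I would first record the convenient identity $\ip{\Phi,\Psi}=\frac{1}{d^2}\sum_{i,j}\tr\!\big(\Phi(E_{ij})^*\Psi(E_{ij})\big)$, obtained by expanding $\msc{J}(\Phi)=\frac1d\sum_{i,j}E_{ij}\otimes\Phi(E_{ij})$ in the definition of the pairing and using $\tr(E_{ji}E_{kl})=\delta_{ik}\delta_{jl}$. Then the single matrix-unit identity $E_{ji}ME_{ji}=M_{ij}E_{ji}$ (whence $\tr(E_{ji}ME_{ji}M^*)=M_{ij}\ol{M_{ji}}$) reduces every pairing to a number: a computation gives $\ip{\Theta,\Theta}=1$ and $\ip{\Theta,\delta_3}=\ip{\delta_3,\delta_3}=\ip{\delta_3,\id}=\tfrac19$, while the only pairing sensitive to the hypothesis on $B$ is $\ip{\Theta,\id}=\tfrac19\sum_{i,j}B_{ji}\ol{B_{ij}}=\tfrac19\,\ol{\tr(B^*B^{\T})}=-\tfrac19$. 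Consequently $\ip{\id,\Gamma}=\tfrac12\ip{\id,\Theta}+\tfrac12\ip{\id,\delta_3}=\tfrac12(-\tfrac19+\tfrac19)=0$, supplying the remaining defining condition of $\msc{D}(3)$ (once positivity is known), and expanding $\ip{\Gamma,\mcl{L}}$ bilinearly (all coefficients are real and the pairing is real and symmetric on Hermitian-preserving maps) collapses to $-\tfrac19$, which is (iii).

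The genuine obstacle is the positivity $\ip{\mathrm{Ad}_U,\Gamma}\geq0$ for every $U\in\mbb{U}(3)$. Splitting $\Gamma=\tfrac12\Theta+\tfrac12\delta_3$, one finds $\ip{\mathrm{Ad}_U,\delta_3}=\tfrac19$, and the cyclic rearrangement $\tr(U^*E_{ji}UBE_{ji}B^*)=\tr(E_{ji}ME_{ji}M^*)$ with $M=UB$ together with the identity above gives $\ip{\mathrm{Ad}_U,\Theta}=\tfrac19\tr(M^*M^{\T})$, so that
\begin{align*}
\ip{\mathrm{Ad}_U,\Gamma}=\tfrac{1}{18}\big(\tr(M^*M^{\T})+1\big),\qquad M=UB\in\mbb{U}(3).
\end{align*}
Putting $A=M^*$ we have $\ol{A}=M^{\T}$ and $\tr(M^*M^{\T})=\tr(A\ol{A})$, where $A$ is unitary and so has all three singular values equal to $1$. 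Lemma \ref{lem-min-attaining} in the odd case $d=3$ with $\mu_1=\mu_2=\mu_3=1$ then yields $\min\{\tr(A\ol{A}):\mu(A)=\{1,1,1\}\}=-2+1=-1$, hence $\tr(M^*M^{\T})\geq-1$ and $\ip{\mathrm{Ad}_U,\Gamma}\geq0$ (equivalently, this lower bound can be routed through the flip operator via Lemma \ref{lem-singular-unitary}). Together with $\Gamma\in\msc{W}(3)$ and $\ip{\id,\Gamma}=0$ this gives $\Gamma\in\msc{D}(3)$, finishing (ii); everything apart from this appeal to Lemma \ref{lem-min-attaining} is bookkeeping with matrix units.
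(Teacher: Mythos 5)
Your proof is correct and follows essentially the same route as the paper's: the decomposition $\Gamma=\tfrac12\Theta+\tfrac12\delta_3$, the verification $\ip{\id,\Gamma}=0$ from $\tr(B^*B^{\T})=-1$, and, crucially, the reduction of the positivity $\ip{\mathrm{Ad}_U,\Gamma}\geq 0$ to Lemma \ref{lem-min-attaining} with singular values $(1,1,1)$ (giving the lower bound $-1$) are exactly the paper's steps. The only difference is presentational: you evaluate all pairings via matrix-unit expansions of the Hilbert--Schmidt inner product, whereas the paper works with Choi matrices and the flip operator $W=(I_3\otimes B)F(I_3\otimes B)^*$ (using $W^2=I_3\otimes I_3$ for part (iii), and Lemma \ref{lem-singular-unitary} --- whose content you rederive inline --- for part (ii)).
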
 

\begin{proof}
    $(i)$ Follows from the definition of $\mcl{L}$.\\
    $(ii)$ Clearly, $\Gamma$ is unital and trace-preserving map. Take $W=(I_d\otimes B)F(I_d\otimes B)^*$, where $F=\sum_{i,j=1}^3E_{ij}\otimes E_{ji}$. Then $\msc{J}(\Gamma)=\frac{1}{6}(W+\frac{I_3\otimes I_3}{3})$, and 
     \begin{align*}
       \ip{\mathrm{Ad}_{I_3},\Gamma}=\tr(\msc{J}(\Gamma)\ranko{\Omega_3}{\Omega_3})&=\frac{1}{6}\tr\big((W+\frac{I_3\otimes I_3}{3})\ranko{\Omega_3}{\Omega_3}\big)\\
                                    &=\frac{1}{6}\big(\tr(W\ranko{\Omega_3}{\Omega_3})+\frac{1}{3}\big)\\
                                    &=\frac{1}{6}\big(\frac{1}{3}\tr(B^*B^{\T})+\frac{1}{3}\big)=0. &(\because \tr(B^*B^{\T})=-1).
    \end{align*}
    Now, for any unitary $U\in\mbb{U}(3)$, by using the Lemma \ref{lem-singular-unitary}  and \ref{lem-min-attaining}, we have
    \begin{align*}
        \ip{\mathrm{Ad}_{U},\Gamma}=\tr(\msc{J}(\mathrm{Ad}_U)\msc{J}(\Gamma))
                                   &=\tr\Big(\msc{J}(\mathrm{Ad}_U)\frac{1}{6}\big(W+\frac{1}{3}I_3\otimes I_3\big)\Big)\\
                                   &=\frac{1}{6}\tr(\msc{J}(\mathrm{Ad}_U)W)+\frac{1}{18}\tr(\msc{J}(\mathrm{Ad}_U))\\
                                   &=\frac{1}{18}\tr(W\ranko{(I_3\otimes U^*) e}{(I_3\otimes U^*)e})+\frac{1}{18}\notag\\
                               &\geq \frac{1}{18} \min\{\tr(W\ranko{(I_3\otimes V) e}{(I_3\otimes V) e}): V\in\mbb{U}(3)\}+\frac{1}{18}\notag\\
                               &\geq\frac{1}{18}\min\{\tr(A\ol{A}): \mu(A)=\mu(B)\}+\frac{1}{18}  \notag\\
                               &=\frac{1}{18}\min\{\tr(A\ol{A}): \mu(A)=(1,1,1)\}+\frac{1}{18} ~~~~(\because \mu(B)=(1,1,1)) &\notag\\
                               &= \frac{1}{18} (-2+1)+\frac{1}{18}
                                   =0.
    \end{align*}
     This shows that $\Gamma\in\msc{D}(3)$.\\
    $(iii)$ Let $W=(I_d\otimes B)F(I_d\otimes B)^*$, where $F=\sum_{i,j=1}^3 E_{ij}\otimes E_{ji}$. Let $\eta:\M{3}\to\M{3}$ be a linear map given by
     \begin{align*}
         \eta(X):=\frac{1}{2}(\tr(X)I_3-X^{\T}), ~~~~~~\forall X\in\M{3}.
     \end{align*}
    Then it is easy to see that $\mcl{L}=\mathrm{Ad}_{B^*}\circ \eta -\id$ and
    \begin{align*}
        \msc{J}(\mathrm{Ad}_{B^*}\circ \eta)&=(I_3\otimes B)\msc{J}(\eta)(I_3\otimes B^*)=\frac{1}{6}(I_3\otimes B)(I_3\otimes I_3-F)(I_3\otimes B^*)=\frac{1}{6}(I_3\otimes I_3-W),\\
    \msc{J}(\Gamma)&=\frac{1}{6}(\frac{1}{3}I_3\otimes I_3+W).
    \end{align*}  
    Therefore, 
    \begin{align*}
        \ip{\Gamma, \mcl{L}}=\ip{\Gamma,\mathrm{Ad}_{B^*}\circ \eta}-\ip{\Gamma,\mathrm{Ad}_{I_3}}
                      &=\tr(\msc{J}(\Gamma)\msc{J}(\mathrm{Ad}_{B^*}\circ \eta))-0\\
                      &=\frac{1}{36}\tr\big((W+\frac{1}{3}I_3\otimes I_3)(I_3\otimes I_3-W)\big)\\
                      &=\frac{1}{36}\tr\big(W-W^2+\frac{1}{3}(I_3\otimes I_3-W)\big)\\
                      &=\frac{1}{36}\tr(W-I_3\otimes I_3+\frac{I_3\otimes I_3}{3}-\frac{W}{3})& (\because W^2=I_3\otimes I_3)\\
                      &=\frac{1}{36}\frac{2}{3}\tr(W-I_3\otimes I_3)\\
                      &=\frac{1}{54}\big(\tr(W)-\tr(I_3\otimes I_3)\big)\\
                      &=\frac{1}{54}(3-9)=\frac{-6}{54}=\frac{-1}{9}.
    \end{align*}
\end{proof}

\begin{rmk}
    Note that there are unitaries $B\in\M{3}$  such that $\tr(B^*B^{\T})=-1$. For example, 

 $$\frac{1}{2}\Matrix{0 & 1-i & -1-i\\ -1+i & -i & 1\\ 1+i & 1 & i}, \qquad \Matrix{1&0&0\\0&0&-1\\0&1&0}, \qquad \Matrix{0&-1&0\\1&0&0\\0&0&1}.$$
    
\end{rmk}

\begin{eg}\label{eg-semigrp-not-mixed unitary}
    Let $\mcl{L}$ be a linear map on $\M{3}$  given by
    \begin{align*}
        \mcl{L}(X)=\frac{1}{2}B(\tr(X)I_3-X^{\T})B^{*}-X, ~~~~\forall X\in\M{3},
    \end{align*}
    where $B\in\M{3}$ be any unitary matrix such that $\tr(B^*B^{\T})=-1$. Now, consider the semigroup whose infinitesimal generator is $\mcl{L}$. From Theorem \ref{thm-cp-semigroup}, it follows that $\{e^{t\mcl{L}}\}_{t\geq 0}$ is a dynamical semigroup of unital quantum channels. Take $\Gamma=\frac{1}{2}(\mathrm{Ad}_{B^*}\circ\T+\delta_3)$, then from Lemma \ref{lem-example-not mixed unitary semigroup},  $\Gamma\in\msc{D}(3)$ and $\ip{\Gamma, \mcl{L}}=-\frac{1}{9}<0$. Therefore, by Theorem \ref{thm-schonberg-mixed unitary}, the semigroup $e^{t\mcl{L}}$ is not mixed unitary for some $t>0$.
\end{eg}

\begin{eg}\cite{KuMa87}\label{eg-semigp-not-mu-kumm}
    Let $A\in\M{d}$ be a normal matrix whose spectrum is neither contained in a circle nor on a straight line. Take the linear map $\mcl{L}$ on $\M{d}$
    \begin{align*}
        \mcl{L}(X)=A^*XA-\frac{1}{2}(A^*AX+XA^*A), ~~~\forall X\in\M{d}.
    \end{align*}
    Then the semigroup of unital quantum channel $e^{t\mcl{L}}$ is not mixed unitary for some $t>0$.  
\end{eg}

We have shown in the above Theorem \ref{thm-semigroup-not mu-intially}, that there exists $t_0$ such that $e^{t\mcl{L}}$ is not mixed unitary for any $t\in (0,t_0)$.  We now give an example of a semigroup for which we determine $t_0$ such that $e^{t\mcl{L}}$ is not mixed unitary for any $t\in (0,t_0)$.

\begin{eg}
Let $\mcl{L}$ be the linear map on $\M{3}$ as given in the Lemma \ref{lem-example-not mixed unitary semigroup}. That is,
    \begin{align*}
        \mcl{L}(X)=\frac{1}{2}B(\tr(X)I_3-X^{\T})B^{*}-X, ~~~~\forall X\in\M{3},
    \end{align*} where $B=\Matrix{1&0&0\\0&0&-1\\0&1&0}$. Consider the semigroup $e^{t\mcl{L}}$. According to Theorem \ref{thm-cp-semigroup}, this family constitutes a dynamical semigroup of unital quantum channels. Let $t_0\approx 1.4034$ be the positive root of the function $e^{t}-e^{-\frac{t}{2}}-3\sinh(\frac{t}{2})-2\sin(\frac{t}{2})$.\\
    \textbf{Claim:} $e^{t\mcl{L}}$ is not mixed unitary, for all $0<t<t_0$.\\
    \textbf{Proof of the claim :}
    Write $\mcl{L}=\mcl{L}_1+\mcl{L}_2+\mcl{L}_3$, where $\mcl{L}_j$ be the linear maps on $\M{3}$ given by
 \begin{align*}
     \mcl{L}_1&:=-\frac{1}{2}\mathrm{Ad}_{B^*}\circ\T, \qquad
     \mcl{L}_2:=\frac{3}{2}\delta_3,\qquad
     \mcl{L}_3:=-\id.
 \end{align*}
 Then  $\mcl{L}_j$'s commute with each other, and hence $e^{t\mcl{L}}=e^{t\mcl{L}_1}\circ e^{t\mcl{L}_2}\circ e^{t\mcl{L}_3}$. Consider,
 \begin{align*}
  e^{t\mcl{L}_3}&=\lim_{n}(\id+\sum_{k=1}^n\frac{(-t)^k}{k!}\id)=\lim_{n}(1+\sum_{k=1}^n\frac{(-t)^k}{k!})\id=e^{-t}\id,\\
  e^{t\mcl{L}_2}&=\lim_{n}(\id+\sum_{k=1}^n\frac{(3t)^k}{2^k k!}\delta_3^k)
               =\lim_{n}(\id+\sum_{k=1}^n\frac{(3t)^k}{2^k k!}\delta_3)
               =\id+(e^{\frac{3}{2}t}-1)\delta_3.
 \end{align*}
 Now to compute $e^{t\mcl{L}_1}$, first note that $\mathrm{Ad}_{B^{\T}}\circ\T=\mathrm{Ad}_{B^*}\circ\T=\T\circ\mathrm{Ad}_{B^{\T}}$, $B^2=(B^*)^2=(B^{\T})^2, B=(B^*)^3=(B^{\T})^3$ and $BB^{\T}=I_3=(B^{\T})^4$. This gives us,
 \begin{align*}
     (\mathrm{Ad}_{B^*}\circ \T)^n&=\begin{cases*}
                            \id & \quad \text{if} n=0 mode 4\\
                            \mathrm{Ad}_{B^{\T}}\circ\T &\quad \text{if} n=1 mode 4\\
                            \mathrm{Ad}_{(B^{\T})^2} &\quad \text{if} n=2 mode 4\\
                            \mathrm{Ad}_{B}\circ\T &\quad \text{if} n=3 mode 4.
                          \end{cases*}
 \end{align*}
 Therefore,
   \begin{align*}
    e^{t\mcl{L}_1}&=\lim_{n}(\id+\sum_{k=1}^n\frac{(-t)^k}{2^k k!}(\mathrm{Ad}_B\circ\T)^k)\\
            &=\id+\sum_{k=0 \text{mode} 4}\frac{(-t)^k}{2^k k!}\id+\sum_{k=1 \text{mode} 4}\frac{(-t)^k}{2^k k!}\mathrm{Ad}_{B^{\T}}\circ\T+\sum_{k=2 \text{mode} 4}\frac{(-t)^k}{2^k k!}\mathrm{Ad}_{(B^{\T})^2}+\sum_{k=3 \text{mode} 4}\frac{(-t)^k}{2^k k!}\mathrm{Ad}_{B}\circ\T\\
            &=a_t\id+b_t\mathrm{Ad}_{B^{\T}}\circ \T+c_t\mathrm{Ad}_{(B^{\T})^2}+d_t\mathrm{Ad}_{B}\circ\T, 
   \end{align*}
   and consequently,
   \begin{align*}
       e^{t\mcl{L}}=(1-e^{-\frac{3}{2}t})\delta_3+e^{-t}(a_t\id+b_t\mathrm{Ad}_{B^{\T}}\circ \T+c_t\mathrm{Ad}_{(B^{\T})^2}+d_t\mathrm{Ad}_{B}\circ\T),
   \end{align*}
   where
   \begin{align*}
       a_t=\sum_{n=0}^{\infty}\frac{t^{4n}}{2^{4n}(4n)!}, \quad
       b_t=\sum_{n=0}^{\infty}\frac{(-t)^{4n+1}}{2^{4n+1}(4n+1)!},\quad
       c_t=\sum_{n=0}^{\infty}\frac{t^{4n+2}}{2^{4n+2}(4n+2)!},\quad
       d_t=\sum_{n=0}^{\infty}\frac{(-t)^{4n+3}}{2^{4n+3}(4n+3)!}.
   \end{align*}

Let $\Gamma$ be the map given in the Lemma \ref{lem-example-not mixed unitary semigroup}. That is, $ \Gamma=\frac{1}{2}(\mathrm{Ad}_{B^*}\circ\T+\delta_3)$, then we have 
\begin{align*}
    \ip{\mathrm{Ad}_{(B^{\T})^2},\Gamma}&=\frac{1}{2}\ip{\id,\mathrm{Ad}_{B^2}\circ (\mathrm{Ad}_{B^*}\circ\T+\delta_3)}
                                               =\frac{1}{2}\ip{\id,\mathrm{Ad}_{B}\circ\T+\delta_3)}
                                               =\frac{1}{2}(\ip{\id,\mathrm{Ad}_{B}\circ\T}+\ip{\id,\delta_3})\\
                                                &=\frac{1}{2}\big(\tr(\msc{J}(\id)\msc{J}(\mathrm{Ad}_{B} \circ\T))+\tr(\msc{J}(\id)\msc{J}(\delta_3))\\
                                 &=\frac{1}{6}\big(\ip{\Omega_3,(I_3\otimes B^*)F(I_3\otimes B)\Omega_3}+\frac{1}{3}\ip{\Omega_3,\Omega_3}\big) \qquad(\text{here }F=\sum_{i,j=1}^3E_{ij}\otimes E_{ji})\\
                                    &=\frac{1}{6}\big(\frac{\tr(\ol{B}B)}{3}+\frac{1}{3}\big)=\frac{1}{6}\big(\frac{-1}{3}+\frac{1}{3}\big)=0, 
\end{align*}

\begin{align*}
      \ip{\mathrm{Ad}_{B^{\T}}\circ\T,\Gamma}=\ip{\T,\mathrm{Ad}_{B}\circ\Gamma}&=\frac{1}{2}\ip{\T,\mathrm{Ad}_{B}\circ (\mathrm{Ad}_{B^*}\circ\T+\delta_3)}\\
                                     &=\frac{1}{2}\big(\ip{\id,\id+\delta_3}\big)
                                     =\frac{1}{2}\big(\abs{\ip{\Omega_3,\Omega_3}}^2+\frac{1}{9}\ip{\Omega_3,\Omega_3}\big)
                                     =\frac{1}{2}(1+\frac{1}{9})
                                     =\frac{10}{18},
\end{align*}

and 
\begin{align*}
     \ip{\mathrm{Ad}_{B}\circ\T,\Gamma}=&\ip{\T,\mathrm{Ad}_{B^*}\circ\Gamma}=\frac{1}{2}\ip{\T,\mathrm{Ad}_{B^*}\circ (\mathrm{Ad}_{B^*}\circ\T+\delta_3)}\\
                                     &=\frac{1}{2}\big(\ip{\T,\mathrm{Ad}_{(B^2)^*}\circ\T+\delta_3}\big)
                                     =\frac{1}{2}\big(\ip{\id,\mathrm{Ad}_{(B^2)^*}}+\ip{\id,\delta_3}\big)\\
                                     &=\frac{1}{2}\big(\abs{\ip{(I_3\otimes B^2)\Omega_3,\Omega_3}}^2+\frac{1}{9}\ip{\Omega_3,\Omega_3}\big)
                                     =\frac{1}{2}\big(\frac{1}{9}\abs{\tr(\ol{B}^2)}^2+\frac{1}{9}\big)
                                     =\frac{1}{9}.
\end{align*}

Therefore, for all $t\in(0,t_0)$ we get,
\begin{align*}
    \ip{e^{t\mcl{L}},\Gamma}&=(1-e^{-\frac{3}{2}t})\ip{\delta_3,\Gamma}+e^{-t}\big\{a_t\ip{\id,\Gamma}+b_t\ip{\mathrm{Ad}_{B^{\T}}\circ\T,\Gamma}+c_t\ip{\mathrm{Ad}_{(B^{\T})^2},\Gamma}+d_t\ip{\mathrm{Ad}_{B}\circ\T,\Gamma} \big\}\\
                 &=(1-e^{-\frac{3}{2}t})\frac{1}{9}+e^{-t}(\frac{10}{18}b_t+\frac{1}{9}d_t)\qquad(\because \ip{\id,\Gamma}=0)\\
                 &=\frac{e^{-t}}{9}\big\{e^{t}-e^{-\frac{t}{2}}+5b_t+d_t\big\}\\
                 &=\frac{e^{-t}}{9}\big\{e^{t}-e^{-\frac{t}{2}}+4b_t-\sinh(\frac{t}{2})\big\} \qquad (\because b_t+d_t= \sinh(\frac{-t}{2}))\\
                  &=\frac{e^{-t}}{9}\big\{e^{t}-e^{-\frac{t}{2}}-\frac{4}{2}(\sinh(\frac{t}{2})+\sin(\frac{t}{2}))-\sinh(\frac{t}{2})\big\} \qquad (\because 2b_t=-\sinh(\frac{t}{2})-\sin(\frac{t}{2}))\\
                  &=\frac{e^{-t}}{9}\big\{e^{t}-e^{-\frac{t}{2}}-3\sinh(\frac{t}{2})-2\sin(\frac{t}{2})\big\}
                  <0
\end{align*}
Now, from Lemma \ref{lem-interior-dual}, it follows that  $e^{t\mcl{L}}$ is not mixed unitary for all $0<t<t_0$.
\end{eg}

%%%%%%%%%%%%%%%%%%%%%%%%%%%%%%%%%%%%%%%%%%%%%%%%%%%%%%%%%%%%%%%%%%%%%%%%%%%%%%%%%%%%%%%%%%%%%%%%%%%%%%%%%%%

\section{Semigroups of mixed Weyl-unitary channels and Schur channels}

In this Section we study some special classes of channels which are of interest in quantum infomration theory. A mixed unitary channel is a convex combination of maps of the form $\mathrm{Ad}_{U}$, where $U$ can be arbitrary unitary. Let $\msc{S}$ be a subset of the group of all $d\times d$ unitaries $\mbb{U}(d)$. One can ask, What are all the channels that can be written as convex combinations of maps $\mathrm{Ad}_{U}$, where $U\in\msc{S}$? In this context we recall the notion of Weyl unitaries.

Let $\mathbb {Z}_d=\{0, 1, \ldots , d-1\}$ be the cyclic group of order $d$, and let $\xi =e^{\frac{2\pi i}{d}}$ be the primitive $d$-th root of unity.  We consider the Hilbert space $\mathbb{C}^d$ with orthonormal basis indexed by $\mathbb{Z}_d$: $\{e_j: 0\leq j \leq (d-1)\}$. For every $a,b\in\mbb{Z}_d$, define the unitary operators, $U,V$, and $W_{a, b}$ on  $\mbb{C}^d$ as follows:
\begin{align*}
    U(e_j)&=e_{j+1};\\
    V(e_j)&=\xi^je_j;\\
    W_{a,b}(e_j)&=U^aV^b(e_j),  \qquad  ~\forall j,a,b\in\mbb{Z}_d.
\end{align*}
The operators $W_{a, b}$ are known as Weyl unitaries. The usefulness of Weyl unitaries in quantum information theory is well-known (See \cite{Wat18}). They have the following key properties:

\begin{enumerate}[label=(\roman*)]
    \item (Adjoint) $W_{a,b}^*=\xi^{ab}W_{-a,-b}$, for all $a,b\in\mbb{Z}_d$;
    \item (Commutation relations) $W_{a,b}W_{a',b'}=\xi^{ba'-ab'}W_{a',b'}W_{a,b}$, for all $a,b,a',b'\in\mbb{Z}_d$;
    \item (Orthogonality) $\ip{W_{a,b},W_{a',b'}}=\tr(W_{a,b}^*W_{a',b'})=\begin{cases*}
                                 d & \qquad \text{if} (a,b)=(a',b')\\
                                 0 & \qquad\text{otherwise}
                                 \end{cases*}$.
                                 
\end{enumerate}

The orthogonality condition tells us that, the set $\{\frac{1}{\sqrt{d}}W_{a,b}: a,b\in\mbb{Z}_d\}$ is an orthonormal basis for $\M{d}$ with respect to the Hilbert-Schmidt inner product. For any $a,b,a',b'\in\mbb{Z}_d$,  define $\phi_{(a,b),(a',b')}$ on $\M{d}$ as
\begin{equation*}
    \phi_{(a,b),(a',b')}(X)=W_{a,b}XW_{a',b'}^* \qquad ~\forall X\in\M{d}.
\end{equation*}
Observe that the family $\{\phi_{(a,b),(a',b')}:a,b,a',b'\in\mbb{Z}_d\}$ forms a basis for $\B{\M{d}}$. Therefore, any linear map $\Phi$ on $\M{d}$ is written as $\Phi=\sum_{a,b,a',b'\in\mbb{Z}_d}\lambda_{(a,b),(a',b')}  \phi_{(a,b),(a',b')}$. We let $\lambda_{(a,b),(a,b)}=\lambda_{a,b}$ and $\phi_{(a,b),(a,b)}=\phi_{a,b}$. 

\begin{defn}
 A linear map $\Phi:\M{d}\to\M{d}$ is said to be
\begin{enumerate}[label=(\roman*)]
    \item \it{mixed Weyl-unitary} if there exist positive scalars $\lambda_1, \lambda_2,\ldots,\lambda_l$ add up to $1$ and Weyl-unitaries $U_1, U_2,\ldots, U_l$ in $\M{d}$ such that $\Phi(X)=\sum_{i=1}^{l}\lambda_i U_iX U_i^*$ for all $X\in \M{d}$.
    \item \it{Weyl-covariant} if $\Phi(W_{a,b}XW_{a,b}^*)=W_{a,b}\Phi(X)W_{a,b}^*$ for all $X\in \M{d}$ and $a,b\in \mbb{Z}_d$.
\end{enumerate}
   
\end{defn}

\begin{prop}\cite{Wat18}\label{prop-mix-weyl-covari}
Let $\Phi$ be a unital quantum channel on $\M{d}$. Then the following are equivalent:
\begin{enumerate}[label=(\roman*)]
    \item $\Phi$ is a mixed Weyl-unitary;
    \item $\Phi$ is Weyl-covariant;
    \item $\Phi=\sum_{a,b\in\mbb{Z}_d}\lambda_{a,b}  \phi_{a,b}$ with $\lambda_{a,b}\geq 0$ such that $\sum_{a,b\in\mbb{Z}_d}\lambda_{a,b}=1$.
\end{enumerate}
\end{prop}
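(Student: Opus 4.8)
The plan is to prove the three conditions equivalent by running the cycle $(iii)\Rightarrow(i)\Rightarrow(ii)\Rightarrow(iii)$. The first implication is immediate: the expression in $(iii)$ already displays $\Phi$ as a convex combination of the Weyl-unitary conjugations $\phi_{a,b}=\mathrm{Ad}_{W_{a,b}^*}$, so $\Phi$ is mixed Weyl-unitary. For $(i)\Rightarrow(ii)$ I would write $\Phi(X)=\sum_i\lambda_i W_{a_i,b_i}XW_{a_i,b_i}^*$ and, for any fixed Weyl unitary $W_{c,d}$, observe that the $i$-th term of $\Phi(W_{c,d}XW_{c,d}^*)$ equals $\lambda_i(W_{a_i,b_i}W_{c,d})X(W_{a_i,b_i}W_{c,d})^*$. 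Substituting the commutation relation $W_{a_i,b_i}W_{c,d}=\xi^{b_ic-a_id}W_{c,d}W_{a_i,b_i}$ and using $\abs{\xi}=1$, the scalar phase cancels against its conjugate, so the term becomes $\lambda_i W_{c,d}\phi_{a_i,b_i}(X)W_{c,d}^*$; summing gives $W_{c,d}\Phi(X)W_{c,d}^*$, i.e. Weyl-covariance.

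The heart of the argument is $(ii)\Rightarrow(iii)$. Since $\{\phi_{(a,b),(a',b')}\}$ is a basis of $\B{\M{d}}$, I would expand $\Phi=\sum_{(a,b),(a',b')}\mu_{(a,b),(a',b')}\,\phi_{(a,b),(a',b')}$ and analyse the action of the conjugations $\Theta_{c,d}(\Psi):=\mathrm{Ad}_{W_{c,d}}\circ\Psi\circ\mathrm{Ad}_{W_{c,d}^*}$ on $\B{\M{d}}$, noting that $\Phi$ is Weyl-covariant exactly when $\Theta_{c,d}(\Phi)=\Phi$ for every $c,d$. Using $W_{c,d}^*W_{a,b}W_{c,d}=\xi^{bc-ad}W_{a,b}$ (and its adjoint on the other factor), a direct computation shows each basis map is an eigenvector, $\Theta_{c,d}(\phi_{(a,b),(a',b')})=\xi^{(b-b')c-(a-a')d}\,\phi_{(a,b),(a',b')}$. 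For a basis map to be fixed by every $\Theta_{c,d}$ one needs $(b-b')c-(a-a')d\equiv 0 \pmod d$ for all $c,d\in\mbb{Z}_d$, which forces $a=a'$ and $b=b'$. Hence all off-diagonal coefficients vanish and $\Phi=\sum_{a,b}\mu_{a,b}\phi_{a,b}$ with $\mu_{a,b}:=\mu_{(a,b),(a,b)}$.

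It then remains to show $\mu_{a,b}\ge 0$ and $\sum_{a,b}\mu_{a,b}=1$, and for this I would pass to Choi matrices. With $\Omega_d=\frac{1}{\sqrt d}\sum_j e_j\otimes e_j$ and $\Omega_{a,b}:=(I_d\otimes W_{a,b})\Omega_d$, one computes $\msc{J}(\phi_{a,b})=\ranko{\Omega_{a,b}}{\Omega_{a,b}}$, and the orthogonality relation $\tr(W_{a,b}^*W_{a',b'})=d\,\delta_{(a,b),(a',b')}$ shows that the $d^2$ vectors $\{\Omega_{a,b}\}$ are orthonormal, hence form an orthonormal basis of $\mbb{C}^d\otimes\mbb{C}^d$. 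Therefore $\msc{J}(\Phi)=\sum_{a,b}\mu_{a,b}\ranko{\Omega_{a,b}}{\Omega_{a,b}}$ is literally a spectral decomposition, with eigenvalues $\mu_{a,b}=\ip{\Omega_{a,b},\msc{J}(\Phi)\Omega_{a,b}}$. Complete positivity of $\Phi$ means $\msc{J}(\Phi)\ge 0$, giving $\mu_{a,b}\ge 0$, while trace-preservation gives $\tr(\msc{J}(\Phi))=\sum_{a,b}\mu_{a,b}=1$, which is $(iii)$.

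The main obstacle is precisely this $(ii)\Rightarrow(iii)$ step, where two things must be secured at once: the vanishing of the off-diagonal coefficients and the nonnegativity and normalization of the diagonal ones. The decisive observation that unlocks both is that the Weyl-covariance twirl diagonalizes $\Phi$ in the Weyl basis, and that the resulting diagonal Choi expansion over the generalized Bell basis $\{\Omega_{a,b}\}$ is automatically an orthonormal spectral decomposition, so that the complete-positivity and trace-preservation of the channel translate directly into $\mu_{a,b}\ge 0$ and $\sum_{a,b}\mu_{a,b}=1$ without any further estimation.
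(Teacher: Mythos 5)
Your proof is correct. The paper itself gives no proof of this proposition---it is quoted directly from Watrous \cite{Wat18}---and your argument (the twirl $\Theta_{c,d}$ diagonalizes $\B{\M{d}}$ in the basis $\phi_{(a,b),(a',b')}$ with eigenvalues $\xi^{(b-b')c-(a-a')d}$, killing all off-diagonal coefficients, after which positivity and normalization of the diagonal coefficients follow from the Choi matrix being diagonal in the orthonormal generalized Bell basis $\{\Omega_{a,b}\}$) is the standard route, essentially the one taken in the cited reference, so there is nothing to flag.
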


\begin{lem}
    The following holds: 
    \begin{enumerate}[label=(\roman*)]
         \item The set of all mixed Weyl-unitaries is a compact, convex set of the real vector space $\msc{H}(d)$.
         \item The completely depolarizing channel $\delta_d$ is mixed Weyl-unitary.
         \item For a unital channel $\Phi$ on $\M{d}$ we have: $\Phi$ is mixed Weyl-unitary if and only if $\ip{\Gamma,\Phi}\geq 0$ for all unital and trace-preserving  $\Gamma\in\msc{H}(d)$ with $\ip{\Gamma,\mathrm{Ad}_U}\geq 0$ for all Weyl-unitaries $U\in\mbb{U}(d)$.
    \end{enumerate}
\end{lem}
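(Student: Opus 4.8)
The plan is to handle the three parts in order, using the Choi--Jamio\l kowski machinery of Theorem \ref{thm-C-J-map} throughout and mirroring the structure of Lemma \ref{lem-interior-dual}. For part (i), the decisive observation is that there are only finitely many Weyl-unitary channels: the maps $\phi_{a,b}$, $(a,b)\in\mbb{Z}_d\times\mbb{Z}_d$, number $d^2$, and every other Weyl unitary equals some $W_{a,b}$ up to a scalar phase that cancels in $X\mapsto W_{a,b}XW_{a,b}^*$. By Proposition \ref{prop-mix-weyl-covari} the set of mixed Weyl-unitaries is exactly $\mrm{co}\{\phi_{a,b}:(a,b)\in\mbb{Z}_d\times\mbb{Z}_d\}$, the convex hull of a finite subset of the finite-dimensional real space $\msc{H}(d)$. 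Such a convex hull is a polytope, hence compact and convex, which is all that is asserted.

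For part (ii) I would establish the Weyl-twirl identity $\frac{1}{d^2}\sum_{a,b\in\mbb{Z}_d}\phi_{a,b}=\delta_d$, which exhibits $\delta_d$ as the uniform convex combination of the $d^2$ Weyl-unitary channels. To verify it, expand an arbitrary $X\in\M{d}$ in the orthonormal Weyl basis and use the commutation relation $W_{a,b}W_{a',b'}=\xi^{ba'-ab'}W_{a',b'}W_{a,b}$ to obtain $W_{a,b}W_{a',b'}W_{a,b}^*=\xi^{ba'-ab'}W_{a',b'}$; summing the phase over $(a,b)$ gives $\sum_{a,b}\xi^{ba'-ab'}=d^2\delta_{a',0}\delta_{b',0}$, so only the $(a',b')=(0,0)$ term survives and one is left with $\frac{\tr(X)}{d}I_d$. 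This is a short, purely computational step.

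Part (iii) is the heart of the lemma. The forward implication is immediate: if $\Phi$ is mixed Weyl-unitary then by Proposition \ref{prop-mix-weyl-covari} one has $\Phi=\sum_{a,b}\lambda_{a,b}\phi_{a,b}$ with $\lambda_{a,b}\geq 0$, and since each $\phi_{a,b}$ equals $\mathrm{Ad}_U$ for the Weyl unitary $U=W_{a,b}^*$, any admissible $\Gamma$ satisfies $\ip{\Gamma,\Phi}=\sum_{a,b}\lambda_{a,b}\ip{\Gamma,\mathrm{Ad}_{W_{a,b}^*}}\geq 0$. For the converse I would argue by contraposition and run the Hahn--Banach separation of Lemma \ref{lem-interior-dual}(iii), with $\msc{MU}(d)$ replaced by the compact convex set $\mcl{MW}$ of mixed Weyl-unitaries from part (i) and the full unitary group replaced by the Weyl unitaries. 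Concretely, assuming $\Phi$ is a unital channel that is not mixed Weyl-unitary, set $\mcl{M}=\{\Psi-\delta_d:\Psi\in\mcl{MW}\}$, a compact convex subset of the real vector space $\mcl{N}=\{\Psi-\delta_d:\Psi\in\msc{W}(d)\}$; here part (ii) guarantees $\delta_d\in\mcl{MW}$, so $0\in\mcl{M}$ and the translate lands inside $\mcl{N}$. Applying $\msc{J}$ and the separation theorem yields $Y\in\msc{J}(\mcl{N})$ with $\tr(YZ)\leq 1<\tr(Y\msc{J}(\Phi-\delta_d))$ for all $Z\in\msc{J}(\mcl{M})$, and putting $W=\tfrac{1}{d^2}(I_{d^2}-Y)$, $\Gamma=\msc{J}^{-1}(W)$ produces, exactly as in Lemma \ref{lem-interior-dual}(iii), a Hermitian, unital, trace-preserving map with $\ip{\Gamma,\Phi}=\tfrac{1}{d^2}\bigl(1-\tr(Y\msc{J}(\Phi-\delta_d))\bigr)<0$.

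The step I expect to be the crux is checking that this $\Gamma$ satisfies the \emph{restricted} dual condition $\ip{\Gamma,\mathrm{Ad}_U}\geq 0$ for every Weyl unitary $U$ only, since the separation no longer runs over all unitaries. The point is that a single Weyl-unitary channel $\mathrm{Ad}_{W_{a,b}}$ is itself a mixed Weyl-unitary, so $\msc{J}(\mathrm{Ad}_{W_{a,b}}-\delta_d)\in\msc{J}(\mcl{M})$ and the separation inequality gives $\tr\bigl(Y\msc{J}(\mathrm{Ad}_{W_{a,b}}-\delta_d)\bigr)\leq 1$; the same computation as in Lemma \ref{lem-interior-dual}(iii), using $\tr(Y)=0$ and $\tr(\msc{J}(\mathrm{Ad}_{W_{a,b}}))=1$, then yields $\ip{\Gamma,\mathrm{Ad}_{W_{a,b}}}\geq 0$. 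Thus $\Gamma$ is an admissible test map with $\ip{\Gamma,\Phi}<0$, contradicting the hypothesis and forcing $\Phi\in\mcl{MW}$. This is precisely where parts (i) and (ii) feed in: compactness and convexity make the separation available, and $\delta_d\in\mcl{MW}$ makes the translate the correct reference set.
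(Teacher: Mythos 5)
Your proposal is correct, and on the crux, part (iii), it is exactly the route the paper takes: the paper's proof of (iii) consists of the single line that it ``follows similar to'' Lemma \ref{lem-interior-dual}(iii), and what you spell out --- the Hahn--Banach separation applied to $\mcl{M}=\{\Psi-\delta_d:\Psi\in\mcl{MW}\}$ inside $\mcl{N}$, the construction $\Gamma=\msc{J}^{-1}\bigl(\tfrac{1}{d^2}(I_{d^2}-Y)\bigr)$, and the observation that the restricted dual condition $\ip{\Gamma,\mathrm{Ad}_{W_{a,b}}}\geq 0$ comes for free because each $\mathrm{Ad}_{W_{a,b}}$ itself lies in the mixed-Weyl set, so $\msc{J}(\mathrm{Ad}_{W_{a,b}}-\delta_d)\in\msc{J}(\mcl{M})$ falls under the separation inequality --- is precisely the intended adaptation. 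Parts (i) and (ii) are where you genuinely diverge. For (i), the paper notes that Weyl-covariant channels form a closed convex subset of $\msc{H}(d)$ and invokes Proposition \ref{prop-mix-weyl-covari} to identify mixed Weyl-unitaries with unital Weyl-covariant channels; you instead observe that the generating family $\{\phi_{a,b}\}$ is finite (at most $d^2$ maps, phases cancelling under conjugation), so the set is the convex hull of finitely many points, a polytope, hence compact. Your argument is more elementary, needing only the definition rather than the covariance characterization, and the finiteness you exploit is the same fact the paper itself uses later (in Theorem \ref{thm-mixed-Weyl}) to dispense with closures of cones. For (ii), the paper expands $I_d\otimes I_d$ in the orthonormal basis of maximally entangled vectors $w_{a,b}$ and applies $\msc{J}^{-1}$, whereas your twirl computation with the character sum $\sum_{a,b}\xi^{ba'-ab'}=d^2\delta_{a',0}\delta_{b',0}$ proves the same identity $\delta_d=\tfrac{1}{d^2}\sum_{a,b}\mathrm{Ad}_{W_{a,b}}$ directly at the operator level, bypassing Choi matrices. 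Both are short; the paper's version keeps the bookkeeping inside the Choi picture, which is the language the dual-cone framework is phrased in, while yours is self-contained algebraically. No gaps in any of the three parts.
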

\begin{proof}
    $(i)$ It is easy to see that the set of all Weyl-covariant channels is a closed and convex set of the real vector space $\msc{H}(d)$. Now, from Proposition \ref{prop-mix-weyl-covari}, the set of mixed Weyl-unitaries is equal to the set of unital Weyl-covariant channels. Thus, the set of mixed Weyl-unitaries is a closed (hence compact) convex set.\\
    $(ii)$ Since $\{\frac{1}{\sqrt{d}}W_{a,b}: a,b\in\mbb{Z}_d\}$ is an orthonormal basis of $\M{d}$ with respect to the Hilbert-Schmidt inner product, we have 
    \begin{equation}\label{eq-choi-delta}
        I_d\otimes I_d=\sum_{a,b\in\mbb{Z}_d}\frac{1}{d}\ranko{w_{a,b}}{w_{a,b}},
    \end{equation}
    where $w_{a,b}:=\sum_{j\in\mbb{Z}_d}e_j\otimes W_{a,b}^*e_j$. Observe that $\frac{1}{d}\ranko{w_{a,b}}{w_{a,b}}=\msc{J}(\mathrm{Ad}_{W_{a,b}})$ and $\frac{1}{d^2}I_d\otimes I_d=\msc{J}(\delta_d)$. Therefore, by applying $\msc{J}^{-1}$ to the equation \eqref{eq-choi-delta}, we get $\delta_d=\sum_{a,b\in\mbb{Z}_d}\frac{1}{d^2}\mathrm{Ad}_{W_{a,b}}.$\\
    $(ii)$ Proof follows similar to the proof of \ref{lem-interior-dual} (iii).
\end{proof}

\begin{thm}\label{thm-mixed-Weyl}
Let $e^{t\mcl{L}}$ be a dynamical semigroup of unital quantum channels. Then the following are equivalent:
\begin{enumerate}[label=(\roman*)]
    \item $e^{t\mcl{L}}$ is mixed Weyl-unitary for every $t\geq 0$;
    \item $\mcl{L}$ is a non-negative linear combination of maps of the form $\mathrm{Ad}_W-\id$, where $W$ is a Weyl unitary;
    \item $\mcl{L}$ is a Weyl-covariant linear map.
\end{enumerate}
\end{thm}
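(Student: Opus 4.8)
The plan is to run a cyclic argument $(iii)\Rightarrow(i)\Rightarrow(ii)\Rightarrow(iii)$, using Proposition \ref{prop-mix-weyl-covari} as the bridge between the mixed-Weyl-unitary property and Weyl-covariance at each fixed time. The reformulation I would use throughout is that a linear map $\Phi$ on $\M{d}$ is Weyl-covariant exactly when it commutes with every conjugation $\phi_{a,b}$, i.e.\ $\Phi\circ\phi_{a,b}=\phi_{a,b}\circ\Phi$ for all $a,b\in\mbb{Z}_d$, since $\phi_{a,b}(X)=W_{a,b}XW_{a,b}^*$.

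For $(iii)\Rightarrow(i)$: if $\mcl{L}$ is Weyl-covariant then $\mcl{L}$ commutes with each $\phi_{a,b}$, hence so does $e^{t\mcl{L}}=\sum_{n\geq 0}t^n\mcl{L}^n/n!$ for every $t\geq 0$. Thus each $e^{t\mcl{L}}$ is Weyl-covariant, and being a unital quantum channel by hypothesis, Proposition \ref{prop-mix-weyl-covari} forces it to be mixed Weyl-unitary. For $(ii)\Rightarrow(iii)$: it suffices to check that each generator $\mathrm{Ad}_W-\id$ with $W$ a Weyl unitary is Weyl-covariant, because Weyl-covariance is a linear condition, stable under non-negative combinations, and $\id$ is trivially Weyl-covariant. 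I would verify $\mathrm{Ad}_W\circ\phi_{a,b}=\phi_{a,b}\circ\mathrm{Ad}_W$ directly from the commutation relation $W_{a,b}W=\omega\,WW_{a,b}$ with $\abs{\omega}=1$: conjugating $X$ first by $W$ and then by $W_{a,b}$ produces the scalar $\omega$ on one side and its conjugate $\overline{\omega}$ on the other, and these cancel because $\mathrm{Ad}$ is insensitive to scalar multiples of unitaries.

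The substantive step is $(i)\Rightarrow(ii)$. Assuming $e^{t\mcl{L}}$ is mixed Weyl-unitary for all $t$, Proposition \ref{prop-mix-weyl-covari} gives $e^{t\mcl{L}}=\sum_{a,b\in\mbb{Z}_d}\lambda_{a,b}(t)\phi_{a,b}$ with $\lambda_{a,b}(t)\geq 0$ and $\sum_{a,b}\lambda_{a,b}(t)=1$. Since $W_{0,0}=I$ we have $\phi_{0,0}=\id$, so using $\sum_{a,b}\lambda_{a,b}(t)=1$ I can rewrite
\[
e^{t\mcl{L}}-\id=\sum_{(a,b)\neq(0,0)}\lambda_{a,b}(t)\big(\phi_{a,b}-\id\big),
\]
so that $\tfrac{1}{t}(e^{t\mcl{L}}-\id)$ lies, for every $t>0$, in the cone generated by the finite set $\{\phi_{a,b}-\id:(a,b)\neq(0,0)\}$. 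A finitely generated cone in the finite-dimensional space $\B{\M{d}}$ is closed, so letting $t\to 0^+$ shows $\mcl{L}=\lim_{t\to0^+}\tfrac{1}{t}(e^{t\mcl{L}}-\id)$ belongs to it; that is, $\mcl{L}=\sum_{(a,b)\neq(0,0)}c_{a,b}(\phi_{a,b}-\id)$ with $c_{a,b}\geq 0$. Finally, each $\phi_{a,b}$ equals $\mathrm{Ad}_{W_{-a,-b}}$ (using $W_{a,b}^*=\xi^{ab}W_{-a,-b}$, so that the phases again cancel), whence $\mcl{L}$ is a non-negative combination of maps $\mathrm{Ad}_W-\id$ with $W$ Weyl, which is $(ii)$.

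The main obstacle I anticipate is this last implication, and within it the appeal to closedness of the finitely generated cone: one must ensure the difference quotients stay in a single fixed closed cone so that the limit cannot escape it. Alternatively, since $t\mapsto e^{t\mcl{L}}$ is real-analytic and the $\phi_{a,b}$ are linearly independent, each coefficient $\lambda_{a,b}(t)$ is differentiable with $\lambda_{a,b}(0)=0$ for $(a,b)\neq(0,0)$, forcing $\lambda_{a,b}'(0)\geq 0$ and giving the coefficients $c_{a,b}=\lambda_{a,b}'(0)$ explicitly; I would use whichever of these two justifications reads more cleanly. The only remaining care is the bookkeeping of the phase factors in the commutation relations used for $(ii)\Rightarrow(iii)$ and for identifying $\phi_{a,b}$ with $\mathrm{Ad}_{W_{-a,-b}}$, all of which vanish upon passing to $\mathrm{Ad}$.
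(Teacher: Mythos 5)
Your proposal is correct and follows essentially the same route as the paper: the same cycle $(i)\Rightarrow(ii)\Rightarrow(iii)\Rightarrow(i)$, with the key step $(i)\Rightarrow(ii)$ resting on the same observation that the difference quotients $\tfrac{1}{t}(e^{t\mcl{L}}-\id)$ lie in a finitely generated (hence closed) cone, and $(iii)\Rightarrow(i)$ using that Weyl-covariance passes from $\mcl{L}$ to $e^{t\mcl{L}}$ followed by Proposition \ref{prop-mix-weyl-covari}. Your phrasing of $(i)\Rightarrow(ii)$ through the $\phi_{a,b}$ expansion of Proposition \ref{prop-mix-weyl-covari}(iii) rather than the raw definition of mixed Weyl-unitary, and your alternative argument via differentiability of the coefficients $\lambda_{a,b}(t)$, are only cosmetic variants of the paper's argument.
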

%\textcolor{red}{PLEASE SUPPLY THE PROOF FOR THIS PART. When we have a finite group, no closure is required for the convex hull.}\todo{Added}
 \begin{proof}
 $(i)\Rightarrow (ii)$
 Suppose $e^{t\mcl{L}}$ is mixed Weyl-unitary channel for every $t\geq 0$.
 Then for each $t\geq 0$, there exist finitely many Weyl unitaries $\{W_j(t)\}_{j=1}^{n(t)}$ and scalars $\{\lambda_j(t)\}_{j=1}^{n(t)}\subseteq[0,1]$ such that $\sum_{j=1}^{n(t)}\lambda_j(t)=1$ and $e^{t\mcl{L}}=\sum_{j=1}^{n(t)}\lambda_j(t)\mathrm{Ad}_{W_j(t)}$. Consequently,
\begin{align*}
    \frac{e^{t\mcl{L}}-\id}{t}=\sum_{j=1}^{n(t)}\lambda_j(t)(\mathrm{Ad}_{W_j(t)}-\id)\in \text{cone}\{\mathrm{Ad}_W-\id:W\text{ is Weyl unitary}\},
\end{align*} 
Since there are only finitely many Weyl unitaries, so the generating set is finite and hence $\text{cone}\{\mathrm{Ad}_W-\id:W\text{ is Weyl unitary}\}$ is closed. Thus, the map $\mcl{L}=\lim_{t}\frac{e^{t\mcl{L}}-\id}{t}$ is in the cone generated by $\{\mathrm{Ad}_W-\id:W \text{ is Weyl unitary}\}$.\\
 $(ii)\Rightarrow (ii)$ Note that $\mathrm{Ad}_{W}$ is Weyl-covariant map for any Weyl-unitary $W$. Consequently, any map 
 $\Psi$ in the cone generated by $\{\mathrm{Ad}_W-\id:W \text{ is Weyl unitary}\}$ is Weyl-covariant. In particular, $\mcl{L}$ is Weyl-covariant.\\  
 % Now for any $a,b\in\mbb{Z}_d$, 
 % \begin{align*}
 %     \mcl{L}(W_{a,b}XW_{a,b}^*)&=\lim _{t\to 0}\frac{e^{t\mcl{L}}(W_{a,b}XW_{a,b}^*)-W_{a,b}XW_{a,b}^*}{t}\\
 %                           & =\lim _{t\to 0}\frac{W_{a,b}e^{t\mcl{L}}(X)W_{a,b}^*-W_{a,b}XW_{a,b}^*}{t}\\
 %                           & =W_{a,b}\big(\lim _{t\to 0}\frac{e^{t\mcl{L}}(X)-X}{t}\big)W_{a,b}^*\\
 %                           &=W_{a,b}\mcl{L}(X)W_{a,b}^*.
 % \end{align*}
 % This shows that $\mcl{L}$ is a Weyl-covariant linear map.
 $(iii)\Rightarrow (i)$ Assume that $\mcl{L}$ is a Weyl-covariant linear map. This implies that $\mcl{L}^k$ is  Weyl-covariant for every $k\geq 0$. %Therefore, for every $a,b\in\mbb{Z}_d$ and $X\in\M{d}$ we have
 % \begin{align*}
 %e^{t\mcl{L}}(W_{a,b}XW_{a,b}^*)&=\lim_{n\to\infty}\sum_{k=1}^n \frac{t^k\mcl{L}^k(W_{a,b}XW_{a,b}^*)}{k!}\\
 %                         &=\lim_{n\to\infty}\sum_{k=1}^n W_{a,b}\frac{t^k\mcl{L}^kX)}{k!}W_{a,b}^*\\
 %                         &=W_{a,b}\big(\lim_{n\to\infty}\sum_{k=1}^n \frac{t^k\mcl{L}^kX)}{k!}\big)W_{a,b}^*\\
 %                         &=W_{a,b}e^{t\mcl{L}}(X)W_{a,b}^*.
 %\end{align*}
Then by power series expansion, $e^{t\mcl{L}}$ is Weyl-covairant for every $t\geq 0$ and by Proposition \ref{prop-mix-weyl-covari} it follows $e^{t\mcl{L}}$ is mixed Weyl-unitary for every $t\geq 0$.
 \end{proof}

Note that Weyl unitaries along with $d$-th roots of unity form a finite subgroup of the group of unitaries. Most results of this Section extend to such a general context.

%In this section, we present some necessary and sufficient conditions for a dynamical semigroup of unital quantum channels to consist of convex combinations of conjugations by elements of a compact subgroup $G$ of $\mbb{U}(d)$.

\begin{defn}
  Let $G$ be a subgroup of $\mbb{U}(d)$.  An unital channel $\Phi$ on $\M{d}$ is said to be \emph{$G$-mixed unitary} if if there exist $\{U_j\}_{j=1}^\ell\subseteq G$ and  $\{\lambda_j\}_{j=1}^{\ell}\subseteq [0,1]$ for some $l\in \mathbb{N}$ such that $\sum_{j=1}^\ell\lambda_j=1$ and
 \begin{align}
     \Phi(X)=\sum_{i=1}^{\ell}\lambda_i U_i^{*}X U_i, ~~~~~~~\forall X\in \M{d}.
 \end{align}
\end{defn}

Note that any $G$-mixed unitary channel is also a mixed unitary channel. Therefore, the map $\Phi$ given in the Example \ref{eg-not-mu} is an example of a non $G$-mixed unitary map on $\M{3}$.

Denote $\msc{MU}_G(d)$ be the set of all $G$-mixed unitary channels on $\M{d}$. Consider $\msc{MU}_G(d)$ as a subset of the real vector space $\msc{H}(d)$.

\begin{lem} For any compact subgroup $G$ of $\mbb{U}(d)$, we have the following;
    \begin{enumerate}[label=(\roman*)]
        \item  $\msc{MU}_G(d)$ is a compact convex set and is closed under compositions;
        \item $\msc{MU}_G(d)^{\circ}=\{\Gamma\in\msc{H}(d): \ip{\Gamma,\mathrm{Ad}_U}\geq 0~~\forall  U\in G\}$;
        \item  Suppose $\Phi\in\msc{H}(d)$ is unital and trace-preserving. Then $\Phi\in\msc{MU}_G(d)$ if and only if $\ip{\Gamma,\Phi}\geq 0$ for all unital and trace-preserving  $\Gamma\in\msc{MU}_G(d)^{\circ}$;
         \item The interior of the set $\msc{MU}_G(d)$ is empty.
    \end{enumerate}
\end{lem}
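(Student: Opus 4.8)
The plan is to run the proof of Lemma~\ref{lem-interior-dual} essentially verbatim, since $\msc{MU}_G(d)$ is the convex hull of $\{\mathrm{Ad}_U:U\in G\}$ in exactly the way $\msc{MU}(d)$ is the convex hull of $\{\mathrm{Ad}_U:U\in\mbb{U}(d)\}$. For $(i)$, compactness of $G$ makes $U\mapsto\mathrm{Ad}_U$ continuous, so $\{\mathrm{Ad}_U:U\in G\}$ is compact in $\msc{H}(d)$; in the finite-dimensional space $\msc{H}(d)$ the convex hull of a compact set is compact (Carath\'eodory), so $\msc{MU}_G(d)$ is compact and convex. Closure under composition is where the group axioms enter: $\mathrm{Ad}_U\circ\mathrm{Ad}_V=\mathrm{Ad}_{VU}$ with $VU\in G$, so the product of two convex combinations of $\mathrm{Ad}_U$, $U\in G$, is again such a combination. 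For $(ii)$, every functional on $\msc{H}(d)$ has the form $\Phi\mapsto\ip{\Gamma,\Phi}$ for a unique $\Gamma\in\msc{H}(d)$, and a linear functional is non-negative on a convex hull iff it is non-negative on the generators; this gives $\msc{MU}_G(d)^{\circ}=\{\Gamma\in\msc{H}(d):\ip{\Gamma,\mathrm{Ad}_U}\geq0\ \forall U\in G\}$, word for word as in Lemma~\ref{lem-interior-dual}$(ii)$.

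Part $(iv)$ is immediate and in fact simpler than in the full case: every $G$-mixed unitary channel is unital and trace-preserving, so $\msc{MU}_G(d)\subseteq\msc{W}(d)$, and $\msc{W}(d)$ is a proper affine subspace of $\msc{H}(d)$ (the constraint $\Phi(I_d)=I_d$ is nontrivial, e.g.\ $2\delta_d\in\msc{H}(d)\setminus\msc{W}(d)$). A subset of a proper affine subspace has empty interior. Alternatively one repeats the argument of Lemma~\ref{lem-interior-dual}$(iv)$: the isometry $\msc{J}$ cannot carry a ball of $\msc{H}(d)$ into the proper subspace cut out by the partial-trace conditions of Theorem~\ref{thm-C-J-map}$(v),(vi)$.

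The real content is $(iii)$, where I would reproduce the Hahn--Banach separation of Lemma~\ref{lem-interior-dual}$(iii)$. The forward implication is the same one-line averaging estimate. For the converse, arguing the contrapositive, I would take $\Phi\in\msc{W}(d)$ with $\Phi\notin\msc{MU}_G(d)$, set $\mcl{M}=\{\Psi-\delta_d:\Psi\in\msc{MU}_G(d)\}$ and $\mcl{N}=\{\Psi-\delta_d:\Psi\in\msc{W}(d)\}$, so $\mcl{M}$ is a compact convex subset of the real vector space $\mcl{N}$ and $\Phi-\delta_d\in\mcl{N}\setminus\mcl{M}$. Separating $\msc{J}(\Phi-\delta_d)$ from $\msc{J}(\mcl{M})$ yields $Y\in\msc{J}(\mcl{N})$, and then $\Gamma:=\msc{J}^{-1}\big(\frac{1}{d^2}(I_{d^2}-Y)\big)$ is automatically unital and trace-preserving because $Y$ has vanishing partial traces; the identity $\ip{\Gamma,\mathrm{Ad}_U}=\frac{1}{d^2}\{1-\tr(Y\,\msc{J}(\mathrm{Ad}_U-\delta_d))\}\geq0$ for $U\in G$ and the strict inequality $\ip{\Gamma,\Phi}<0$ then come out exactly as in the model proof.

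The step I expect to be the main obstacle is precisely the normalization in this separation: both the threshold-$1$ form of Hahn--Banach and the non-negativity computation rely on $0\in\msc{J}(\mcl{M})$, i.e.\ on the reference point $\delta_d$ itself lying in $\msc{MU}_G(d)$. This is automatic for $G=\mbb{U}(d)$ but can fail for small $G$: for the trivial group $G=\{I\}$ one has $\msc{MU}_G(d)=\{\id\}$, yet $\Phi=\frac12(\id+\delta_d)\notin\msc{MU}_G(d)$ satisfies $\ip{\Gamma,\Phi}=\frac12\ip{\Gamma,\id}+\frac{1}{2d^2}>0$ for every unital trace-preserving $\Gamma\in\msc{MU}_G(d)^{\circ}$ (using $\ip{\Gamma,\delta_d}=\frac{1}{d^2}$), so $(iii)$ is false as literally stated. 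The natural repair is to use the $G$-twirl $\mcl{T}_G(X)=\int_G U^*XU\,d\mu(U)$, which always lies in $\msc{MU}_G(d)$ by compactness and convexity, and to require $\mcl{T}_G=\delta_d$; by Schur's lemma this holds exactly when $G$ acts irreducibly on $\mbb{C}^d$, as the Weyl unitaries do. Under that irreducibility hypothesis $\delta_d\in\msc{MU}_G(d)$, the separation normalizes correctly, and $(iii)$ follows as above.
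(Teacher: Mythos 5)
Your proposal follows exactly the route the paper intends: the paper's entire proof of this lemma is the single sentence ``Proof is similar to the Lemma \ref{lem-interior-dual}'', and your treatment of (i), (ii) and (iv) is precisely that transcription (your shortcut for (iv) --- that $\msc{MU}_G(d)\subseteq\msc{W}(d)$, a proper affine subspace of $\msc{H}(d)$, hence has empty interior --- is a mild simplification of the same idea).

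The substantive part of your proposal is the objection to (iii), and it is correct: as stated, for an arbitrary compact subgroup $G$, part (iii) is false, so the paper's proof-by-analogy has a genuine gap. In the model proof of Lemma \ref{lem-interior-dual}(iii) the membership $\delta_d\in\msc{MU}(d)$ is what guarantees $0\in\msc{J}(\mcl{M})$ for $\mcl{M}=\{\Psi-\delta_d:\Psi\in\msc{MU}(d)\}$, and that is what makes the threshold-$1$ separation $\tr(YZ)\leq 1<\tr\bigl(Y\msc{J}(\Phi-\delta_d)\bigr)$ available (without $0\in M$ that form of Hahn--Banach fails: already in $\mbb{R}$ no linear $f$ satisfies $f(-2)\leq 1<f(-1)$); to be precise, this normalization is the step that breaks, while the subsequent computation of $\ip{\Gamma,\mathrm{Ad}_U}$ only uses $\mathrm{Ad}_U-\delta_d\in\mcl{M}$, which holds for any $G$. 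Your counterexample checks out: for $G=\{I_d\}$ one has $\msc{MU}_G(d)=\{\id\}$, while $\Phi=\frac{1}{2}(\id+\delta_d)$ lies in $\msc{W}(d)\setminus\msc{MU}_G(d)$ and satisfies $\ip{\Gamma,\Phi}=\frac{1}{2}\ip{\Gamma,\id}+\frac{1}{2d^2}>0$ for every unital trace-preserving $\Gamma\in\msc{MU}_G(d)^{\circ}$, since $\ip{\Gamma,\delta_d}=\frac{1}{d^2}\tr(\msc{J}(\Gamma))=\frac{1}{d^2}$ for any trace-preserving Hermitian-preserving $\Gamma$. In fact the failure is generic: if $G$ is reducible, choose a nontrivial projection $P$ commuting with $G$; every $\Psi\in\msc{MU}_G(d)$ fixes $P$, whereas $\frac{1}{2}(\id+\delta_d)$ does not, yet it passes the dual test by the same computation. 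So your repair --- assuming $G$ acts irreducibly on $\mbb{C}^d$, equivalently (twirl plus Schur's lemma) $\delta_d\in\msc{MU}_G(d)$ --- is not merely sufficient but necessary for (iii), and it costs nothing for the paper's purposes: the Weyl unitaries act irreducibly, as witnessed by the paper's own identity $\delta_d=\sum_{a,b}\frac{1}{d^2}\mathrm{Ad}_{W_{a,b}}$. Note that the same hypothesis must then be carried into Theorem \ref{thm-semi-G-MU}, whose proof of $(iii)\Rightarrow(i)$ invokes part (iii) of this lemma together with $\ip{\Gamma,\delta_d}=\frac{1}{d^2}$.
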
    

\begin{proof}
    The proof is similar to that of Lemma \ref{lem-interior-dual}.
\end{proof}

Denote $\msc{D}_G(d):=\{\Gamma\in\msc{W}(d): \ip{\id,\Gamma}=0, \ip{\mathrm{Ad}_U,\Gamma}\geq 0,~~\forall U\in G\}$.
\begin{lem}
Suppose $\mcl{L}\in\msc{H}(d)$ with $\ip{\Gamma, \mcl{L}}\geq 0$ for all $\Gamma\in\msc{D}_G(d)$. Then, for every $r>0$ there exists $\eta >0$ such that if  $\Gamma\in\mcl{C}(d)^{\circ}\cap\msc{W}_1(d)$ satisfies $\ip{\Gamma,\id}<\eta $ then $\ip{\Gamma,\mcl{L}}\geq -r$.
\end{lem}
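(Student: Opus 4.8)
The plan is to argue by contradiction, mirroring the proof of Lemma \ref{lem-mixed unitary-semigroup} but with the dual cone $\msc{D}_G(d)$ replacing $\msc{D}(d)$. Suppose the conclusion fails for some fixed $r>0$. Then for every $n\in\mbb{N}$ there is an element $\Gamma_n\in\mcl{C}(d)^{\circ}\cap\msc{W}_1(d)$ with $\ip{\Gamma_n,\id}<\tfrac{1}{n}$ yet $\ip{\Gamma_n,\mcl{L}}<-r$. The whole point of choosing these witnesses from $\mcl{C}(d)^{\circ}\cap\msc{W}_1(d)$ is that this set is compact: it is a closed subset of the compact sphere $\msc{W}_1(d)$, being cut out by the (closed) conditions $\ip{\mathrm{Ad}_U,\Gamma}\geq 0$ for $U$ ranging over unitaries. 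Hence some subsequence $\{\Gamma_{n_k}\}$ converges to a limit $\Gamma\in\mcl{C}(d)^{\circ}\cap\msc{W}_1(d)$.

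The key step is to check that this limit lands in $\msc{D}_G(d)$. First, $\Gamma\in\msc{W}_1(d)\subseteq\msc{W}(d)$, so it is unital, trace-preserving and Hermitian. Second, the squeeze $0\leq\ip{\Gamma,\id}=\lim_{k}\ip{\Gamma_{n_k},\id}\leq\lim_k\tfrac{1}{n_k}=0$ forces $\ip{\id,\Gamma}=0$. Third, since each $\Gamma_{n_k}$ lies in $\mcl{C}(d)^{\circ}$ we have $\ip{\mathrm{Ad}_U,\Gamma_{n_k}}\geq 0$ for every $U\in\mbb{U}(d)$, so by continuity of the (finite-dimensional) inner product $\ip{\mathrm{Ad}_U,\Gamma}\geq 0$ for all $U\in\mbb{U}(d)$, in particular for all $U\in G$. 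These are exactly the three defining conditions of $\msc{D}_G(d)$, so $\Gamma\in\msc{D}_G(d)$.

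Finally, the hypothesis on $\mcl{L}$ gives $\ip{\Gamma,\mcl{L}}\geq 0$, whereas passing to the limit in $\ip{\Gamma_{n_k},\mcl{L}}<-r$ yields $\ip{\Gamma,\mcl{L}}\leq -r<0$, a contradiction. This closes the argument.

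I do not expect a genuine obstacle here: the statement is the verbatim $G$-analogue of Lemma \ref{lem-mixed unitary-semigroup}, and the only thing that needs care is the membership $\Gamma\in\msc{D}_G(d)$. This is in fact easier than in the original lemma, because $\msc{D}_G(d)$ is carved out by \emph{fewer} positivity constraints (only $U\in G$ rather than all of $\mbb{U}(d)$); the inclusion $\mcl{C}(d)^{\circ}\subseteq\mcl{C}_G(d)^{\circ}$ coming from $\mcl{C}_G(d)\subseteq\mcl{C}(d)$ guarantees the witnesses already satisfy the stronger positivity, which survives in the limit. Hence the only mildly delicate point is simply invoking compactness of $\mcl{C}(d)^{\circ}\cap\msc{W}_1(d)$ and continuity of $\ip{\cdot,\cdot}$, both of which are immediate in finite dimensions.
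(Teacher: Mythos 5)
Your proof is correct and is essentially the paper's own argument: the paper simply says the proof is similar to Lemma \ref{lem-mixed unitary-semigroup}, which is exactly the contradiction-by-compactness argument you reproduce. Your observation that the limit point automatically lies in $\msc{D}_G(d)$ because $\mcl{C}(d)^{\circ}$ enforces positivity against \emph{all} unitaries (hence against $G$) is precisely the only point needing care, and you handle it correctly.
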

\begin{proof}
Similar to Lemma \ref{lem-mixed unitary-semigroup}. 
\end{proof}

\begin{thm}\label{thm-semi-G-MU}
     Let $\mcl{L}\in\msc{H}(d)$ be such that $\mcl{L}(I_d)=0$ and $\tr(\mcl{L}(X))=0, \forall X\in\M{d}$. Then, the following are equivalent:
     \begin{enumerate}[label=(\roman*)]
         \item $e^{t\mcl{L}}$ is $G$-mixed unitary for all $t\geq 0$;
         \item The generator $\mcl{L}$ is in the closed cone generated by $\{\mathrm{Ad}_U-\id:U\in G\}$; 
         \item $\ip{\Gamma,\mcl{L}}\geq 0$ for all $\Gamma\in\msc{D}_G(d)$.
     \end{enumerate}
 \end{thm}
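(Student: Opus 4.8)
The plan is to run the proof of Theorem~\ref{thm-schonberg-mixed unitary} essentially verbatim, making the substitutions $\mbb{U}(d)\rightsquigarrow G$, $\msc{MU}(d)\rightsquigarrow\msc{MU}_G(d)$, $\msc{D}(d)\rightsquigarrow\msc{D}_G(d)$, and replacing the cone $\mcl{C}(d)$ by the closed cone $\mcl{C}_G(d)$ generated by $\{\mathrm{Ad}_U:U\in G\}$. The only genuinely new structural input beyond the $\mbb{U}(d)$ case is that $\msc{MU}_G(d)$ remains compact, convex, and closed under composition; compactness follows from $G$ being closed (so $\{\mathrm{Ad}_U:U\in G\}$ is compact and its convex hull is compact), while closure under composition is exactly where the \emph{subgroup} hypothesis is used, via $\mathrm{Ad}_U\circ\mathrm{Ad}_V=\mathrm{Ad}_{VU}$ with $VU\in G$. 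These are the content of the $G$-analogue of Lemma~\ref{lem-interior-dual}, whose proof transfers unchanged, and I would cite it freely.

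For $(i)\Rightarrow(ii)$ I would write $e^{t\mcl{L}}=\sum_j\lambda_j(t)\mathrm{Ad}_{U_j(t)}$ with $U_j(t)\in G$ and $\sum_j\lambda_j(t)=1$, so that $\frac{e^{t\mcl{L}}-\id}{t}=\sum_j\lambda_j(t)(\mathrm{Ad}_{U_j(t)}-\id)$ lies in the cone generated by $\{\mathrm{Ad}_U-\id:U\in G\}$ for every $t>0$; letting $t\to0^+$ places $\mcl{L}=\lim_t\frac{e^{t\mcl{L}}-\id}{t}$ in its closure. For $(ii)\Rightarrow(iii)$, given $\Gamma\in\msc{D}_G(d)$ and $U\in G$, the defining conditions of $\msc{D}_G(d)$ give $\ip{\Gamma,\mathrm{Ad}_U-\id}=\ip{\Gamma,\mathrm{Ad}_U}-\ip{\Gamma,\id}=\ip{\Gamma,\mathrm{Ad}_U}\geq0$; hence $\ip{\Gamma,\cdot}$ is nonnegative on every generator, therefore on the cone, and by continuity on its closure, so in particular $\ip{\Gamma,\mcl{L}}\geq0$.

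The substance is $(iii)\Rightarrow(i)$. Since condition (iii) is invariant under $\mcl{L}\mapsto t\mcl{L}$ for $t\geq0$, it suffices to prove $e^{\mcl{L}}\in\msc{MU}_G(d)$ and then apply the result to $t\mcl{L}$. As $\msc{MU}_G(d)$ is compact and closed under composition and $e^{\mcl{L}}=\lim_n(\id+\frac{\mcl{L}}{n})^n$, it is in turn enough to show $\id+\frac{\mcl{L}}{n}\in\msc{MU}_G(d)$ for all large $n$. This map is unital, trace-preserving and Hermitian-preserving (using $\mcl{L}(I_d)=0$ and $\tr\circ\mcl{L}=0$), so by the $G$-analogue of Lemma~\ref{lem-interior-dual}(iii) its membership in $\msc{MU}_G(d)$ is equivalent to $\ip{\Gamma,\id+\frac{\mcl{L}}{n}}\geq0$ for every unital, trace-preserving $\Gamma\in\msc{MU}_G(d)^\circ=\mcl{C}_G(d)^\circ$. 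To establish this I would fix $\epsilon>0$, set $\mcl{L}_\epsilon=\mcl{L}+\epsilon\delta_d$ and $r=\frac{\epsilon}{d^2}$, and invoke the preceding lemma to obtain $\eta>0$ with $\ip{\Gamma,\mcl{L}}\geq-r$ whenever $\Gamma\in\mcl{C}_G(d)^\circ\cap\msc{W}_1(d)$ satisfies $\ip{\Gamma,\id}<\eta$. Using $\ip{\Gamma,\delta_d}=\frac{1}{d^2}$ for every $\Gamma\in\msc{W}(d)$ and $\ip{\Gamma,\id}\geq0$ (as $\id=\mathrm{Ad}_I\in\mcl{C}_G(d)$), I would split into the two cases exactly as before: if $\ip{\Gamma,\id}<\eta$ then $\ip{\Gamma,\id+\frac{\mcl{L}_\epsilon}{n}}\geq\frac{1}{n}(\ip{\Gamma,\mcl{L}}+\epsilon\ip{\Gamma,\delta_d})\geq\frac{1}{n}(-\frac{\epsilon}{d^2}+\frac{\epsilon}{d^2})=0$, while if $\ip{\Gamma,\id}\geq\eta$ then choosing $n_0$ with $n_0\eta\geq\norm{\mcl{L}}_2$ gives $\ip{\Gamma,\id+\frac{\mcl{L}_\epsilon}{n}}\geq\eta-\frac{1}{n}\norm{\mcl{L}}_2\geq0$ for $n\geq n_0$. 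Letting $\epsilon\to0$ yields $\ip{\Gamma,\id+\frac{\mcl{L}}{n}}\geq0$ for all such $\Gamma$ and all $n\geq n_0$, which completes the argument.

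The obstacle I anticipate is not conceptual but a matter of aligning the dual cones correctly. The separation step requires control of $\ip{\Gamma,\id+\frac{\mcl{L}}{n}}$ for every $\Gamma\in\msc{MU}_G(d)^\circ=\mcl{C}_G(d)^\circ$, and this cone is \emph{larger} than $\mcl{C}(d)^\circ$; accordingly the $G$-version of Lemma~\ref{lem-mixed unitary-semigroup} must be read and used with $\mcl{C}_G(d)^\circ$ in place of $\mcl{C}(d)^\circ$. Its compactness-and-contradiction proof does support this reading, since any limit point $\Gamma$ of $\mcl{C}_G(d)^\circ\cap\msc{W}_1(d)$ with $\ip{\Gamma,\id}=0$ satisfies $\ip{\mathrm{Ad}_U,\Gamma}\geq0$ for all $U\in G$ and hence lies in $\msc{D}_G(d)$ by definition, so the hypothesis $\ip{\Gamma,\mcl{L}}\geq0$ applies and produces the required contradiction. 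Once this alignment is fixed, and with the observation that the subgroup property of $G$ is precisely what guarantees closure of $\msc{MU}_G(d)$ under composition, every estimate transfers verbatim from the $\mbb{U}(d)$ case.
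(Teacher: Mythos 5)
Your proposal is correct and is exactly the paper's approach: the paper proves this theorem by stating that it ``follows in the same lines of Theorem~\ref{thm-schonberg-mixed unitary}'', which is precisely the substitution argument you carry out, supported by the $G$-analogues of Lemma~\ref{lem-interior-dual} and Lemma~\ref{lem-mixed unitary-semigroup} that the paper states just before the theorem. Your observation that the dual cone must be read as $\mcl{C}_G(d)^{\circ}$ rather than $\mcl{C}(d)^{\circ}$ (the paper's statement of the $G$-analogue of Lemma~\ref{lem-mixed unitary-semigroup} literally writes $\mcl{C}(d)^{\circ}$, which is the smaller cone) is a genuine and worthwhile clarification, and your verification that the compactness-and-contradiction argument still goes through under this reading is exactly what is needed.
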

 \begin{proof}
     Follows in the same lines of Theorem \ref{thm-schonberg-mixed unitary}.
 \end{proof}
Note that if the group is finite, in part (ii) of this theorem it suffices to take the non-negative linear combinations of maps of the form $\{\mathrm{Ad}_U-\id:U\in G\}$ as there is no need to take the closure.

 Now we consider another important class of channels called Schur channels.
Recall that a CP map $\Phi:\M{d}\to\M{d}$ is said to be a \it{Schur channel} if there exists a $C\in\M{d}^+$ with unit diagonal entries such that
$$\Phi(X)=\mbb{S}_C(X):=C\circ X, $$ 
where $C\circ X$ is the Hadamard (or Schur)  product of $C$ and $X$. A positive matrix $C\in\M{d}^+$  with unit diagonal entries is referred as a \it{correlation matrix}. It is straightforward to verify that, a CP map $\Phi$ is a Schur channel if and only if it admits a Kraus representation with all Kraus operators being diagonal. Since the linear span of Kraus operators of a CP map is independent of the representation it follows that in
any Choi-Kraus representation of a Schur channel all Kraus operators are diagonal.

\begin{defn} A correlation matrix is said to be a {\em mixed rank-one\/} matrix if it is a convex combination of rank one correlation matrices.
\end{defn}

Observe that a rank one correlation matrix is of the form $B=[e^{i(b_j-b_k)}]_{1\leq j,k\leq d}$ for some real numbers $b_1, b_2, \ldots , b_d.$
In such a case, $\mbb{S}_B=\mathrm{Ad}_U$, where $U$ is the diagonal matrix with entries $e^{-ib_1}, e^{-ib_2}, \ldots , e^{-ib_d}.$
Moreover, it is well-known that  a Schur channel $\mbb{S}_C$ is mixed unitary if and only if $C$ is a mixed rank-one correlation matrix. (See the proof of Lemma 2.4 in \cite{BPS93} or Theorem 5 \cite{CoRa13}). Schur channels which are not of this form known to exist for $d\geq 4.$

The correlation matrices are of interest in quantum theory due to their connection with Bell's inequality and related concepts (\cite{BGP98}). They are of interest in quantum information theory as they provide very simple and natural examples of unital quantum channnels (\cite{Wat18}). 
There exists extensive literature on correlation matrices and associated quantum channels. Nevertheless the following result on eventual mixed rank-one property seems to be new.

\begin{thm}\label{thm-schur-even}
Let $C=[c_{jk}]_{1\leq j,k\leq d}$ be a correlation matrix. Then $\{ [c_{jk}^n]:n\in {\mathbb{N}}\}$ is eventually mixed rank-one, that is, there exists $K\in \mathbb{N}$ such that $[c_{jk}^n]$ is mixed  rank-one for all $n\geq K.$ Similarly, if $A=[a_{jk}]_{1\leq j,k\leq d}$ is a matrix such
that $[e^{ta_{jk}}]:t\geq 0\}$ are correlation matrices, then there exists $T\geq 0$ such that $[e^{ta_{jk}}]$ is mixed rank-one for all $t\geq T.$
\end{thm}
\begin{proof} The first part is an immediate consequence of our main Theorem \ref{thm-even-mu-discrete}, by considering the CP channel $\mbb{S}_C$ along with the observation made above that if a Schur channel of a matrix is mixed unitary then the matrix is a mixed rank-one matrix. The second part follows from
the one parameter semigroup version of the main theorem proved in  Theorem \ref{thm-semigp-eve-MU}.
\end{proof}

Now we ask when does a semigroup of Schur channels remain mixed unitary for all times? Since a CP map is a Schur channel if and only if it admits a Kraus representation consisting of diagonal operators, this question is equivalent to determining when a semigroup of unital channels is $G$-mixed unitary for all times, where $G$ denotes the group of diagonal unitary matrices.
%\Insert a theorem on eventuall %mixed unitarity. Corrections on references. Change the title of the section?
\begin{thm}\label{thm-schur-gene}
     Let $A=[a_{jk}]\in\M{d}.$  Consider the semigroup of linear maps  $\Phi_t=\mbb{S}_{C(t)}$, where $C(t)=[e^{ta_{jk}}]\in\M{d}$. Then the following are equivalent:
\begin{enumerate}
\item         $\Phi_t$ is mixed unitary for all $t\geq 0$;
\item $C(t)$ is mixed rank-one for all $t\geq 0$;
\item The matrix $A$ is in the closed cone generated by  $\{[e^{i(b_j-b_k)}-1]_{1\leq j,k\leq d}: b_j\in \mathbb{R},~\forall j\}$;
%$\{\ranko{u}{u}-J_d:u=(u_1,u_2,\cdots,u_d)\in\mbb{C}^d,\abs{u_j}=1\}$, where $J_d$ is the matrix with all entries are equal to 1.
\item $A$ is a positive linear combination of matrices of the following form: (i) \{$[i(h_j-h_k)]_{1\leq j,k\leq d} :h_j\in \mathbb{R}, ~\forall j\}$;
(ii) $\{ [-(a_j-a_k)^2]_{1\leq j,k\leq d}: a_j\in \mathbb{R},~\forall j\};$ (iii) $\{[e^{i(b_j-b_k)}-1]_{1\leq j,k\leq d}: b_j\in \mathbb{R},~\forall j\}$. \end{enumerate}
 \end{thm}
\begin{proof} We have already seen the equivalence of $(1)$ and $(2).$
    Let $\Phi_t=\mbb{S}_{C(t)}$ be the given semigroup of linear maps. Then its generator $\mcl{L}$ is given by $\mcl{L}=\mbb{S}_{A}$. Let $G$ be the group of diagonal unitary matrices. Observe that, for any $t\geq 0$, $\Phi_t$ is mixed unitary if and only if $\Phi_t$ is $G$-mixed unitary. Therefore, from Theorem \ref{thm-semi-G-MU}, $\Phi_t$ is mixed unitary for all times if and only if $\mcl{L}$ is in the closed convex cone generated by  $\{\mathrm{Ad}_U-\id:U\in G\}$. Since $U$ is diagonal the map $\mathrm{Ad}_U$ is a Schur channel of the form  $\mbb{S}_{[e^{i(b_j-b_k)}]}$. 
    As the map $\M{d}\ni B\mapsto \mbb{S}_B$ is linear, $(3)$ follows. Now from \cite{KuMa87} or Theorem \ref{thm-mu-semi-gene}, the generator of the semigroup has the form
    \begin{equation*}
            \mbb{S}_C(X)=i[H,X]+\frac{1}{2}\sum_{j=1}^{n}\lambda_j(2L_j^*XL_j-L_j^*L_jX-XL_j^*L_j),
        \end{equation*}  with self-adjoint $H$ and either self-adjoint or unitary $L_j's$ and $\lambda _j\geq 0.$ Since as Schur property forces $H$, $L_j$'s to be diagonal we get $(4)$. The implication $(4)$ implies $(1)$ is clear from Theorem \ref{thm-mu-semi-gene}.
        % \cite{KuMa87}.
%    it follows that $\mbb{S}_C$ is in the closed convex cone generated by  $\{\mathrm{Ad}_U-\id:U\in G\}$ if and only if $B$ is in the closed cone generated by $\{\ranko{u}{u}-J_d:u=(u_1,u_2,\cdots,u_d)\in\mbb{C}^d,\abs{u_j}=1\}$, where $J_d$ is the matrix with all entries are equal to 1.
\end{proof}

\section*{Acknowledgements}
Bhat gratefully acknowledges funding from ANRF (India) through J C Bose Fellowship No. JBR/2021/000024. Devendra acknowledges funding from the Indian Institute of Technology Bombay through the Institute Postdoctoral Fellowship and also thanks the Indian Statistical Institute Bangalore for its kind hospitality during his visits. 

%\textcolor{red}{Temporarily included:
%Consider the continuous time case with $\Phi _t=e^{t\mcl{L}}$. Suppose $\mcl{L}(X_0)=iaX_0$ with $a\in \mathbb{R}$. Then $\Phi _t(X_0)=e^{ita}X_0 $. Since we have assumed that $\Phi _t$ is trace preserving, by Kadison, $\Phi _t(X_0^*X_0)=X_0^*X_0$ and $\Phi _t(X_0X_0^*)=X_0X_0^*$. In other words $X_0$ is in the decoherence algebra by definition. Then by the argument on page 134 by Fagnola and others, $[L_j, X_0]=0=[L_j^*, x_0]$. Consequently, if we write $\mcl{L}(X)= i[H,X]+\mcl{L}_1(X)$, we get $\mcl{L}_1(X_0)=0$ and $i[H, X_0]=iaX_0.$ Then by the power series expansion of $\Phi _t(X_0)= e^{itH}X_0e^{-itH}= e^{ita}X_0.$  It follows that on the peripheral space the action of $\Phi _t$ is same as the automorphism semigroup generated by $H.$}
\section*{Data availability}
The manuscript has no associated data.
\section*{Declarations}
\subsection*{Conflict of interest} The authors have no conflict of interest.
\printbibliography

@article {Cho75,
    AUTHOR = {Choi, Man Duen},
     TITLE = {Completely positive linear maps on complex matrices},
   JOURNAL = {Linear Algebra Appl.},
  FJOURNAL = {Linear Algebra and its Applications},
    VOLUME = {10},
      YEAR = {1975},
     PAGES = {285--290},
      ISSN = {0024-3795},
   MRCLASS = {15A60 (46L05)},
  MRNUMBER = {376726},
MRREVIEWER = {E. St\o rmer},
       DOI = {10.1016/0024-3795(75)90075-0},
       URL = {https://doi.org/10.1016/0024-3795(75)90075-0},
}

@inproceedings {DFR,
    AUTHOR = {Ameur Dhahri, and Franco Fagnola, and Rolando Rebolledo},
     TITLE = {The decoherence-free subalgebra of a Quantum Markov semigroup on $\mcl{B}(\mcl{H})$},
 BOOKTITLE = {Quantum Probability and Related Topics,
Proceedings of the 30th Conference, Santiago, Chile, 23 – 28 November 2009},
     PAGES = {131-147},
 PUBLISHER = {World Scientific, Singapore},
      YEAR = {2011},
      ISBN = {},
   MRCLASS = {46L57 (46N50 81S25)},
  MRNUMBER = {2799120},
MRREVIEWER = {},
       DOI = {10.1142/9789814338745\_0007},
       URL = {https://doi.org/10.1142/9789814338745_0007},
}

@article {FR,
    AUTHOR = {Fagnola, Franco and Rebolledo, Rolando},
     TITLE = {Algebraic conditions for convergence of a quantum {M}arkov
              semigroup to a steady state},
   JOURNAL = {Infin. Dimens. Anal. Quantum Probab. Relat. Top.},
  FJOURNAL = {Infinite Dimensional Analysis, Quantum Probability and Related
              Topics},
    VOLUME = {11},
      YEAR = {2008},
    NUMBER = {3},
     PAGES = {467--474},
      ISSN = {0219-0257,1793-6306},
   MRCLASS = {46L55 (47D07 47N50 81S25 82C20)},
  MRNUMBER = {2446520},
       DOI = {10.1142/S0219025708003142},
       URL = {https://doi.org/10.1142/S0219025708003142},
}

@article {Jam72,
    AUTHOR = {Jamio\l kowski, A.},
     TITLE = {Linear transformations which preserve trace and positive
              semidefiniteness of operators},
   JOURNAL = {Rep. Mathematical Phys.},
  FJOURNAL = {Reports on Mathematical Physics},
    VOLUME = {3},
      YEAR = {1972},
    NUMBER = {4},
     PAGES = {275--278},
      ISSN = {0034-4877},
   MRCLASS = {15A48 (47B99)},
  MRNUMBER = {342537},
MRREVIEWER = {S. Newberger},
       DOI = {10.1016/0034-4877(72)90011-0},
       URL = {https://doi.org/10.1016/0034-4877(72)90011-0},
}

@article {Kra71,
    AUTHOR = {Kraus, K.},
     TITLE = {General state changes in quantum theory},
   JOURNAL = {Ann. Physics},
  FJOURNAL = {Annals of Physics},
    VOLUME = {64},
      YEAR = {1971},
     PAGES = {311--335},
      ISSN = {0003-4916},
   MRCLASS = {81.47},
  MRNUMBER = {292434},
MRREVIEWER = {I. K. Marek},
       DOI = {10.1016/0003-4916(71)90108-4},
       URL = {https://doi.org/10.1016/0003-4916(71)90108-4},
}

@article {BPS93,
    AUTHOR = {Bhat, Rajarama and Pati, V. and Sunder, V. S.},
     TITLE = {On some convex sets and their extreme points},
   JOURNAL = {Math. Ann.},
  FJOURNAL = {Mathematische Annalen},
    VOLUME = {296},
      YEAR = {1993},
    NUMBER = {4},
     PAGES = {637--648},
      ISSN = {0025-5831,1432-1807},
   MRCLASS = {46L10 (46A55 46L37)},
  MRNUMBER = {1233488},
MRREVIEWER = {Yasuyuki\ Kawahigashi},
       DOI = {10.1007/BF01445126},
       URL = {https://doi.org/10.1007/BF01445126},
}

@article {CP17,
    AUTHOR = {Chru\'sci\'nski, Dariusz and Pascazio, Saverio},
     TITLE = {A brief history of the {GKLS} equation},
   JOURNAL = {Open Syst. Inf. Dyn.},
  FJOURNAL = {Open Systems \& Information Dynamics},
    VOLUME = {24},
      YEAR = {2017},
    NUMBER = {3},
     PAGES = {1740001, 20},
      ISSN = {1230-1612,1793-7191},
   MRCLASS = {81P10},
  MRNUMBER = {3713631},
MRREVIEWER = {M.\ J.\ M\polhk aczy\'nski},
       DOI = {10.1142/S1230161217400017},
       URL = {https://doi.org/10.1142/S1230161217400017},
}

@book {KRP,
    AUTHOR = {Parthasarathy, K. R.},
     TITLE = {An introduction to quantum stochastic calculus},
    SERIES = {Monographs in Mathematics},
    VOLUME = {85},
 PUBLISHER = {Birkh\"auser Verlag, Basel},
      YEAR = {1992},
     PAGES = {xii+290},
      ISBN = {3-7643-2697-2},
   MRCLASS = {81S25 (46L50 47D06 60H05 81-02)},
  MRNUMBER = {1164866},
MRREVIEWER = {David\ Applebaum},
       DOI = {10.1007/978-3-0348-8641-3},
       URL = {https://doi.org/10.1007/978-3-0348-8641-3},
}

@article{BCGPY13,
    AUTHOR = {Burgarth, D. and Chiribella, G. and Giovannetti, V. and
              Perinotti, P. and Yuasa, K.},
     TITLE = {Ergodic and mixing quantum channels in finite dimensions},
   JOURNAL = {New J. Phys.},
  FJOURNAL = {New Journal of Physics},
    VOLUME = {15},
      YEAR = {2013},
     PAGES = {073045, 33},
      ISSN = {1367-2630},
   MRCLASS = {81P45 (81S22)},
  MRNUMBER = {3094122},
       DOI = {10.1088/1367-2630/15/7/073045},
       URL = {https://doi.org/10.1088/1367-2630/15/7/073045},
}

@article{MeWolf09,
    AUTHOR = {Mendl, Christian B. and Wolf, Michael M.},
     TITLE = {Unital quantum channels---convex structure and revivals of
              {B}irkhoff's theorem},
   JOURNAL = {Comm. Math. Phys.},
  FJOURNAL = {Communications in Mathematical Physics},
    VOLUME = {289},
      YEAR = {2009},
    NUMBER = {3},
     PAGES = {1057--1086},
      ISSN = {0010-3616,1432-0916},
   MRCLASS = {81P45 (81R15)},
  MRNUMBER = {2511660},
       DOI = {10.1007/s00220-009-0824-2},
       URL = {https://doi.org/10.1007/s00220-009-0824-2},
}

@article{HaaMu11,
    AUTHOR = {Haagerup, Uffe and Musat, Magdalena},
     TITLE = {Factorization and dilation problems for completely positive
              maps on von {N}eumann algebras},
   JOURNAL = {Comm. Math. Phys.},
  FJOURNAL = {Communications in Mathematical Physics},
    VOLUME = {303},
      YEAR = {2011},
    NUMBER = {2},
     PAGES = {555--594},
      ISSN = {0010-3616,1432-0916},
   MRCLASS = {46L10},
  MRNUMBER = {2782624},
MRREVIEWER = {Takashi\ Itoh},
       DOI = {10.1007/s00220-011-1216-y},
       URL = {https://doi.org/10.1007/s00220-011-1216-y},
}

@book{Wat18,
  title={The theory of quantum information},
  author={Watrous, John},
  year={2018},
  publisher={Cambridge University Press}
}

@article {AFK25,
    AUTHOR = {Daniele Amato, and Paolo Facchi, and Arturo Konderak},
     TITLE = {Attractor Subspace and Decoherence-Free Algebra of Quantum Dynamics},
   JOURNAL = {arXiv},
  FJOURNAL = {arXiv},
    VOLUME = {},
      YEAR = {2025},
    NUMBER = {},
     PAGES = {},
      ISSN = {},
   MRCLASS = {},
  MRNUMBER = {},
       DOI = {},
       URL = {https://arxiv.org/abs/2511.18021},
}

@article{BD25,
title = {Peripheral Poisson Boundary: Extensions and Examples},
journal = {Arxiv},
volume = {2507.20668},
pages = {},
year = {2025},
issn = {},
doi = {},
url = {https://arxiv.org/abs/2507.20668},
author = {B. V. Rajarama Bhat and Astrid Swizell Dias}
}

@article{Ja17,
    AUTHOR = {Jain, Tanvi},
     TITLE = {Hadamard powers of some positive matrices},
   JOURNAL = {Linear Algebra Appl.},
  FJOURNAL = {Linear Algebra and its Applications},
    VOLUME = {528},
      YEAR = {2017},
     PAGES = {147--158},
      ISSN = {0024-3795,1873-1856},
   MRCLASS = {15B48 (15A45)},
  MRNUMBER = {3652842},
MRREVIEWER = {Jaspal\ Singh\ Aujla},
       DOI = {10.1016/j.laa.2016.06.030},
       URL = {https://doi.org/10.1016/j.laa.2016.06.030},
}

@article{BGP98,
    AUTHOR = {Balasubramanian, K. and Gupta, J. C. and Parthasarathy, K. R.},
     TITLE = {Remarks on {B}ell's inequality for spin correlations},
   JOURNAL = {Sankhy\=a{} Ser. A},
  FJOURNAL = {Sankhy\=a. The Indian Journal of Statistics. Series A},
    VOLUME = {60},
      YEAR = {1998},
    NUMBER = {1},
     PAGES = {29--35},
      ISSN = {0581-572X},
   MRCLASS = {82B10 (60E15 81P10 81S25)},
  MRNUMBER = {1714776},
MRREVIEWER = {Emilio\ Santos},
}

@article{LaSt93,
title = {On Birkhoff's theorem for doubly stochastic completely positive maps of matrix algebras},
journal = {Linear Algebra and its Applications},
volume = {193},
pages = {107-127},
year = {1993},
issn = {0024-3795},
doi = {https://doi.org/10.1016/0024-3795(93)90274-R},
url = {https://www.sciencedirect.com/science/article/pii/002437959390274R},
author = {L.J. Landau and R.F. Streater}
}

@article {Tre86,
    AUTHOR = {Tregub, S. L.},
     TITLE = {Bistochastic operators on finite-dimensional von {N}eumann
              algebras},
   JOURNAL = {Izv. Vyssh. Uchebn. Zaved. Mat.},
  FJOURNAL = {Izvestiya Vysshikh Uchebnykh Zavedeni\u i. Matematika},
      YEAR = {1986},
    NUMBER = {3},
     PAGES = {75--77, 88},
      ISSN = {0021-3446},
   MRCLASS = {46L50 (15A51)},
  MRNUMBER = {853653},
MRREVIEWER = {J\'ozsef\ Sz\H ucs},
}

@article {Ume53,
    AUTHOR = {Umegaki, Hisaharu},
     TITLE = {Conditional expectation in an operator algebra},
   JOURNAL = {Tohoku Math. J. (2)},
  FJOURNAL = {The Tohoku Mathematical Journal. Second Series},
    VOLUME = {6},
      YEAR = {1954},
     PAGES = {177--181},
      ISSN = {0040-8735,2186-585X},
   MRCLASS = {46.0X},
  MRNUMBER = {68751},
MRREVIEWER = {J.\ Dixmier},
       DOI = {10.2748/tmj/1178245177},
       URL = {https://doi.org/10.2748/tmj/1178245177},
}

@article {KuMa87,
    AUTHOR = {K\"ummerer, Burkhard and Maassen, Hans},
     TITLE = {The essentially commutative dilations of dynamical semigroups
              on {$M_n$}},
   JOURNAL = {Comm. Math. Phys.},
  FJOURNAL = {Communications in Mathematical Physics},
    VOLUME = {109},
      YEAR = {1987},
    NUMBER = {1},
     PAGES = {1--22},
      ISSN = {0010-3616,1432-0916},
   MRCLASS = {46L55 (47D07 60J35 82A15)},
  MRNUMBER = {879030},
MRREVIEWER = {Robert\ Alicki},
       URL = {http://projecteuclid.org/euclid.cmp/1104116710},
}

@article {Wat09,
    AUTHOR = {Watrous, John},
     TITLE = {Mixing doubly stochastic quantum channels with the completely
              depolarizing channel},
   JOURNAL = {Quantum Inf. Comput.},
  FJOURNAL = {Quantum Information \& Computation},
    VOLUME = {9},
      YEAR = {2009},
    NUMBER = {5-6},
     PAGES = {406--413},
      ISSN = {1533-7146},
DOI={10.26421/QIC9.5-6-4},
   MRCLASS = {81P45},
  MRNUMBER = {2548824},
MRREVIEWER = {Jon-Lark\ Kim},
}

@article {HRS20,
    AUTHOR = {Hanson, Eric P. and Rouz\'e, Cambyse and Stilck Fran\c ca,
              Daniel},
     TITLE = {Eventually entanglement breaking {M}arkovian dynamics:
              structure and characteristic times},
   JOURNAL = {Ann. Henri Poincar\'e},
  FJOURNAL = {Annales Henri Poincar\'e. A Journal of Theoretical and
              Mathematical Physics},
    VOLUME = {21},
      YEAR = {2020},
    NUMBER = {5},
     PAGES = {1517--1571},
      ISSN = {1424-0637,1424-0661},
   MRCLASS = {81P40 (47D07 47N50 81P47)},
  MRNUMBER = {4087370},
MRREVIEWER = {Shao-Ming\ Fei},
       DOI = {10.1007/s00023-020-00906-4},
       URL = {https://doi.org/10.1007/s00023-020-00906-4},
}

@article {Ra06,
    AUTHOR = {Pereira, Rajesh},
     TITLE = {Representing conditional expectations as elementary operators},
   JOURNAL = {Proc. Amer. Math. Soc.},
  FJOURNAL = {Proceedings of the American Mathematical Society},
    VOLUME = {134},
      YEAR = {2006},
    NUMBER = {1},
     PAGES = {253--258},
      ISSN = {0002-9939,1088-6826},
   MRCLASS = {46L05 (47B47)},
  MRNUMBER = {2170565},
MRREVIEWER = {Stanis\l aw\ Goldstein},
       DOI = {10.1090/S0002-9939-05-08031-7},
       URL = {https://doi.org/10.1090/S0002-9939-05-08031-7},
}

@article {KJRM23,
    AUTHOR = {Kribs, David W. and Levick, Jeremy and Pereira, Rajesh and
              Rahaman, Mizanur},
     TITLE = {Operator algebra generalization of a theorem of {W}atrous and
              mixed unitary quantum channels},
   JOURNAL = {J. Phys. A},
  FJOURNAL = {Journal of Physics. A. Mathematical and Theoretical},
    VOLUME = {57},
      YEAR = {2024},
    NUMBER = {11},
     PAGES = {Paper No. 115303, 25},
      ISSN = {1751-8113,1751-8121},
   MRCLASS = {47N50 (81P47 81R15)},
  MRNUMBER = {4719025},
MRREVIEWER = {Zheng-Li\ Chen},
       DOI = {10.1088/1751-8121/ad2cb0},
       URL = {https://doi.org/10.1088/1751-8121/ad2cb0},
}

@article{BhSa23,
title = {Peripherally automorphic unital completely positive maps},
journal = {Linear Algebra and its Applications},
volume = {678},
pages = {191-205},
year = {2023},
issn = {0024-3795},
doi = {https://doi.org/10.1016/j.laa.2023.08.020},
url = {https://www.sciencedirect.com/science/article/pii/S0024379523003257},
author = { B.V. Rajarama Bhat and Samir Kar and Bharat Talwar},
keywords = {Multiplicative domain, Peripheral spectrum, Unital completely positive maps},
abstract = {We identify and characterize unital completely positive (UCP) maps on finite dimensional C⁎-algebras for which the Choi-Effros product extended to the space generated by peripheral eigenvectors matches with the original product. We analyze a decomposition of general UCP maps in finite dimensions into persistent and transient parts. It is shown that UCP maps on finite dimensional C⁎-algebras with spectrum contained in the unit circle are ⁎-automorphisms.}
}

@article {Ros08,
    AUTHOR = {Rosgen, Bill},
     TITLE = {Additivity and distinguishability of random unitary channels},
   JOURNAL = {J. Math. Phys.},
  FJOURNAL = {Journal of Mathematical Physics},
    VOLUME = {49},
      YEAR = {2008},
    NUMBER = {10},
     PAGES = {102107, 16},
      ISSN = {0022-2488,1089-7658},
   MRCLASS = {81P45 (81P15 94A40)},
  MRNUMBER = {2464603},
MRREVIEWER = {Grigori\ G.\ Amosov},
       DOI = {10.1063/1.2992977},
       URL = {https://doi.org/10.1063/1.2992977},
}

@book {AU82,
    AUTHOR = {Alberti, Peter M. and Uhlmann, Armin},
     TITLE = {Stochasticity and partial order},
    SERIES = {Mathematics and its Applications},
    VOLUME = {9},
      NOTE = {Doubly stochastic maps and unitary mixing,
              With a preface by Michiel Hazewinkel},
 PUBLISHER = {D. Reidel Publishing Co., Dordrecht-Boston, Mass.},
      YEAR = {1982},
     PAGES = {123},
      ISBN = {90-277-1350-2},
   MRCLASS = {46Lxx (82A15)},
  MRNUMBER = {667518},
MRREVIEWER = {P. Kruszy\'nski},
}

@incollection {AMTB00,
    AUTHOR = {Ambainis, Andris and Mosca, Michele and Tapp, Alain and de
              Wolf, Ronald},
     TITLE = {Private quantum channels},
 BOOKTITLE = {41st {A}nnual {S}ymposium on {F}oundations of {C}omputer
              {S}cience ({R}edondo {B}each, {CA}, 2000)},
     PAGES = {547--553},
 PUBLISHER = {IEEE Comput. Soc. Press, Los Alamitos, CA},
      YEAR = {2000},
      ISBN = {0-7695-0850-2},
   MRCLASS = {81P68 (94A40)},
  MRNUMBER = {1931851},
       DOI = {10.1109/SFCS.2000.892142},
       URL = {https://doi.org/10.1109/SFCS.2000.892142},
}

@InProceedings{AS04,
author="Ambainis, Andris
and Smith, Adam",
editor="Jansen, Klaus
and Khanna, Sanjeev
and Rolim, Jos{\'e} D. P.
and Ron, Dana",
title="Small pseudo-random families of matrices: Derandomizing approximate quantum encryption",
booktitle="Approximation, Randomization, and Combinatorial Optimization. Algorithms and Techniques",
year="2004",
publisher="Springer Berlin Heidelberg",
address="Berlin, Heidelberg",
pages="249--260",
abstract="A quantum encryption scheme (also called private quantum channel, or state randomization protocol) is a one-time pad for quantum messages. If two parties share a classical random string, one of them can transmit a quantum state to the other so that an eavesdropper gets little or no information about the state being transmitted. Perfect encryption schemes leak no information at all about the message. Approximate encryption schemes leak a non-zero (though small) amount of information but require a shorter shared random key. Approximate schemes with short keys have been shown to have a number of applications in quantum cryptography and information theory [8].",
isbn="978-3-540-27821-4"
}

@article {HDPW04,
    AUTHOR = {Hayden, Patrick and Leung, Debbie and Shor, Peter W. and
              Winter, Andreas},
     TITLE = {Randomizing quantum states: constructions and applications},
   JOURNAL = {Comm. Math. Phys.},
  FJOURNAL = {Communications in Mathematical Physics},
    VOLUME = {250},
      YEAR = {2004},
    NUMBER = {2},
     PAGES = {371--391},
      ISSN = {0010-3616,1432-0916},
   MRCLASS = {81P68 (81P15 94A60)},
  MRNUMBER = {2094521},
MRREVIEWER = {Ad\'an\ Cabello},
       DOI = {10.1007/s00220-004-1087-6},
       URL = {https://doi.org/10.1007/s00220-004-1087-6},
}

@article{LWat20,
  doi = {10.22331/q-2020-04-16-253},
  url = {https://doi.org/10.22331/q-2020-04-16-253},
  title = {Detecting mixed-unitary quantum channels is {NP}-hard},
  author = {Lee, Colin Do-Yan and Watrous, John},
  journal = {{Quantum}},
  issn = {2521-327X},
  publisher = {{Verein zur F{\"{o}}rderung des Open Access Publizierens in den Quantenwissenschaften}},
  volume = {4},
  pages = {253},
  month = apr,
  year = {2020}
}

@article {Kri03,
    AUTHOR = {Kribs, David W.},
     TITLE = {Quantum channels, wavelets, dilations and representations of
              {$\msc{O}_n$}},
   JOURNAL = {Proc. Edinb. Math. Soc. (2)},
  FJOURNAL = {Proceedings of the Edinburgh Mathematical Society. Series II},
    VOLUME = {46},
      YEAR = {2003},
    NUMBER = {2},
     PAGES = {421--433},
      ISSN = {0013-0915,1464-3839},
   MRCLASS = {42C40 (46L60 47A20 81P68 94A12)},
  MRNUMBER = {1998572},
       DOI = {10.1017/S0013091501000980},
       URL = {https://doi.org/10.1017/S0013091501000980},
}

@article{Bus06,
title = {On the minimum number of unitaries needed to describe a random-unitary channel},
journal = {Physics Letters A},
volume = {360},
number = {2},
pages = {256-258},
year = {2006},
issn = {0375-9601},
doi = {https://doi.org/10.1016/j.physleta.2006.08.038},
url = {https://www.sciencedirect.com/science/article/pii/S0375960106012898},
author = {Francesco Buscemi},
abstract = {We provide, in an extremely simple way, an upper bound to the minimum number of unitary operators describing a general random-unitary channel.}
}

@article {CKLi22,
    AUTHOR = {Girard, Mark and Leung, Debbie and Levick, Jeremy and Li,
              Chi-Kwong and Paulsen, Vern and Poon, Yiu Tung and Watrous,
              John},
     TITLE = {On the mixed-unitary rank of quantum channels},
   JOURNAL = {Comm. Math. Phys.},
  FJOURNAL = {Communications in Mathematical Physics},
    VOLUME = {394},
      YEAR = {2022},
    NUMBER = {2},
     PAGES = {919--951},
      ISSN = {0010-3616,1432-0916},
   MRCLASS = {81P47 (46L07 81P45)},
  MRNUMBER = {4469410},
MRREVIEWER = {Fumio\ Hiai},
       DOI = {10.1007/s00220-022-04412-y},
       URL = {https://doi.org/10.1007/s00220-022-04412-y},
}

@article {BhBha25,
    AUTHOR = {Bakshi, Keshab Chandra and Bhat, B. V. Rajarama},
     TITLE = {Unitary orthonormal bases of finite dimensional inclusions},
   JOURNAL = {Int. Math. Res. Not. IMRN},
  FJOURNAL = {International Mathematics Research Notices. IMRN},
      YEAR = {2025},
    NUMBER = {16},
     PAGES = {Paper No. rnaf243, 22},
      ISSN = {1073-7928,1687-0247},
   MRCLASS = {46L37 (46L10)},
  MRNUMBER = {4945126},
       DOI = {10.1093/imrn/rnaf243},
       URL = {https://doi.org/10.1093/imrn/rnaf243},
}

@article {GKS76,
    AUTHOR = {Gorini, Vittorio and Kossakowski, Andrzej and Sudarshan, E. C.
              G.},
     TITLE = {Completely positive dynamical semigroups of {$N$}-level
              systems},
   JOURNAL = {J. Mathematical Phys.},
  FJOURNAL = {Journal of Mathematical Physics},
    VOLUME = {17},
      YEAR = {1976},
    NUMBER = {5},
     PAGES = {821--825},
      ISSN = {0022-2488,1089-7658},
   MRCLASS = {81.47},
  MRNUMBER = {406206},
MRREVIEWER = {Panayotis\ Tsilimigras},
       DOI = {10.1063/1.522979},
       URL = {https://doi.org/10.1063/1.522979},
}

@article {Lin76,
    AUTHOR = {Lindblad, G.},
     TITLE = {On the generators of quantum dynamical semigroups},
   JOURNAL = {Comm. Math. Phys.},
  FJOURNAL = {Communications in Mathematical Physics},
    VOLUME = {48},
      YEAR = {1976},
    NUMBER = {2},
     PAGES = {119--130},
      ISSN = {0010-3616,1432-0916},
   MRCLASS = {81.47},
  MRNUMBER = {413878},
MRREVIEWER = {R.\ G.\ Newton},
       URL = {http://projecteuclid.org/euclid.cmp/1103899849},
}

@article{VsSu15,
    AUTHOR = {Sumesh, K. and Sunder, V. S.},
     TITLE = {On a tensor-analogue of the {S}chur product},
   JOURNAL = {Positivity},
  FJOURNAL = {Positivity. An International Mathematics Journal Devoted to
              Theory and Applications of Positivity},
    VOLUME = {20},
      YEAR = {2016},
    NUMBER = {3},
     PAGES = {621--624},
      ISSN = {1385-1292,1572-9281},
   MRCLASS = {46L06 (47B65 47L10)},
  MRNUMBER = {3540515},
MRREVIEWER = {W.\ \.Zelazko},
       DOI = {10.1007/s11117-015-0377-x},
       URL = {https://doi.org/10.1007/s11117-015-0377-x},
}

@article {CoRa13,
    AUTHOR = {O'Meara, Corey and Pereira, Rajesh},
     TITLE = {Self-dual maps and symmetric bistochastic matrices},
   JOURNAL = {Linear Multilinear Algebra},
  FJOURNAL = {Linear and Multilinear Algebra},
    VOLUME = {61},
      YEAR = {2013},
    NUMBER = {1},
     PAGES = {23--34},
      ISSN = {0308-1087,1563-5139},
   MRCLASS = {15B51},
  MRNUMBER = {3003039},
MRREVIEWER = {Mahmud\ Akelbek},
       DOI = {10.1080/03081087.2012.661427},
       URL = {https://doi.org/10.1080/03081087.2012.661427},
}
\end{document}